\title[Extremal domains and P\'{o}lya-type inequalities]{Extremal domains and P\'{o}lya-type inequalities for the 
Robin Laplacian on rectangles and unions of rectangles}
\author{Pedro Freitas}
\author{James Kennedy}
\address{Departamento de Matem\'atica, Instituto Superior T\'ecnico, Universidade de Lisboa, Av. Rovisco Pais 1,
P-1049-001 Lisboa, Portugal {\rm and}
Grupo de F\'isica M\'atematica, Faculdade de Ci\^encias, Universidade de Lisboa,
Campo Grande, Edif\'icio C6, P-1749-016 Lisboa, Portugal}
\email{psfreitas@fc.ul.pt}
\address{Grupo de F\'isica M\'atematica, Faculdade de Ci\^encias, Universidade de Lisboa,
Campo Grande, Edif\'icio C6, P-1749-016 Lisboa, Portugal}
\email{jbkennedy@fc.ul.pt}
\newtheorem{theorem}{Theorem}[section]
\newtheorem{lemma}[theorem]{Lemma}
\newtheorem{proposition}[theorem]{Proposition}
\newtheorem{corollary}[theorem]{Corollary}
\newtheorem{conjecture}[theorem]{Conjecture}
\newtheorem*{conjecture*}{Conjecture}
\newtheorem{thmx}{Theorem}
\newtheorem{corx}[thmx]{Corollary}
\theoremstyle{remark}
\newtheorem{remark}[theorem]{Remark}
\newtheorem{definition}[theorem]{Definition}
\numberwithin{equation}{section}
\numberwithin{figure}{section}
\newcommand{\R}{\mathbb{R}}
\newcommand{\N}{\mathbb{N}}
\newcommand\soutb{\bgroup\markoverwith{\textcolor{blue}{\rule[.5ex]{2pt}{1pt}}}\ULon}
\newcommand\soutr{\bgroup\markoverwith{\textcolor{red}{\rule[.5ex]{2pt}{1pt}}}\ULon}
\newcommand{\bo}{{\rm O}}
\newcommand{\so}{{\rm o}}
\newcommand{\ds}{\displaystyle}
\newcommand{\dint}{\ds\int}
\newcommand{\eqskip}{ \vspace*{2mm}\\ }
\newcommand{\fr}[2]{\frac{\ds #1}{\ds #2}}
\newcommand{\keig}[3]{{\lambda_{#1}}(#2,#3)} 
\newcommand{\keigd}[2]{{\lambda_{#1}}(#2,\infty)} 
\newcommand{\keign}[2]{{\lambda_{#1}}(#2,0)} 
\newcommand{\eigcount}[3]{N_{#1,#2}(#3)} 
\newcommand{\unionsquare}[1]{\mathcal{U}_{#1}} 
\newcommand{\rect}[2]{\mathcal{R}_{#2}(#1)} 
\newcommand{\sq}[1]{\mathcal{S}_{#1}} 
\newcommand{\interv}[1]{\mathcal{I}_{#1}} 
\newcommand{\keigopt}[3]{\lambda_{#1}^{\rm +} (#2,#3)} 
\newcommand{\keigoptrect}[3]{\lambda_{#1}^{\rm *} (#2,#3)} 
\newcommand{\eigmode}[4]{\lambda_{(#1,#2)}(#3,#4)} 
\newcommand{\eigmodeoptrect}[4]{\lambda_{(#1,#2)}^{\rm rect} (#3,#4)} 
\newcommand{\sumeigopt}[3]{\sigma_{#1}^{\rm +} (#2,#3)} 
\newcommand{\unioncube}[1]{\mathcal{Q}_{#1}} 
\newcommand{\unionball}[1]{\mathcal{B}_{#1}} 
\newcommand{\family}{\mathcal{A}} 
\newcommand{\secfamily}{\mathcal{B}} 
\begin{document}

\begin{abstract}
We show that eigenvalues of the Robin Laplacian with a positive boundary parameter $\alpha$ on rectangles and unions of rectangtes satisfy
P\'{o}lya-type inequalities, albeit with an exponent smaller than that of the corresponding Weyl asympotics for a fixed domain.
We determine the optimal exponents in either case, showing that they are different in the two situations. Our approach to proving these
results includes a characterisation of the corresponding extremal domains for the $k^{\rm th}$ eigenvalue in regions of the $(k,\alpha)-$plane.
\end{abstract}

\thanks{\emph{Mathematics Subject Classification} (2010). 35P15 (35J05 35J25 49R05)}

\thanks{\emph{Key words and phrases}. Laplacian, Robin boundary conditions, eigenvalues, P\'olya's conjecture}

\thanks{The work of the authors was supported by the Funda{\c{c}}{\~a}o para a Ci{\^e}ncia e a Tecnologia, Portugal, via the program ``Investigador FCT'', reference
IF/01461/2015 (JK), and project PTDC/MAT-CAL/4334/2014 (PF and JK)}


\maketitle

\section{Introduction}
\label{sec:intro}

Given a planar domain $\Omega$ with a sufficiently smooth boundary $\partial\Omega$, consider the equation
\begin{equation}\label{eigeq}
 \Delta u + \tau u = 0 \mbox{ in } \Omega
\end{equation}
with one of the following boundary conditions
\[
 \begin{array}{lll}
  u = 0, & x \in \partial\Omega & \mbox{(Dirichlet)}\eqskip
  \fr{\partial u}{\partial \nu} = 0, &  x \in \partial\Omega & \mbox{(Neumann})
 \end{array},
\]
where $\nu$ is the outer unit normal defined on $\partial\Omega$. Denoting by $\gamma_{k}$ and $\mu_{k}$ the Dirichlet and
Neumann eigenvalues, respectively, corresponding to the numbers $\tau$ for which nontrivial solutions $u$ of the above equation exist, we have
\[
0 < \gamma_{1}\leq \gamma_{2} \leq \dots
\]
and
\[
 0 = \mu_{1}\leq \mu_{2} \leq \dots,
\]
with both sequences being unbounded.

In the second volume of his book {\it Mathematics and plausible reasoning}, P\'{o}lya conjectured that
\begin{equation}\label{originpolyaconj}
  \mu_{k} < \fr{4k\pi}{A} < \gamma_{k}, \; k=1,2,\dots
\end{equation}
for planar domains with area $A$~\cite[pp.~51--53]{poly1}. After stating this conjecture, P\'{o}lya went on to say that these inequalities are satisfied in the
case of rectangles and that it had been this particular case that had suggested the conjecture.

A few years later, P\'{o}lya himself provided a remarkably simple and elegant argument to prove~\eqref{originpolyaconj} in the Dirichlet case for
plane-covering (tiling) domains, that is, domains which ``cover the whole plane without gaps and without overlapping''~\cite{poly2}. In the same article, P\'{o}lya
also provided a shaper version of the Neumann part of the conjecture by replacing $\mu_{k}$ with $\mu_{k+1}$, which he then proved
for a smaller class of domains, with the general result for tiling domains being obtained not long afterwards by Kellner \cite{kell}.

In~\cite{poly1} P\'{o}lya gave some further heuristic arguments as to why conjecture~\eqref{originpolyaconj} should be true in general, such as the fact
that it holds for the first two eigenvalues of general domains, which follows from the Faber--Krahn and Hong--Krahn--Szego inequalities, and the Szeg\H{o}--Weinberger
inequality in the Dirichlet and Neumann cases, respectively. He also mentioned that, upon division by $k$, all three terms in~\eqref{originpolyaconj} have the
same limit, as a consequence of the Weyl asymptotics. However, he left out one of the most compelling pieces of evidence for~\eqref{originpolyaconj} to
hold, probably because this was itself a conjecture at the time, namely the two-term Weyl asymptotics
\[
 \tau_{k} = \fr{4k\pi}{A} \pm 2\sqrt{k \pi} \fr{L}{A^{3/2}} + \so\left(k^{1/2}\right)
\]
where $L$ denotes the perimeter of $\Omega$ and the $+$ and $-$ signs correspond to Dirichlet and Neumann boundary conditions, respectively~\cite{sava}.
In fact, not only does the second term in the above asymptotics support the conjecture, but it also shows that the latter is asymptotically correct for any
particular domain.

Although the conjecture remains open to this day, progress has been made with the best results so far for general planar domains being
\begin{equation}
\label{eq:polya-best}
 \mu_{k} \leq \fr{8\pi}{A}(k-1) \mbox{ and } \fr{2\pi}{A}k \leq \gamma_{k}, \;\; k=1,2,\dots.
\end{equation}
Here the Dirichlet bound was proved by Li and Yau in 1983~\cite{liyau}, and as was later realised, could also be recovered from work by Berezin~\cite{bere},
while the Neumann bound is due to Kr\"{o}ger in 1992~\cite{krog}.

In this paper we want to study the same problem in the Robin case, that is, we consider equation~\eqref{eigeq} together
with the boundary condition
\begin{equation}\label{robinboundary}
\begin{array}{lll}
  \fr{\partial u}{\partial \nu} + \alpha u= 0, & x \in \partial\Omega &\mbox{(Robin)},
\end{array}
\end{equation}
where $\alpha$ is a positive real parameter. It is a natural question to ask what form, if any, should an inequality of the same type
as~\eqref{originpolyaconj} take for the eigenvalues of the above problem. We first note that the corresponding eigenvalues, which we shall denote
by $\keig{k}{\Omega}{\alpha}$, also satisfy an inequality of Faber--Krahn type, namely,
\[
 \keig{1}{B}{\alpha} \leq \keig{1}{\Omega}{\alpha}
\]
for all positive $\alpha$, where $B$ denotes the ball with the same measure as $\Omega$ -- for planar domains this was proved by Bossel~\cite{boss}
and generalised by Daners~\cite{dane} to higher dimensions. The corresponding Hong--Krahn--Szego inequality was proved by the second author in~\cite{kenn}.
It might thus seem reasonable to expect the sequence of $\keig{k}{\Omega}{\alpha}$ to have a behaviour analogous to that of the Dirichlet problem.
However, it is known that the two-term Weyl asymptotic is in fact the same as that of the Neumann problem~\cite{frge}, namely,
\begin{equation}\label{robinweyl}
 \keig{k}{\Omega}{\alpha} =  \fr{4k\pi}{A}- 2\sqrt{k \pi} \fr{L}{A^{3/2}} + \so\left(k^{1/2}\right).
\end{equation}
A consequence of this is that clearly there cannot exist a lower bound for $\keig{k}{\Omega}{\alpha}$ with the same power of $k$ as in the Dirichlet case
which is also compatible with the first term of~\eqref{robinweyl}. To some extent, this was already pointed
out in~\cite{anfrke}, where it was seen, by considering the sequence of domains $\unionball{k}$ consisting of the $k$ disjoint unions of equal balls, that the asymptotic
behaviour of the infimum of the $k^{\rm th}$ Robin eigenvalue among domains of equal volume will satisfy
\[
 \inf_{|\Omega|=A} \keig{k}{\Omega}{\alpha} \leq \keig{k}{\unionball{k}}{\alpha}\leq 2\alpha \left(\fr{k \pi}{A}\right)^{1/2}.
\]
Thus, although it might still possible to consider inequalities of the type $\keig{k}{\Omega}{\alpha}\geq c k$, the constant $c$ would have to
depend on $\Omega$ in a nontrivial fashion and cannot, in any case, be optimal in an asymptotic sense as $k$ approaches infinity.

In order to gain some insight into this issue, it is, of course, tempting to follow P\'{o}lya's approach and see what happens in the simpler case of rectangles
or possibly even tiling domains. However, for Robin boundary conditions  there is no explicit closed form for the eigenvalues in the former case,
while in the latter two of the key ingredients used in~\cite{poly2}, namely the simple rescaling formula and monotonicity by inclusion which are fundamental in the Dirichlet proof do not apply for~\eqref{robinboundary}.

A first purpose of this paper is thus to obtain further understanding of this problem by studying the existence of P\'{o}lya-type
inequalities of the form
\[
 c k^\beta \leq \keig{k}{\Omega}{\alpha},
\]
where the constant $c$ depends only on the boundary parameter $\alpha$ and is independent of $\Omega$ within 
families of domains with a given area. A key point is the determination of the optimal power $\beta$. The two classes of domains which we shall consider here are
rectangles and disjoint unions of rectangles. There are two reasons for studying these two families. On the one hand, it is natural to
try to understand what happens in the case of
rectangles, by analogy with the Dirichlet case. On the other hand, and as will become clear, the two problems yield different values of $\beta$ and thus illustrate the
essential differences that may be expected even within the Robin problem. Furthermore, the behaviour for unions of rectangles should, in principle, be closer to what is to
be expected to happen in the general problem.

In this direction, our main result may be summarised as follows.
\begin{thmx}[P\'{o}lya-type inequalities]
\label{thm:polya}
Given positive numbers $\alpha$ and $A$, there exist positive constants $c_{r}$ and $c_{u}$, depending only on $\alpha$ and $A$,
such that the Robin eigenvalues satisfy
\[
 c_{r} k^{2/3} \leq \keig{k}{\Omega}{\alpha},
\]
for all rectangles with given area $A$, and
\[
 c_{u} k^{1/2} \leq \keig{k}{\Omega}{\alpha},
\]
for all unions of rectangles with total area $A$. Furthermore, the exponents $2/3$ and $1/2$ are optimal.
\end{thmx}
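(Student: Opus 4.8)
The plan is to establish the two lower bounds separately, using the tensor (separation of variables) structure that Robin eigenfunctions on a rectangle inherit, and then to exhibit near-optimal sequences of domains showing that neither exponent can be improved. Throughout, for a rectangle $\mathcal{R}=[0,a]\times[0,b]$ with $ab=A$, the eigenvalues are sums $\eigmode{m}{n}{a}{b}=\mu_m(a)+\mu_n(b)$, where $\mu_j(\ell)$ is the $j^{\rm th}$ Robin eigenvalue of the interval $[0,\ell]$ with parameter $\alpha$; the one–dimensional eigenvalues are the zeros of an explicit transcendental equation and in particular satisfy two crucial monotonicity-type facts I will establish first: (i) $\mu_j(\ell)$ is increasing in $j$ and decreasing in $\ell$, with $\mu_1(\ell)\to 0$ but $\mu_1(\ell)\ell\to 2\alpha$-type behaviour capturing the ``boundary layer'' scale, and (ii) a lower bound of the form $\mu_j(\ell)\ge c\min\{ j^2/\ell^2,\ \alpha/\ell,\ \alpha^2\}$ (up to constants depending only on $\alpha$), reflecting that the $j^{\rm th}$ interval eigenvalue is comparable to the Dirichlet one $j^2\pi^2/\ell^2$ once $j$ is large enough relative to $\alpha\ell$, and is of order $\alpha/\ell$ in the transitional regime.

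For the rectangle bound $c_r k^{2/3}\le \keig{k}{\Omega}{\alpha}$: fix $\mathcal{R}=[0,a]\times[0,b]$ with $a\ge b$, so $b\le\sqrt A\le a$. I count eigenvalues below a level $\Lambda$: $\eigcount{}{}{\Lambda}=\#\{(m,n):\mu_m(a)+\mu_n(b)\le\Lambda\}$. Using the one-dimensional bounds, $\mu_m(a)\le\Lambda$ forces $m\le C(a\sqrt\Lambda+\alpha a)$, and similarly for $n$ with $b$; but the key gain over the Dirichlet/Weyl count is that in the thin direction the number of admissible $n$ is controlled by $b\sqrt\Lambda+1$ while in the long direction it is controlled by $a\sqrt\Lambda+\alpha a = a\sqrt\Lambda+\alpha A/b$. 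Multiplying and using $ab=A$ gives $k\le\eigcount{}{}{\Lambda}\le C\big(A\Lambda + \alpha A^{3/2}\Lambda^{1/2}/b + b\sqrt\Lambda + \text{lower order}\big)$; the genuinely new term is the one scaling like $1/b$, which blows up as the rectangle degenerates. Optimising the worst case over $b\in(0,\sqrt A]$ — equivalently, recognising that a degenerate rectangle can only fit few eigenvalues below $\Lambda$ because the thin-direction quantisation $\mu_n(b)\ge c\min\{n^2/b^2,\alpha/b\}$ is forced to be large — one finds that the maximal $k$ with $\keig{k}{\mathcal{R}}{\alpha}\le\Lambda$ over all rectangles of area $A$ grows like $\Lambda^{3/2}$ rather than $\Lambda$, which is exactly the assertion $\Lambda=\keig{k}{\mathcal{R}}{\alpha}\ge c_r k^{2/3}$. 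Making this rigorous will require an honest two-sided analysis of $\mu_n(b)$ for small $b$, and this is the step I expect to be the main obstacle: one must show that the contribution of the thin direction cannot be cheaply evaded, i.e. that $\eigmode{m}{n}{a}{b}$ with $n\ge 2$ is bounded below by something of order $\alpha/b=\alpha a/A$, which together with $m$ needing $m\lesssim a\sqrt\Lambda$ forces the long side $a$ (hence $k$) to be small when $\Lambda$ is fixed. Optimising the trade-off between $n=1$ modes (cheap but limited in number by $a\sqrt\Lambda$) and $n\ge2$ modes (requiring $\Lambda\gtrsim\alpha a/A$) yields the $2/3$ exponent.

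For the union bound $c_u k^{1/2}\le\keig{k}{\Omega}{\alpha}$: if $\Omega=\bigsqcup_{i=1}^N \mathcal{R}^{(i)}$ with $\sum|\mathcal{R}^{(i)}|=A$, then $\keig{k}{\Omega}{\alpha}$ is the $k^{\rm th}$ smallest number in the merged list $\{\keig{j}{\mathcal{R}^{(i)}}{\alpha}\}$. Here the obstruction to a lower bound of order $k$ is precisely that $\keig{1}{\mathcal{R}^{(i)}}{\alpha}$ can be made arbitrarily small for each piece, so taking $\Omega=\unionsquare{k}$ a union of $k$ congruent squares of area $A/k$ gives $k$ eigenvalues all equal to $\keig{1}{\sq{\sqrt{A/k}}}{\alpha}$, whose size one computes from the 1D transcendental equation to be of order $\alpha/\sqrt{A/k}=\alpha\sqrt{k/A}$, hence $\asymp k^{1/2}$. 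For the matching lower bound: to have $k$ eigenvalues below $\Lambda$ one needs the tiles to contribute collectively $k$ values $\le\Lambda$; a single tile of area $a_i b_i$ contributes at most $C(a_i b_i\Lambda+(a_i+b_i)\sqrt\Lambda+1)\le C(a_i b_i\Lambda + \sqrt{a_ib_i}\sqrt\Lambda\cdot(\text{aspect ratio}) + 1)$ eigenvalues below $\Lambda$, but more usefully, by the 1D lower bound $\keig{1}{\mathcal{R}^{(i)}}{\alpha}\ge c\min\{\alpha/\max(a_i,b_i), \alpha^2,\ldots\}$, a tile can contribute \emph{any} eigenvalue below $\Lambda$ only if its larger side is $\gtrsim\alpha/\Lambda$; combined with the per-tile Weyl-type count $\le C(|\mathcal{R}^{(i)}|\Lambda+1)$, and the area constraint $\sum|\mathcal{R}^{(i)}|=A$, summing gives $k\le C(A\Lambda + N)$ where $N$ is the number of tiles, and each contributing tile has area $\gtrsim$ (smaller side)$\cdot\alpha/\Lambda$; the extremal configuration balances these so that $N\asymp A\Lambda$, yielding $k\le CA\Lambda^2$, i.e. $\Lambda\ge c_u(k/A)^{1/2}$. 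The delicate point, and the main obstacle on the union side, is ruling out configurations that mix many degenerate thin tiles of tiny area with a few large tiles: one must show via the 1D estimates that each thin tile still ``costs'' a definite amount of area per unit eigenvalue-count, which is where the lower bound $\mu_j(\ell)\ge c j^2/\ell^2$ for $j$ not too small, versus $\mu_1(\ell)\asymp\alpha/\ell$ for the principal mode, must be combined carefully. Finally, optimality of both exponents follows from the explicit families $\rect{A}{b}$ (long thin rectangle, for $2/3$) and $\unionsquare{k}$ (for $1/2$), whose eigenvalues I evaluate directly from the one-dimensional transcendental equation to match the claimed growth rates from above.
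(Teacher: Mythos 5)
Your route to the lower bounds is genuinely different from the paper's. The paper deduces Theorem~\ref{thm:polya} from two structural results: Theorem~\ref{thm:k-squares} (for $\alpha\lesssim k^{1/2}A^{-1/2}$ the minimiser among unions of rectangles is exactly $\unionsquare{k}$, proved rectangle-by-rectangle via the $(k,1)$ mode and a Neumann lattice-point count, then extended to disjoint unions by an induction built on a Wolf--Keller decomposition) and Theorem~\ref{thm:rectangles} (the optimal rectangle realises the optimal $(k,1)$ mode, with long side $\asymp k^{2/3}$). You instead invert an upper bound on the counting function directly --- over modes $(m,n)$ for a single rectangle, and over tiles for a union --- which is more elementary and avoids identifying the extremal domains, at the price of losing the sharp constants and asymptotics ($3(\pi\alpha/A)^{2/3}$ and $4\alpha A^{-1/2}$) that the paper obtains. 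The one-dimensional inputs are the same in both arguments ($\keig{1}{\interv{\ell}}{\alpha}\asymp 2\alpha/\ell$ for small $\ell$, and the Neumann bound $\keig{n}{\interv{\ell}}{\alpha}\ge\pi^2(n-1)^2/\ell^2$), your trade-off is exactly the paper's Remark~\ref{rem:k-1-balance}, and your optimality examples ($\unionsquare{k}$ and a rectangle of long side $\asymp k^{2/3}$) coincide with the paper's.

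Several intermediate steps in your sketch are, however, wrong or unjustified as written and need repair before the counting argument closes. (i) In the rectangle count you attach the constraint $\Lambda\gtrsim\alpha a/A$ to the $n\ge 2$ modes; it belongs to the $n=1$ modes: \emph{every} eigenvalue is $\ge\keig{1}{\interv{b}}{\alpha}\ge c(\alpha,A)\alpha/b=c\alpha a/A$, and this is what caps the number of cheap $(m,1)$ modes at $1+a\sqrt{\Lambda}/\pi\lesssim A\Lambda^{3/2}$, whereas the $n\ge2$ modes cost $\ge\pi^2/b^2$ and are absorbed into the two-dimensional Weyl term $A\Lambda$. (ii) For unions, the per-tile count contains the perimeter term $(a_i+b_i)\sqrt{\Lambda}$, which is \emph{not} controlled by the total area, yet you drop it when writing $k\le C(A\Lambda+N)$; it can only be disposed of by again invoking the Robin constraint (a contributing tile has shorter side $b_i\ge c/\Lambda$, so $a_i\sqrt{\Lambda}\le a_ib_i\Lambda^{3/2}/c$ and the perimeter sum is $\lesssim A\Lambda^{3/2}$). (iii) The constraint on a contributing tile must be placed on its \emph{smaller} side, not its larger one; it then forces area $\ge c^2/\Lambda^2$ per tile, hence $N\le A\Lambda^2/c^2$ --- not ``$N\asymp A\Lambda$'', which is in any case inconsistent with deducing $k\le CA\Lambda^2$ from $k\le C(A\Lambda+N)$. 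With (i)--(iii) repaired one gets $k\le C(A\Lambda+A\Lambda^{3/2}+A\Lambda^2)$ for unions and $k\le C(A\Lambda+A\Lambda^{3/2})$ for rectangles, which yield the claimed exponents for all large $k$ (the finitely many remaining $k$ being handled by positivity of the first eigenvalue, or by the monotonicity in $\alpha$ that the paper uses for the same purpose).
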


This theorem follows directly from the more detailed Theorems~\ref{thm:k-squares} and~\ref{thm:rectangles} below; in particular,
they, together with the fact that the eigenvalues are increasing with $\alpha$, allow us to give explicit lower bounds on the constants $c_{r}$ and $c_{u}$. 

The issue of determining the optimal constant in P\'{o}lya's conjecture for the Dirichlet and Neumann problems is naturally related to that
of considering the extremal values of the eigenvalues $\gamma_{k}$ and $\mu_{k}$. In fact, in the case of general domains with a measure
restriction, this connection is much stronger than had been previously thought, in that it was shown recently that P\'{o}lya's
conjecture is equivalent to the first term in the asymptotic behaviour of the extremal values being the same as that in the Weyl asymptotics
for a fixed domain~\cite{colels}. This effect is a direct consequence of the subadditivity and superadditivity of the sequences of (dimensionally normalised)
extremal eigenvalues in the Dirichlet and Neumann cases, respectively.

Taking this into consideration, the approach we follow in this paper is mixed, in the sense that we will prove Theorem~\ref{thm:polya} by
studying the sequence of extremal sets in both cases. We recall that even for Dirichlet eigenvalues, which in the case of rectangles
are known explicitly, it is a nontrivial problem to show that the sequence of extremal rectangles does converge to the square as $k$ goes to
infinity~\cite{anfr2}. This result, which is closely related to a lattice point counting problem, has also been extended to Neumann boundary
conditions~\cite{vdbbugi}, higher dimensions~\cite{gila},
and several variants with a more geometric~\cite{ar} or number-theoretic flavour~\cite{guwa,arla,lali1,lali2,ma,mast}.
This is thus also a motivation to study the evolution of the sequence of extremal rectangles. However, since for the Robin problem the asymptotic
extremal domain is no longer thought to be the square, we also consider the situation where we allow for arbitrary unions of rectangles. This
is a natural setting to consider, due to the considerations made above and, in particular, the results obtained in~\cite{anfrke}, where the following
conjecture was made~(\cite[Section~5]{anfrke}; see also \cite[Open Problem~4.38]{bufrke}).
\begin{conjecture*}[Optimality of $k$ equal balls]
\label{conj:robin-optimisers}
Fix a dimension $d\geq 2$ and $k\geq 3$. Then there exists some $\alpha_k^\ast>0$ depending only on $k$ and $d$ such that
\begin{displaymath}
	 \keig{k}{\unionball{k}}{\alpha} \leq \keig{k}{\Omega}{\alpha}
\end{displaymath}
for all $\alpha \in (0,\alpha_k^\ast]$ and all (sufficiently smooth) domains $\Omega\subset \R^d$ with $|\Omega|=1$, where $\unionball{k}$ is the disjoint union of $k$ equal balls of total volume $1$. Moreover, $\unionball{k}$ is \emph{not} optimal for $\alpha > \alpha_k^\ast$, and $\alpha_k^\ast \to \infty$ as $k\to \infty$.
\end{conjecture*}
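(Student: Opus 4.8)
The plan is to combine a structural description of the minimisers of $\keig{k}{\cdot}{\alpha}$ at fixed volume with a two-sided asymptotic analysis in the boundary parameter $\alpha$; note that the case $k=2$ is essentially the content of the Robin Hong--Krahn--Szeg\H{o} inequality of~\cite{kenn}, so only $k\ge 3$ is genuinely at issue. First I would set $\lambda_k^+(\alpha):=\inf\{\keig{k}{\Omega}{\alpha}:|\Omega|=1\}$ and, working in a relaxed admissible class in which the Robin shape-optimisation problem is known to be well posed (quasi-open sets, or the free-discontinuity framework), show that the infimum is attained and that any minimiser $\Omega^\ast$ has at most $k$ connected components --- by the argument standard in the Dirichlet case, one removes a component disjoint from the supports of the first $k$ eigenfunctions and restores the volume constraint, the latter step requiring some care in the Robin setting because of the coupling of dilations with $\alpha$. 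This reduces matters to comparing configurations $\Omega=\Omega_1\cup\dots\cup\Omega_m$ with $m\le k$ and $\sum_i|\Omega_i|=1$.

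The small-$\alpha$ regime is then favourable. On a fixed connected piece $\omega$ one has the first-order expansion $\keig{1}{\omega}{\alpha}=\alpha\,\frac{|\partial\omega|}{|\omega|}+\bo(\alpha^2)$ (a perturbation of the zero Neumann eigenvalue, whose eigenfunction is constant), whereas $\keig{2}{\omega}{\alpha}\to\mu_2(\omega)>0$; hence for $\alpha$ small the $k$ lowest eigenvalues of $\Omega$ are exactly the first eigenvalues of $k$ distinct components, forcing $m=k$, and
\[
 \keig{k}{\Omega}{\alpha}=\alpha\max_{1\le i\le k}\frac{|\partial\Omega_i|}{|\Omega_i|}+\bo(\alpha^2)
 \ge \alpha\,c_d\max_{1\le i\le k}|\Omega_i|^{-1/d}+\bo(\alpha^2)\ge \alpha\,c_d\,k^{1/d}+\bo(\alpha^2),
\]
using the isoperimetric inequality $|\partial\Omega_i|\ge c_d|\Omega_i|^{(d-1)/d}$ and $\sum_i|\Omega_i|=1$, with equality to leading order precisely when the $\Omega_i$ are congruent balls. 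Since the right-hand side equals $\keig{k}{\unionball{k}}{\alpha}\,(1+\so(1))$, this already establishes the desired inequality in the limit $\alpha\to 0$.

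The substantive point is to upgrade this asymptotic statement to a genuine inequality on a whole interval $(0,\alpha_k^\ast]$, that is, to make the $\bo(\alpha^2)$ remainders uniform over the non-compact family of competitors. I would argue by contradiction: if $\keig{k}{\Omega_n}{\alpha_n}\le\keig{k}{\unionball{k}}{\alpha_n}$ with $\alpha_n\to 0$, then the surface-to-volume ratios of the relevant components stay bounded, a \emph{quantitative} isoperimetric inequality forces these components to converge to $k$ equal balls, and a second-variation analysis at $\unionball{k}$ --- which is a strict minimiser of the leading-order functional --- yields a contradiction for $n$ large. The genuinely delicate competitors are the \emph{nearly disconnected} ones (domains with thin necks), for which the component count is unstable and the perturbation expansion above breaks down; these have to be excluded directly, by showing that a thin connecting neck carries a large amount of Robin boundary energy and so cannot occur in a near-optimal configuration. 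I expect this combination of uniformity and degeneracy control to be the main obstacle. Finally, non-optimality of $\unionball{k}$ for $\alpha$ large follows from the convergence $\keig{k}{\Omega}{\alpha}\to\gamma_k(\Omega)$ as $\alpha\to\infty$ together with the elementary fact that $\gamma_k$ of $k$ equal balls exceeds $\gamma_k$ of the ball of volume $1$; and the divergence $\alpha_k^\ast\to\infty$ follows by tracking the effective Robin parameter on each constituent ball, which by scaling is of order $\alpha k^{-1/d}$, so that the Neumann-type regime in which $\unionball{k}$ wins persists at least up to $\alpha$ of order $k^{1/d}$.
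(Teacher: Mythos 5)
The statement you are proving is labelled as a \emph{conjecture} in the paper, and it remains one: the authors explicitly say they only provide ``strong supporting evidence'' by proving the analogous result in the restricted setting of rectangles and unions of rectangles (Theorems~\ref{thm:k-squares} and~\ref{thm:brexit}), where separation of variables, explicit interval eigenvalue bounds, a lattice-point counting function, and the Wolf--Keller induction of Lemmas~\ref{lem:disjoint-growth}--\ref{lem:precise-disjoint-growth} make everything computable. There is no proof in the paper for general domains, so your proposal cannot be ``the same as the paper's''; the question is whether it closes the open problem, and it does not.

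Your leading-order analysis is sound and is essentially the heuristic from \cite{anfrke} that motivated the conjecture: for a \emph{fixed} disconnected competitor, $\keig{1}{\omega}{\alpha}=\alpha|\partial\omega|/|\omega|+\bo(\alpha^2)$ plus the isoperimetric inequality identifies $k$ equal balls as the unique leading-order minimiser. The genuine gaps are exactly the steps you flag and then do not carry out. First, the $\bo(\alpha^2)$ remainder and the threshold below which ``the first $k$ eigenvalues come from $k$ distinct components'' both depend on the competitor, and the class of admissible $\Omega$ is non-compact; your proposed rescue via a compactness/contradiction argument presupposes a topology in which near-minimisers converge and in which $\keig{k}{\cdot}{\alpha}$ is lower semicontinuous, and for the Robin problem on rough or degenerating domains this is precisely the unresolved difficulty (well-posedness of the relaxed problem for higher Robin eigenvalues is itself not settled). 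Second, your dismissal of thin necks is asserted, not proved, and the heuristic is dubious: a thin neck has \emph{small} boundary measure and small volume, so it contributes little Robin energy while changing the component count, which is why such competitors are dangerous rather than obviously excluded. Third, the conjecture asserts a sharp dichotomy at a single threshold $\alpha_k^\ast$ (optimal for $\alpha\le\alpha_k^\ast$, non-optimal for all $\alpha>\alpha_k^\ast$); your argument at best gives optimality for small $\alpha$ and non-optimality for large $\alpha$, and you would additionally need to show that the set of $\alpha$ for which $\unionball{k}$ is optimal is an interval --- nothing in your proposal addresses this (nor does the paper claim to know how). Finally, the ``elementary fact'' that $\gamma_k$ of $k$ equal balls exceeds the optimal Dirichlet value needs justification for general $k$ and $d$ (it amounts to a P\'olya-type bound for the competitor), and the claim $\alpha_k^\ast\to\infty$ is supported only by a scaling heuristic. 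In short: correct identification of the mechanism, but the proposal reproduces the conjecture's motivation rather than a proof.
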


Here we shall provide strong supporting evidence for this conjecture by essentially proving it in the restricted setting of rectangles and unions of rectangles (with $k$ equal squares taking the role of $k$ equal balls); we also expect some of the tools and insights we develop to be of use when investigating the conjecture on more general domains.

For any positive values of the area $A$ and boundary parameter $\alpha$, and any positive integer $k$, we will write $\keigopt{k}{A}{\alpha}$ to stand for the extremal quantity
\begin{displaymath}
	\inf \{ \keig{k}{\Omega}{\alpha}:\Omega \subset \R^2 \text{ is a disjoint union of rectangles, } |\Omega|=A \},
\end{displaymath}
and we let $\unionsquare{k}$ denote the disjoint union of $k$ equal squares of the same total area $A$. Our main result in this context is then
\begin{thmx}[Optimality of $k$ equal squares]
\label{thm:k-squares}
There exists an absolute positive constant $C_1$ such that, for any finite disjoint union of rectangles $\Omega$ having total area $A$, we have
\begin{displaymath}
	\keig{k}{\unionsquare{k}}{\alpha} < \keig{k}{\Omega}{\alpha}
\end{displaymath}
whenever $\alpha \leq C_1 k^{1/2}A^{-1/2}$, where $\unionsquare{k}$ has total area $A$. Furthermore, for such
pairs $\alpha, k$, we have
\begin{equation}
\label{eq:opt-value-est}
	\frac{4\pi^2 k\alpha}{A^{1/2}\pi^2 k^{1/2} + 2A\alpha} < \keigopt{k}{A}{\alpha} \leq \frac{4k^{1/2}\alpha}{A^{1/2}}.
\end{equation}
\end{thmx}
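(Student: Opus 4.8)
The plan is to reduce everything to one-dimensional (interval) Robin eigenvalues. A rectangle $R = \interv{a}\times\interv{b}$ of side lengths $a,b$ has Robin eigenvalues that are sums $\eigmode{m}{n}{a}{b} = \mu_m(a,\alpha) + \mu_n(b,\alpha)$, where $\mu_m(\ell,\alpha)$ is the $m$-th Robin eigenvalue of the interval of length $\ell$ with parameter $\alpha$; and a disjoint union of rectangles has as its spectrum the ordered merge of the spectra of its components. So $\keig{k}{\Omega}{\alpha}$ is determined by a counting function over a family of interval-eigenvalue sums. The first step is to record the elementary facts we need about $\mu_m(\ell,\alpha)$: it is the $m$-th eigenvalue, it is strictly increasing in $\alpha$ and (for fixed $m$) strictly decreasing in $\ell$, it scales as $\mu_m(\ell,\alpha) = \ell^{-2}\mu_m(1,\ell\alpha)$, and crucially that $\mu_1(\ell,\alpha)$ is bounded above by $\min\{\alpha^2, \,2\alpha/\ell\}$ (from test functions, e.g. the constant and a linear function) and from below by something comparable. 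These two-sided bounds on the ground-state interval eigenvalue are what will let us control the $k$ equal squares.

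Next I would prove the upper bound in \eqref{eq:opt-value-est}. For $\unionsquare{k}$, the union of $k$ equal squares of side $s=(A/k)^{1/2}$, the first $k$ eigenvalues of the union are each equal to $2\mu_1(s,\alpha)$ — one ground state per square — provided this value lies strictly below the next interval eigenvalue contribution, i.e. below $\mu_1(s,\alpha)+\mu_2(s,\alpha)$, which is automatic. Hence $\keig{k}{\unionsquare{k}}{\alpha} = 2\mu_1(s,\alpha) \le 2 \cdot \frac{2\alpha}{s} = \frac{4\alpha}{s} = \frac{4k^{1/2}\alpha}{A^{1/2}}$, using the linear test function on the interval. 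Since $\keigopt{k}{A}{\alpha}\le \keig{k}{\unionsquare{k}}{\alpha}$ by definition, this gives the right-hand inequality immediately, and it holds for all $\alpha,k$. The left-hand (lower) bound on $\keigopt{k}{A}{\alpha}$ is the quantitative Pólya-type estimate: I would use a Li–Yau/Berezin-type or Kröger-type argument applied to the union of rectangles, or more elementarily a direct counting estimate. The key mechanism is that to have $k$ eigenvalues below a level $\Lambda$, one needs, on each rectangle of area $a_j$, roughly $\frac{a_j}{4\pi}(\Lambda - c\alpha^2)_+$ lattice-type points (the Robin correction only shifts the effective level downward by $O(\alpha^2)$ per interval direction, and by $O(\alpha/\text{side})$ when a side is short), and summing over $j$ with $\sum a_j = A$ and rearranging yields a lower bound for $\Lambda$ of the stated rational form $\frac{4\pi^2 k\alpha}{A^{1/2}\pi^2 k^{1/2}+2A\alpha}$; the two terms in the denominator reflect the two regimes ($\alpha$ small compared to $k^{1/2}/A^{1/2}$, where the bound is $\sim 2k^{1/2}\alpha/A^{1/2}$, versus $\alpha$ large, where it is $\sim 2\pi^2 k/A$).

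Finally, for the strict optimality $\keig{k}{\unionsquare{k}}{\alpha} < \keig{k}{\Omega}{\alpha}$ in the regime $\alpha \le C_1 k^{1/2}A^{-1/2}$: the idea is that in this regime, by the upper bound just proved, $\keigopt{k}{A}{\alpha}$ is of order at most $k^{1/2}\alpha/A^{1/2}$, which forces any near-optimal union $\Omega$ to have many components, and forces the relevant eigenvalues to be ground states of thin-ish rectangles. One then shows among all ways of distributing area $A$ into rectangles so as to push the $k$-th eigenvalue down, the optimum is to use exactly $k$ pieces (more pieces waste area on higher-order modes you don't need; fewer pieces force a second mode $\mu_2$ on some rectangle, which costs more than $\mu_1$), and among $k$ pieces the ground-state sum $\max_j 2\mu_1(\text{sides of }R_j,\alpha)$ is minimised by making all rectangles equal squares — this uses that $\mu_1(\ell,\alpha)$ as a function of the side lengths is strictly Schur-convex-type / that for fixed area $a$ a rectangle's $2\mu_1$ is minimised by the square, together with a convexity/rearrangement argument over the areas $a_j$. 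Strictness throughout comes from the strict monotonicity and strict convexity statements. \emph{The main obstacle I anticipate} is this last combinatorial-optimisation step: ruling out \emph{all} competing unions (in particular, unions with more than $k$ very elongated rectangles whose ground states could in principle undercut $2\mu_1(s,\alpha)$, and unions where a single rectangle contributes several low eigenvalues) requires a careful case analysis and is exactly where the threshold constant $C_1$ and the restriction $\alpha\lesssim k^{1/2}A^{-1/2}$ enter — beyond that threshold the identity $\keig{k}{\unionsquare{k}}{\alpha}=2\mu_1(s,\alpha)$ may fail (higher modes on the squares intrude) and squares genuinely cease to be optimal, matching the $\alpha_k^\ast$ of the conjecture.
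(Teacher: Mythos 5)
Your setup (separation of variables, merging spectra of components) and your derivation of the upper bound $\keigopt{k}{A}{\alpha}\leq \keig{k}{\unionsquare{k}}{\alpha}=2\keig{1}{\interv{(A/k)^{1/2}}}{\alpha}\leq 4k^{1/2}\alpha A^{-1/2}$ are correct and coincide with the paper's. But the core of the theorem --- that $\unionsquare{k}$ actually \emph{is} optimal for $\alpha\leq C_1k^{1/2}A^{-1/2}$ --- is left as a heuristic, and the step you yourself flag as ``the main obstacle'' is precisely where the proof lives; no workable mechanism for it is supplied. The Schur-convexity/rearrangement plan does not address the hard competitor: a \emph{single} long thin rectangle whose first $k$ eigenvalues are all $(j,1)$ modes. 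The paper handles a single rectangle $\rect{a}{A}$ by a dichotomy in the aspect ratio: for $a\geq k^{1/2}$ one shows $\keig{k}{\rect{a}{A}}{\alpha}=\eigmode{k}{1}{\rect{a}{A}}{\alpha}$ (via the fundamental-gap inequality) and then beats $\unionsquare{k}$ by a nodal-domain argument combined with the isoperimetric inequality for $\lambda_1$ (Theorem~\ref{thm:lambda1}); for $a\leq k^{1/2}$ one bounds the \emph{Neumann} lattice-point counting function from above to show $\eigcount{\rect{a}{A}}{\alpha}{4k^{1/2}\alpha/A^{1/2}}\leq k$, and it is this counting estimate that produces the threshold $\alpha\leq C_1k^{1/2}A^{-1/2}$ and the explicit $C_1$. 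Neither ingredient appears in your sketch. For disjoint unions there is a further issue you do not address: applying the single-rectangle result to a component of area $A'<A$ contributing $j<k$ eigenvalues changes the admissible range of $\alpha$ by the scaling \eqref{eq:homothetic-scaling}; the paper resolves this by an induction on $k$ through the Wolf--Keller principle (Lemma~\ref{lem:wolf-keller}), exploiting the fortunate identity $\sqrt{k/(k-j)}\cdot C\sqrt{k-j}=C\sqrt{k}$ so that the threshold survives the induction (Lemmas~\ref{lem:disjoint-growth} and~\ref{lem:precise-disjoint-growth}).

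Two smaller points. First, the lower bound in \eqref{eq:opt-value-est} is not obtained by a Berezin--Li--Yau-type estimate over arbitrary unions, as you propose; it is a \emph{consequence} of optimality: once $\keigopt{k}{A}{\alpha}=\keig{k}{\unionsquare{k}}{\alpha}=2\keig{1}{\interv{(A/k)^{1/2}}}{\alpha}$ is known, one simply inserts the one-dimensional lower bound $\keig{1}{\interv{a}}{\alpha}\geq 2\alpha\pi^2/(a(\pi^2+2\alpha a))$ with $a=(A/k)^{1/2}$, which reproduces the stated rational expression exactly. Deriving that expression directly for all unions, as you suggest, would be a substantially harder (and unnecessary) task. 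Second, the claimed bound $\keig{1}{\interv{\ell}}{\alpha}\leq\min\{\alpha^2,2\alpha/\ell\}$ is false for positive $\alpha$: when $\ell$ is small one has $\keig{1}{\interv{\ell}}{\alpha}\sim 2\alpha/\ell\gg\alpha^2$. Only the $2\alpha/\ell$ bound (from the constant test function) is correct, and fortunately it is the only one you use downstream.
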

\begin{remark} 
 It is possible to improve the upper bound for $\keigopt{k}{A}{\alpha}$ given above, by using the more precise (and complicated)
 bounds given in the Appendix -- see Proposition~\ref{prop:boundkequalsquares}.
\end{remark}

Our proof gives an explicit estimate on the constant, namely $C_1>\fr{\pi^2}{18}\left(7 - 2\sqrt{10}\right)\approx 0.370$ -- see Sections~\ref{sec:uk-in-eu-fat} and~\ref{sec:disjoint-unions} for the derivation of the constant for rectangles and unions of rectangles, respectively. Thus, for any fixed positive $\alpha$, for $k$ sufficiently large (or equivalently, for any fixed $k$ for $\alpha$ sufficiently small, for an explicitly given value), the minimiser of $\keig{k}{\,\cdot\,}{\alpha}$ among all unions of rectangles of fixed total area is the domain consisting of the disjoint union of $k$ equal squares. 

This also allows us to obtain estimates on the constants $c_{u}$ and $c_{r}$ appearing in Theorem~\ref{thm:polya}. We thus have, for instance,
\[
 c_{u} \geq \left\{
 \begin{array}{ll}
  \fr{4\pi^2 \alpha}{A^{1/2}\left(\pi^2 + 2 \alpha A^{1/2}\right)}, & \alpha < C_{1} k^{1/2}A^{-1/2}\eqskip
  \fr{4\pi^2 C_{1}}{A\left(\pi^2+2A^{1/2}C_{1}\right)}, & \alpha \geq C_{1} k^{1/2}A^{-1/2}
 \end{array}
\right.
\]
with $C_{1}$ as above.

Moreover, the form of the given relationship between $\alpha$ and $k$ is optimal:

\begin{thmx}[Loss of optimality of $k$ equal squares]
\label{thm:brexit}
There exists a further absolute positive constant $C_2$ such that $\keig{k}{\,\cdot\,}{\alpha}$ is not minimised among all finite unions of rectangles of fixed total area $A$ by $\unionsquare{k}$ whenever $\alpha \geq C_2 k^{1/2} A^{-1/2}$.
\end{thmx}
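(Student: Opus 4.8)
The plan is to exhibit an explicit competitor which beats $\unionsquare{k}$ in the Robin problem once $\alpha$ is large, the natural candidate being $\mathcal{S}$, the single square of area $A$ (itself a one‑term disjoint union of rectangles). The heuristic is the Dirichlet limit: as $\alpha\to\infty$ one has $\keig{k}{\mathcal{S}}{\alpha}\uparrow\gamma_k(\mathcal{S})=\pi^2 j_k/A$, where $j_k$ is the $k^{\rm th}$ term, counted with multiplicity, of the sequence $\{m^2+n^2:m,n\in\N\}$, whereas $\keig{k}{\unionsquare{k}}{\alpha}\uparrow 2\pi^2 k/A$; since the Gauss circle law in the open quadrant gives $j_k=\tfrac{4}{\pi}k+\bo(k^{1/2})$ and $\tfrac{4}{\pi}<2$, the square strictly wins in the limit, and the task is to make this quantitative at finite $\alpha$.

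For the upper bound on the competitor, monotonicity of the Robin eigenvalues in $\alpha$ (or directly the min–max characterisation, restricting to subspaces of $H^1_0$) gives $\keig{k}{\mathcal{S}}{\alpha}\le\gamma_k(\mathcal{S})=\pi^2 j_k/A$, and the elementary count $\#\{(m,n)\in\N^2:m^2+n^2\le R^2\}\ge\tfrac{\pi}{4}R^2-c_0 R$ for $R\ge 1$ (with $c_0$ absolute) yields $j_k\le\tfrac{4}{\pi}k+c_1 k^{1/2}+c_2$ with explicit absolute $c_1,c_2$. For the lower bound on $\keig{k}{\unionsquare{k}}{\alpha}$, each component of $\unionsquare{k}$ is a square of side $\ell=(A/k)^{1/2}$ whose first Robin eigenvalue is simple and, by separation of variables, equals $2\,\keig{1}{(0,\ell)}{\alpha}$, so $\keig{k}{\unionsquare{k}}{\alpha}=2\,\keig{1}{(0,\ell)}{\alpha}$; writing $\keig{1}{(0,\ell)}{\alpha}=4t^2/\ell^2$ with $t\in(0,\pi/2)$ solving $t\tan t=\alpha\ell/2$, the bound $\tan t=\tfrac{\alpha\ell}{2t}>\tfrac{\alpha\ell}{\pi}$ together with $\arctan x+\arctan(1/x)=\pi/2$ and $\arctan x<x$ gives $\tfrac{\pi}{2}-t<\tfrac{\pi}{\alpha\ell}$, hence
\[
 \keig{k}{\unionsquare{k}}{\alpha}\;>\;\frac{2\pi^2 k}{A}\left(1-\frac{2}{\alpha}\Bigl(\frac{k}{A}\Bigr)^{1/2}\right)^{2}
\]
(the same type of one‑dimensional estimate that underlies the lower bound in Theorem~\ref{thm:k-squares}), which for $\alpha\ge C_2 k^{1/2}A^{-1/2}$ is at least $\tfrac{2\pi^2 k}{A}(1-\tfrac{2}{C_2})^2$. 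Comparing the two estimates, it suffices that $\pi^2 j_k< 2\pi^2 k(1-\tfrac{2}{C_2})^2$, i.e. $j_k<2k(1-\tfrac{2}{C_2})^2$; since $\tfrac{4}{\pi}<2$ one fixes an absolute $C_2$ with $(1-\tfrac{2}{C_2})^2>\tfrac{2}{\pi}$, and then $j_k\le\tfrac{4}{\pi}k+c_1k^{1/2}+c_2<2k(1-\tfrac{2}{C_2})^2$ for every $k$ beyond an explicit threshold, giving $\keig{k}{\mathcal{S}}{\alpha}<\keig{k}{\unionsquare{k}}{\alpha}$ and hence the asserted non‑optimality of $\unionsquare{k}$.

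The delicate point is uniformity in $k$. For the finitely many values of $k$ below the threshold the asymptotic lattice‑point bound is too crude and one must insert the exact value of $j_k$; this is harmless for most such $k$, but for a handful of intermediate values (for instance $k=7,9$, where $j_k$ falls only just below $2k$) the margin nearly disappears, so a somewhat larger absolute $C_2$ — or a finite case check — is needed, and for the very smallest $k$, where $j_k$ is not strictly below $2k$, the single square ceases to be an improvement and a separate argument (or a different competitor) is required. I therefore expect the main technical burden to be precisely the passage from the essentially $k$‑dependent bound that the competitor argument supplies most directly to a single clean absolute constant $C_2$ valid for all $k$ in the stated range; one could shave the lower‑order terms by taking the extremal rectangle for $\gamma_k$ rather than $\mathcal{S}$ as competitor, but this affects only the constants and not the $k^{1/2}$‑scaling.
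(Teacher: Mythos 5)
Your route is genuinely different from the paper's. You compare $\unionsquare{k}$ with the single square $\sq{\sqrt{A}}$, bounding the competitor from above by its Dirichlet eigenvalue $\pi^2 j_k/A$ via a lattice-point count, and bounding $\keig{k}{\unionsquare{k}}{\alpha}=2\keig{1}{\interv{(A/k)^{1/2}}}{\alpha}$ from below by an elementary tangent estimate; both steps are correct, and for every $k$ with $j_k<2k$ (in particular $k=3$ and all $k\geq 6$) this does give non-optimality of $\unionsquare{k}$ in a region $\alpha\geq C_2k^{1/2}A^{-1/2}$, with the same $k^{1/2}$ scaling as the paper. The paper instead compares $\unionsquare{k}$ with the union of $k-3$ small squares and one square of area $3A/k$, and locates the transition curve $\keig{1}{\sq{\sqrt{A/k}}}{\alpha}=\keig{3}{\sq{\sqrt{3A/k}}}{\alpha}$ by analysing the resulting transcendental system (Theorem~\ref{transition3to1}); that competitor works uniformly for every $k\geq 3$ and yields a far smaller constant (rigorously $C<5\pi^2/2$, numerically $\approx 7.58$), whereas your route is forced up to roughly $C_2>71$ already by $k=9$, where $j_9=17$ and you need $(1-2/C_2)^2>17/18$.

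The genuine gap is at $k=4$ and $k=5$. There $j_4=8=2\cdot 4$ and $j_5=10=2\cdot 5$, so $\gamma_k(\sq{\sqrt{A}})$ coincides exactly with the Dirichlet limit of $\keig{k}{\unionsquare{k}}{\alpha}$; worse, the next-order term in the large-$\alpha$ expansions of Appendix~\ref{sec:interval} shows that $\keig{k}{\unionsquare{k}}{\alpha}<\keig{k}{\sq{\sqrt{A}}}{\alpha}$ for all large $\alpha$ in these two cases (for $k=4$ the corrections are $-64\pi^2A^{-3/2}\alpha^{-1}$ for $\unionsquare{4}$ versus $-32\pi^2A^{-3/2}\alpha^{-1}$ for the square), so the single square is \emph{never} an improvement and no choice of $C_2$ rescues it. You flag this (``a different competitor is required'') but do not supply one, and since the theorem is meant for all $k\geq 3$ (it is the counterpart of Theorem~\ref{thm:k-squares}, and $k\leq 2$ is excluded by Theorem~\ref{thm:lambda1} and Corollary~\ref{cor:lambda2}), these two cases must actually be closed --- e.g.\ by the paper's competitor, which for $k=4$ is one square of area $A/4$ plus one of area $3A/4$, slightly rebalanced to break the Dirichlet-limit tie. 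The remaining loose ends (an explicit constant in the lattice-point bound, the finite check below the threshold, and strictness of $j_k<2k$ for $k\geq 6$) are routine but need to be written out to extract an explicit $C_2$.
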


In fact, we obtain the exact form of the curve where having three of the squares of $\unionsquare{k}$ replaced by one larger square will provide the same eigenvalue; based on results from \cite{anfrke} for balls and numerics it is to be expected, although it is not yet known, that this is the exact point where $\unionsquare{k}$ stops being the optimiser. Our result again includes an explicit estimate on $C_2$; see Theorem~\ref{transition3to1} for the details. In fact, for fixed $k$, as $\alpha \to \infty$ the optimiser converges to its Dirichlet counterpart, as we show in Theorem~\ref{thm:robin-dirichlet-convergence}. Moreover, for any \emph{fixed} domain $\Omega$ there exists a
constant $C_\Omega > 0$, which numerically generally appears to be close to the numerically optimal $C_2$, such that $\keig{k}{\Omega}{\alpha} < \keig{k}{\unionsquare{k}}
{\alpha}$ whenever $\alpha \geq C_\Omega k^{1/2}A^{-1/2}$, as we show in Section~\ref{sec:higher-dimension}. All this highlights a sharp difference in qualitative behaviour
between regions of the form $\alpha \leq c k^{1/2}A^{-1/2}$ and $\alpha \geq c k^{1/2} A^{-1/2}$.

Nevertheless, the fact that, for any fixed $\alpha$, $\unionsquare{k}$ becomes the extremal domain for all sufficiently large $k$, allows us to describe the asymptotic behaviour of the optimal values $\keigopt{k}{A}{\alpha}$ as $k\to\infty$ for fixed $A$ and $\alpha$.

\begin{corx}[Asymptotic behaviour of optimal unions of rectangles]
\label{cor:optimal-asymptotic}
For any given positive values of the area $A$ and boundary parameter $\alpha$,
\begin{displaymath}
	\lim_{k\to\infty} \frac{\keigopt{k}{A}{\alpha}}{k^{1/2}} = \frac{4\alpha}{A^{1/2}},
\end{displaymath}
and indeed, $\keigopt{k}{A}{\alpha}$ has the same asymptotic behaviour as $k$ goes to infinity as $\unionsquare{k}$, namely,
\begin{equation}
\label{eq:optimal-asymptotic}
\begin{array}{lll}
	\keigopt{k}{A}{\alpha} & = & \fr{4\alpha}{A^{1/2}} k^{1/2} - \fr{2\alpha^2}{3} + \fr{4A^{1/2}\alpha^3}{45} k^{-1/2}\eqskip
	& & \hspace*{5mm}- \fr{8A\alpha^4}{945}k^{-1} + \fr{4A^{3/2}\alpha^5}{1475} k^{-3/2} + \bo(k^{-2})
\end{array}
\end{equation}
as $k \to \infty$.
\end{corx}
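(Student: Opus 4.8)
The plan is to exploit Theorem~\ref{thm:k-squares}: since that result guarantees that $\keigopt{k}{A}{\alpha} = \keig{k}{\unionsquare{k}}{\alpha}$ for all $k$ sufficiently large (with $A$ and $\alpha$ fixed, the condition $\alpha \leq C_1 k^{1/2}A^{-1/2}$ holds eventually), it suffices to compute the asymptotic expansion of $\keig{k}{\unionsquare{k}}{\alpha}$ as $k\to\infty$ and transfer it verbatim. Thus the corollary is essentially a one-variable asymptotic analysis problem for the $k^{\rm th}$ eigenvalue of $k$ disjoint equal squares.

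First I would reduce to a single square. If $\unionsquare{k}$ consists of $k$ equal squares of side $\ell$ with $k\ell^2 = A$, so $\ell = (A/k)^{1/2}$, then the eigenvalues of $\unionsquare{k}$ are the eigenvalues of a single such square, each repeated $k$ times (ordered with multiplicity). Hence $\keig{k}{\unionsquare{k}}{\alpha} = \keig{1}{\sq{\ell}}{\alpha}$, the first Robin eigenvalue of the square of side $\ell = (A/k)^{1/2}$. By separation of variables on the square, $\keig{1}{\sq{\ell}}{\alpha} = 2\,\mu(\ell,\alpha)$, where $\mu(\ell,\alpha)$ is the first eigenvalue of the one-dimensional Robin problem $-u'' = \mu u$ on $(0,\ell)$ with $-u'(0)+\alpha u(0)=0$ and $u'(\ell)+\alpha u(\ell)=0$. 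By the usual rescaling, $\mu(\ell,\alpha) = \ell^{-2}\,\nu(\alpha\ell)$, where $\nu(\beta)$ is the first eigenvalue of the Robin problem on the unit interval with parameter $\beta$; equivalently $\nu(\beta) = t^2$ where $t>0$ is the smallest root of the transcendental equation $\tan(t/2) = \beta/t$ (coming from the even eigenfunction $\cos(t(x-1/2))$). Writing $\beta = \alpha\ell = \alpha (A/k)^{1/2} \to 0$ as $k\to\infty$, I would expand the smallest root $t(\beta)$ of $t\tan(t/2) = \beta$ as a power series in $\beta$: from $t\tan(t/2) = t^2/2 + t^4/24 + t^6/240 + \dots = \beta$ one inverts to get $t^2 = 2\beta - \tfrac{1}{3}\beta^2 + \dots$, and then $\nu(\beta) = t(\beta)^2$ has a convergent expansion in powers of $\beta$ near $0$.

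Assembling the pieces, $\keig{k}{\unionsquare{k}}{\alpha} = 2\ell^{-2}\nu(\alpha\ell) = \frac{2k}{A}\,\nu\!\big(\alpha (A/k)^{1/2}\big)$; substituting the expansion of $\nu$ in powers of $\beta = \alpha A^{1/2} k^{-1/2}$ and multiplying by $2k/A$ produces exactly an expansion in powers of $k^{-1/2}$, whose leading term is $\frac{2k}{A}\cdot 2\beta = \frac{4\alpha}{A^{1/2}}k^{1/2}$, giving the stated limit, and whose subsequent coefficients match the constants $-\tfrac{2\alpha^2}{3}$, $\tfrac{4A^{1/2}\alpha^3}{45}$, etc., in~\eqref{eq:optimal-asymptotic}. (The matching of each coefficient is a routine but slightly lengthy computation; one should double-check it against the expansion of $\nu(\beta)$ to the required order, since the coefficients beyond the first few are delicate.) Finally, I would invoke Theorem~\ref{thm:k-squares} to conclude that for $k \geq (A\alpha^2/C_1^2)$ — i.e.\ for all large $k$ — one has $\keigopt{k}{A}{\alpha} = \keig{k}{\unionsquare{k}}{\alpha}$, so the same expansion holds for $\keigopt{k}{A}{\alpha}$, as claimed; note the precise value of the $\bo(k^{-2})$ term does not matter.

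The main obstacle is not conceptual but computational: carrying the power-series inversion of $t\tan(t/2)=\beta$ far enough to obtain all five displayed coefficients correctly, and verifying that these agree with~\eqref{eq:optimal-asymptotic}. (The relevant expansions of $\nu(\beta)$ to high order are collected in the Appendix — cf.\ Proposition~\ref{prop:boundkequalsquares} — so in practice one cites those rather than redoing the algebra.) Everything else is immediate from Theorem~\ref{thm:k-squares} and separation of variables.
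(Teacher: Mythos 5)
Your proposal is correct and follows essentially the same route as the paper: Theorem~\ref{thm:k-squares} gives $\keigopt{k}{A}{\alpha}=\keig{k}{\unionsquare{k}}{\alpha}$ for all large $k$, and then one substitutes $a=(A/k)^{1/2}$ into the small-$a$ expansion of the first Robin eigenvalue of an interval (the paper's \eqref{eq:eig1expa}, obtained precisely by the power-series inversion of $\sqrt{\lambda}\tan(a\sqrt{\lambda}/2)=\alpha$ that you describe) and multiplies by $2$. The only trivial slip is the citation: the relevant expansion is \eqref{eq:eig1expa} in the Appendix, not Proposition~\ref{prop:boundkequalsquares}, which contains only the two-sided bounds.
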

\begin{proof} The limit follows from the bounds in Theorem~\ref{thm:k-squares}, while the asymptotic expansion may be obtained from that of the first
 Robin eigenvalue given by~\eqref{eq:eig1expa} by taking $a=(A/k)^{1/2}$ and noting that $a\to 0$ as $k\to\infty$.
\end{proof}

As a further consequence, we can obtain two-sided estimates on the smallest possible value of the sum of the first $k \geq 1$ eigenvalues,
\begin{equation}
\label{eq:sumeigopt}
	\sumeigopt{k}{A}{\alpha} := \inf_\Omega \sum_{j=1}^k \keig{j}{\Omega}{\alpha},
\end{equation}
where the infimum is taken over all disjoint unions $\Omega$ of rectangles such that $|\Omega|=A$, for fixed $A>0$ and $\alpha > 0$. Li and Yau \cite{liyau} famously obtained a sharp lower bound on the corresponding sum in the Dirichlet case, namely $2\pi k^2/A$ (in two dimensions); the lower bound on $\gamma_k$ from \eqref{eq:polya-best} is obtained as a direct consequence of it. Here, $\sumeigopt{k}{A}{\alpha}$ must behave asymptotically like $k^{3/2}$, not $k^2$.
\begin{corx}[Asymptotic behaviour of optimal sums of eigenvalues]
\label{cor:sumeigopt}
Fix $A>0$ and $\alpha>0$. Then
\begin{displaymath}
	\frac{8}{3}A^{-1/2}\alpha \leq \liminf_{k\to\infty} \frac{\sumeigopt{k}{A}{\alpha}}{k^{3/2}} \leq
	\limsup_{k\to\infty} \frac{\sumeigopt{k}{A}{\alpha}}{k^{3/2}} \leq 4A^{-1/2}\alpha.
\end{displaymath}
\end{corx}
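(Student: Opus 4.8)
The plan is to estimate $\sumeigopt{k}{A}{\alpha}$ from above and below using the explicit description of the optimal domains furnished by Theorem~\ref{thm:k-squares}. For the upper bound, the natural test domain for $\sum_{j=1}^k \keig{j}{\,\cdot\,}{\alpha}$ is the disjoint union $\unionsquare{k}$ of $k$ equal squares of total area $A$: its spectrum is the union of $k$ copies of the spectrum of a single square of area $A/k$, so the $j^{\rm th}$ smallest eigenvalue of $\unionsquare{k}$ is at most $\keig{\lceil j/k\rceil}{\sq{}}{\alpha}$ where $\sq{}$ is that square, and in particular $\keig{j}{\unionsquare{k}}{\alpha} \le \keig{1}{\sq{}}{\alpha}$ for all $j=1,\dots,k$ — hence $\sumeigopt{k}{A}{\alpha} \le \sum_{j=1}^k \keig{j}{\unionsquare{k}}{\alpha} \le k\,\keig{1}{\sq{}}{\alpha}$. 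By Corollary~\ref{cor:optimal-asymptotic} (or directly the first term of \eqref{eq:eig1expa} with $a=(A/k)^{1/2}$), $\keig{1}{\sq{}}{\alpha} = \frac{4\alpha}{A^{1/2}}k^{1/2} + O(1)$, so $k\,\keig{1}{\sq{}}{\alpha} = 4A^{-1/2}\alpha\,k^{3/2} + O(k)$, giving the $\limsup$ bound $4A^{-1/2}\alpha$.

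For the lower bound, the point is that for \emph{each} $j \le k$, the domain minimising $\keig{j}{\,\cdot\,}{\alpha}$ among unions of rectangles of area $A$ must be compared against $\keigopt{j}{A}{\alpha}$; since trivially $\sum_{j=1}^k \keig{j}{\Omega}{\alpha} \ge \sum_{j=1}^k \keigopt{j}{A}{\alpha}$ for every admissible $\Omega$, we get $\sumeigopt{k}{A}{\alpha} \ge \sum_{j=1}^k \keigopt{j}{A}{\alpha}$. Now I would split the sum at $j_0 \sim \varepsilon k$ (or more simply handle the whole range): for $j$ in a fixed proportion of $[1,k]$, say $j \ge \delta k$, Theorem~\ref{thm:k-squares} applies provided $\alpha \le C_1 j^{1/2}A^{-1/2}$, which for fixed $\alpha$ holds once $j$ (hence $k$) is large enough, and there the lower bound in \eqref{eq:opt-value-est} gives $\keigopt{j}{A}{\alpha} \ge \frac{4\pi^2 j\alpha}{A^{1/2}\pi^2 j^{1/2}+2A\alpha} = \frac{4\alpha}{A^{1/2}}j^{1/2}\big(1+o(1)\big)$ as $j\to\infty$. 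Summing, $\sum_{j=1}^k \keigopt{j}{A}{\alpha} \ge \frac{4\alpha}{A^{1/2}}\sum_{j=\delta k}^k j^{1/2}(1+o(1))$, and since $\sum_{j=1}^k j^{1/2} = \frac{2}{3}k^{3/2}+O(k)$ while the discarded head $\sum_{j< \delta k} j^{1/2}$ contributes at most $\frac{2}{3}(\delta k)^{3/2}$, dividing by $k^{3/2}$ and letting $k\to\infty$ then $\delta\to 0$ yields $\liminf \sumeigopt{k}{A}{\alpha}/k^{3/2} \ge \frac{4\alpha}{A^{1/2}}\cdot\frac{2}{3} = \frac{8}{3}A^{-1/2}\alpha$.

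The only genuine care needed is in the lower bound: one must ensure the condition $\alpha \le C_1 j^{1/2}A^{-1/2}$ of Theorem~\ref{thm:k-squares} is eventually met for the bulk of the indices $j$, which is immediate for fixed $\alpha$ since it fails only for the $O(\alpha^2 A)$ smallest values of $j$ — a set of indices contributing $O(1)$ to $\sum j^{1/2}$, hence negligible after division by $k^{3/2}$. (For the very smallest $j$ one may simply use $\keigopt{j}{A}{\alpha}\ge 0$, or the crude bound $\keigopt{1}{A}{\alpha}\ge\keig{1}{B}{\alpha}>0$ via Bossel--Daners, whichever is cleaner.) Everything else is the elementary asymptotics $\sum_{j=1}^k j^{1/2} = \tfrac{2}{3}k^{3/2}+O(k)$ together with the sandwich already recorded in \eqref{eq:opt-value-est}; I expect no real obstacle, and the gap between the constants $\tfrac{8}{3}$ and $4$ simply reflects the gap between the two bounds in \eqref{eq:opt-value-est}, which is not closed here.
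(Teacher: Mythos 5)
Your proposal is correct and follows essentially the same route as the paper: the upper bound via the test domain $\unionsquare{k}$ (whose first $k$ eigenvalues all equal $\keig{1}{\sq{(A/k)^{1/2}}}{\alpha}\leq 4k^{1/2}\alpha/A^{1/2}$), and the lower bound via $\sumeigopt{k}{A}{\alpha}\geq\sum_{j=1}^k\keigopt{j}{A}{\alpha}$ combined with the $j^{1/2}$ behaviour of $\keigopt{j}{A}{\alpha}$ and $\sum_{j\leq k}j^{1/2}\sim\tfrac{2}{3}k^{3/2}$. The only cosmetic difference is that the paper extracts a uniform bound $\keigopt{j}{A}{\alpha}\geq 4A^{-1/2}\alpha\,j^{1/2}-m_1$ from Corollary~\ref{cor:optimal-asymptotic}, whereas you use the explicit lower bound in \eqref{eq:opt-value-est} and discard the finitely many small $j$ for which its hypothesis fails; both are fine.
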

\begin{remark}
The method of proof of this result, which we give in Section~\ref{sec:sums}, can be easily extended to provide explicit bounds for $\sumeigopt{k}{A}{\alpha}$ for any given $k$.
\end{remark}

As we saw in Theorem~\ref{thm:polya}, even if we restrict our attention just to \emph{rectangles} rather than disjoint unions, then we
still have that the growth
of the extremal eigenvalues in $k$ lies below the corresponding Weyl asymptotics. Moreover, now the corresponding sequence of optimisers is unbounded with
the ratio of side lengths tending to infinity with $k$. The following theorem also gives the rate at which this happens. Here, we set
\begin{displaymath}
	\keigoptrect{k}{A}{\alpha} := \inf\{\keig{k}{\Omega}{\alpha}: \Omega \subset \R^2 \text{ is a rectangle, } |\Omega|=A\},
\end{displaymath}
and denote by $A^{1/2}a_k^\ast$ the longer side of the rectangle yielding the optimum, for fixed $\alpha$.

\begin{thmx}[Optimality and asymptotic behaviour of rectangles]
\label{thm:rectangles}
For any fixed $A>0$, $\alpha>0$ and $k\geq 2$, there exists a positive constant $C_3$ such that
\begin{displaymath}
	\fr{3\pi^2\alpha^{2/3}}{A^{2/3}\left(\pi^2+2A^{1/2}\alpha\right)^{2/3}} (k-2)^{2/3} \leq \keigoptrect{k}{A}{\alpha} 
	\leq \fr{3\pi^{2/3}\alpha^{2/3}}{A^{2/3}} k^{2/3},
\end{displaymath}
the lower bound holding whenever $\alpha \leq C_3 k^{1/2}$. In particular, there exists a positive constant $C_4=C_4(\alpha,A)$ such that
$\keigoptrect{k}{A}{\alpha}\geq C_4k^{2/3}$ for all $k\geq 3$. In addition,
\begin{displaymath}
	\lim_{k\to\infty} \frac{\keigoptrect{k}{A}{\alpha}}{k^{2/3}} = 3\left(\fr{\pi\alpha}{A}\right)^{2/3}.
\end{displaymath}
Moreover, there exist constants $c_1,c_2$ depending on $\alpha$ such that $c_1 k^{2/3} \leq a_k^\ast \leq c_2 k^{2/3}$ for all $k\geq 1$.
\end{thmx}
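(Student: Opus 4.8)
The plan is to reduce everything to a careful analysis of the eigenvalue-counting function for a single rectangle, exploiting the fact that the Robin Laplacian on a rectangle $(0,a)\times(0,b)$ separates variables, so that $\keig{k}{\Omega}{\alpha}$ is obtained by ordering the sums $\mu_m(a)+\mu_n(b)$, where $\mu_m(\ell)$ denotes the $m$-th one-dimensional Robin eigenvalue on an interval of length $\ell$ with parameter $\alpha$. The key one-dimensional input is the two-sided bound on $\mu_m(\ell)$ that must already have been established (or is elementary from the transcendental equation): roughly $\mu_m(\ell)$ behaves like $(m-1)^2\pi^2/\ell^2$ for large $m$, with a clean lower bound of the shape $\mu_m(\ell)\ge \pi^2(m-1)^2/\ell^2\cdot\bigl(\pi^2/(\pi^2+2\alpha\ell)\bigr)$-type and the sharp small-$m$ behaviour $\mu_1(\ell)\sim\alpha/\ell$ for $\alpha\ell$ small, governed by \eqref{eq:eig1expa}.

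First I would prove the \textbf{upper bound} by exhibiting a good test rectangle. Take the rectangle with longer side $A^{1/2}a$ and shorter side $A^{1/2}/a$ for a suitable $a=a(k)$ growing like $k^{2/3}$; on the short side only the ground mode $\mu_1$ is cheap, on the long side the first $\sim k$ modes $\mu_1(A^{1/2}a),\dots,\mu_k(A^{1/2}a)$ are all available below roughly $k^2\pi^2/(A a^2)$. Choosing $a$ to balance the two contributions — the ``width penalty'' $\mu_1(A^{1/2}/a)\approx \alpha a/A^{1/2}$ against $\mu_k(A^{1/2}a)\approx \pi^2 k^2/(Aa^2)$ — forces $a\sim k^{2/3}$ and gives $\keigoptrect{k}{A}{\alpha}\lesssim 3(\pi\alpha/A)^{2/3}k^{2/3}$ after optimising; the constant $3$ comes out of minimising $x+ 2/x^{2}$-type expressions, i.e. from $\min_{a}\bigl(\alpha a/A^{1/2}+\pi^2k^2/(Aa^2)\bigr)$. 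Making this precise with the actual $\mu_m$ rather than their asymptotics, and tracking the error terms, yields the explicit bound $3\pi^{2/3}\alpha^{2/3}A^{-2/3}k^{2/3}$, possibly sharpened using the Appendix estimates as in the remark after Theorem~\ref{thm:k-squares}.

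For the \textbf{lower bound} and the optimality statement, I would argue by a counting/packing estimate: for a rectangle of area $A$ with sides $A^{1/2}a$ and $A^{1/2}/a$, the number of separated eigenvalues lying below a threshold $\Lambda$ is at most the number of pairs $(m,n)$ with $\mu_m(A^{1/2}a)+\mu_n(A^{1/2}/a)<\Lambda$, and using the one-dimensional lower bounds this is bounded by a product of two one-dimensional counting functions, giving something like $N(\Lambda)\lesssim \bigl(1+A^{1/2}a\sqrt{\Lambda}/\pi\bigr)\bigl(1+A^{1/2}\sqrt{\Lambda}/(\pi a)\bigr)$ with $\alpha$-dependent corrections from the $\pi^2/(\pi^2+2\alpha\ell)$ factors. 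If $\keigoptrect{k}{A}{\alpha}<C_4 k^{2/3}$ for a too-small $C_4$, then $N(C_4k^{2/3})\ge k$ must hold for \emph{some} rectangle; optimising the right-hand side over the aspect ratio $a$ — the optimum again occurring near $a\sim k^{2/3}$, where one of the factors is $O(1)$ (only the ground transverse mode counts) and the other is $O(k)$ — produces a contradiction unless $C_4\ge 3\pi^2\alpha^{2/3}/(A^{2/3}(\pi^2+2A^{1/2}\alpha)^{2/3})\cdot(\text{shift})$, with the $(k-2)$ shift absorbing the additive $1$'s in the counting function and the requirement $\alpha\le C_3k^{1/2}$ ensuring the $\alpha$-perturbation of the transverse ground mode does not overwhelm the leading term. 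The limit $\lim_k \keigoptrect{k}{A}{\alpha}/k^{2/3}=3(\pi\alpha/A)^{2/3}$ then follows by squeezing the two bounds (the lower constant tends to the upper one is \emph{not} automatic, so here I would instead refine the lower bound asymptotically: once we know the optimal aspect ratio is $a_k^\ast\sim ck^{2/3}$, plug it back into the exact separated spectrum and use \eqref{eq:eig1expa} to get matching leading terms). Finally, the two-sided bound $c_1k^{2/3}\le a_k^\ast\le c_2 k^{2/3}$ drops out of the optimisation: any aspect ratio with $a\ll k^{2/3}$ makes the transverse penalty $\mu_1(A^{1/2}/a)$ negligible but forces too many longitudinal modes to fit under any sub-$k^{2/3}$ threshold (so $\keig{k}{}{}$ is too large), while $a\gg k^{2/3}$ makes $\mu_1(A^{1/2}/a)\gtrsim \alpha a/A^{1/2}\gg k^{2/3}$ already from the first mode — so the minimiser is pinned to the scale $k^{2/3}$.

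The main obstacle I anticipate is the \textbf{lower bound for rectangles}, specifically getting the constant to be exactly $3\pi^2\alpha^{2/3}/(A^{2/3}(\pi^2+2A^{1/2}\alpha)^{2/3})$ with the clean $(k-2)^{2/3}$ shift: this requires the one-dimensional Robin counting estimate to be sharp enough (the factor $\pi^2/(\pi^2+2\alpha\ell)$ is what degrades the Dirichlet bound, and one must check it is applied on the \emph{long} side where $\ell=A^{1/2}a$ is large, so that $2\alpha\ell$ can dominate $\pi^2$ unless $\alpha\le C_3 k^{1/2}$), and because the optimisation over $a$ of the resulting bilinear counting bound is genuinely two-dimensional — unlike the Dirichlet case there is no scaling invariance to collapse it. A secondary difficulty is that, because the lower and upper constants in the finite-$k$ bounds differ, establishing the \emph{exact} limit $3(\pi\alpha/A)^{2/3}$ needs the asymptotic refinement of the lower bound described above rather than a naive squeeze; this is where Corollary~\ref{cor:optimal-asymptotic}'s method (feeding $a=(A/k)^{1/2}$-type scalings into the one-rectangle expansion \eqref{eq:eig1expa}) will be reused, now with $a=A^{1/2}a_k^\ast$ on the long side and the transverse mode expanded to first order in $\alpha A^{1/2}/a_k^\ast\to 0$.
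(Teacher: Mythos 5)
Your upper bound is exactly the paper's: both arguments optimise the $(k,1)$ mode $\eigmode{k}{1}{\rect{a}{A}}{\alpha}$ over $a$ with the Ansatz $a\sim k^{2/3}$, balancing $\keig{k}{\interv{A^{1/2}a}}{\alpha}\leq \pi^2k^2/(Aa^2)$ against the transverse penalty $\keig{1}{\interv{A^{1/2}/a}}{\alpha}\leq 2\alpha a/A^{1/2}$ (note the factor $2$, which your sketch drops but which is needed to land on the constant $3\pi^{2/3}$). For the lower bound you take a genuinely different route. The paper splits into three regimes: for $a\geq k^{1/2}$ it proves via the fundamental-gap inequality that the $k^{\rm th}$ eigenvalue \emph{is} the $(k,1)$ mode and bounds that mode below using the nodal-domain decomposition (whence the $(k-2)^{2/3}$); for $C(A,\alpha)k^{1/3}\leq a\leq k^{1/2}$ it shows the $(1,2)$ mode already exceeds the target; and only for $a\leq C(A,\alpha)k^{1/3}$ does it count eigenvalues, and there only with the crude Neumann lattice bound, which is where $\alpha\leq C_3k^{1/2}$ enters. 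You propose instead a single counting argument over all aspect ratios. This can be made to work and even reproduces the sharp constant: setting $\beta:=2\pi^2\alpha/(A^{1/2}(\pi^2+2A^{1/2}\alpha))$, so that $\keig{1}{\interv{A^{1/2}/a}}{\alpha}\geq\beta a$ for $a\geq1$, the dominant part of the count is $1+A^{1/2}a\sqrt{(\Lambda-\beta a)_+}/\pi$, whose maximum over $a>0$ is $2A^{1/2}\Lambda^{3/2}/(3^{3/2}\pi\beta)$; this is at most $k$ precisely when $\Lambda\leq 3\pi^2\alpha^{2/3}A^{-2/3}(\pi^2+2A^{1/2}\alpha)^{-2/3}k^{2/3}$, i.e.\ the paper's constant. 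What this buys is a uniform treatment of all aspect ratios; what it costs is that the error terms must now be absorbed globally rather than regime by regime.

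The one point where your sketch as written would fail is the form of the counting bound. The literal product $N(\Lambda)\lesssim(1+A^{1/2}a\sqrt{\Lambda}/\pi)(1+A^{1/2}\sqrt{\Lambda}/(\pi a))$ does \emph{not} give $N(\Lambda)\leq k$ for long thin rectangles: at $\Lambda\sim k^{2/3}$ and $a\gg k^{2/3}$ the first factor alone exceeds $k$, because a Neumann-type count is blind to the transverse Robin penalty. You must count pairs with the threshold \emph{shifted}, e.g.\ $N(\Lambda)\leq N_{\rm long}\bigl(\Lambda-\keig{1}{\interv{A^{1/2}/a}}{\alpha}\bigr)+\sum_{n\geq 2}N_{\rm long}\bigl(\Lambda-\pi^2(n-1)^2a^2/A\bigr)$, so that the count vanishes once $\beta a>\Lambda$. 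Your closing remark about $a\gg k^{2/3}$ shows you see the issue, but the shift is not a ``correction'': the entire sharp constant comes from the $\Lambda-\beta a$ under the square root. The remaining error terms (the additive $1$'s and the $n\geq2$ sum, of total size $O(\alpha^{2/3}k^{2/3})$, not an effect on the transverse ground mode as you suggest) are exactly what force $\alpha\lesssim k^{1/2}$, matching the paper's condition \eqref{eq:rectangles-rectangles-comparison}. Also note that $\keig{m}{\interv{\ell}}{\alpha}\geq\pi^2(m-1)^2/\ell^2$ holds with no degrading factor for $m\geq2$ by monotonicity in $\alpha$; the factor $\pi^2/(\pi^2+2\alpha\ell)$ is only relevant for $m=1$. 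Your plan for the limit (refine the lower bound once $a_k^\ast\sim k^{2/3}$ is known) is the paper's $\varepsilon$-refinement in Lemma~\ref{lem:k-1-mode-bounds}, and your pinning of $a_k^\ast$ to the scale $k^{2/3}$ by the two-sided bounds is likewise the paper's argument.
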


An explicit estimate for the constant $C_3$ is given in \eqref{eq:rectangles-rectangles-comparison}; asymptotically valid (for large $k$) bounds on $c_1$ and $c_2$ are given in \eqref{eq:c1} and \eqref{eq:a-upper-bound}, respectively.

All of the above is in sharp contrast with what happens in the Dirichlet and Neumann cases, where the extremal sets converge to a fixed domain as $k$ becomes large~\cite{anfr2,anfr1}.

In the next section we introduce notation and recall some basic properties about Robin eigenvalues. We then establish the existence
of minimisers and prove that the square minimises the first eigenvalue among unions of rectangles with fixed area in Sections~\ref{sec:rectangles}
and~\ref{sec:isoperimetric}, respectively. As far as we are aware, the latter result has not previously appeared in the literature and due to lack of explicit
solutions is slightly more complicated to prove that its Dirichlet or Neumann counterparts. Before moving on to the proofs of Theorems~\ref{thm:polya}--\ref{thm:rectangles},
we show that, for each fixed $k\geq 1$, the Robin minimisers do in fact converge to Dirichlet minimisers as $\alpha$ goes to infinity, thus behaving in a way similar to the eigenvalues themselves
in this respect (Section~\ref{sec:convergence-to-dirichlet}). The proofs of the main results are then given in Section~\ref{sec:opt-rect} (rectangles, Theorem~\ref{thm:rectangles})
and Section~\ref{sec:uk-in-eu} (unions of rectangles, Theorems~\ref{thm:k-squares} and~\ref{thm:brexit}). 
In Section~\ref{sec:sums}, we give the proof of Corollary~\ref{cor:sumeigopt} as well as some further remarks on the problem of minimising the sum of the first $k$ Robin eigenvalues. In Section~\ref{sec:higher-dimension}, we briefly discuss higher-dimensional versions of these results: the principles should be the same, and we indicate how the exponents of interest should depend on the dimension.
Finally, in the Appendix we collect several sharp estimates for the eigenvalues of the Robin
problem on a bounded interval which are used throughout the text and which we believe to be useful in their own right.
We draw particular attention to the asymptotic behaviour of the first and second Robin eigenvalues of an interval as its length $a$ tends to zero,
which are of orders $a^{-1}$ and $a^{-2}$, respectively; it is this differentiating behaviour that will drive many of our results.

\section{Notation and basic properties of the Robin Laplacian}
\label{sec:notation}

Let $\Omega \subset \R^d$, $d\geq 1$, be a bounded, not necessarily connected, domain with Lipschitz boundary $\partial\Omega$.
For given $\alpha>0$, we will be interested in the eigenvalues
\begin{displaymath}
	0 < \keig{1}{\Omega}{\alpha} \leq \keig{2}{\Omega}{\alpha} \leq \ldots \to \infty
\end{displaymath}
of the Robin Laplacian, namely the operator on $L^2(\Omega)$ formally associated with the sesquilinear form $q_\alpha: H^1(\Omega) \times H^1(\Omega) \to \mathbb{C}$ given by
\begin{displaymath}
	q_\alpha (u,v) = \int_\Omega \nabla u\cdot \overline{\nabla v}\,\textrm{d}x+\alpha\int_{\partial\Omega} u\overline{v}\,\textrm{d}\sigma,
\end{displaymath}
see, e.g., \cite[Section~4.2]{bufrke} for more details. The Neumann Laplacian corresponds to $\alpha=0$, while the Dirichlet Laplacian is formally obtained for $\alpha = \infty$; we will thus also write
\begin{displaymath}
	\keign{k}{\Omega} = \mu_k (\Omega) \quad \text{and} \quad \keigd{k}{\Omega} = \gamma_k (\Omega)
\end{displaymath}
for the Neumann and Dirichlet eigenvalues, respectively, where $\keign{1}{\Omega}=0$. We also recall the following standard continuity result with respect to $\alpha$
again, see, e.g., \cite[Section~4.2]{bufrke}).

\begin{lemma}
\label{lem:parameter-continuity}
Let $\Omega \subset \R^d$, $d\geq 1$, be Lipschitz. Then for each $k\geq 1$, the mapping
\begin{displaymath}
	\alpha \mapsto \keig{k}{\Omega}{\alpha}
\end{displaymath}
is a continuous and monotonically increasing function of $\alpha \in [0,\infty]$. In particular, $\keig{k}{\Omega}{\alpha} \to \keign{k}{\Omega} = \mu_k (\Omega)$ from above as $\alpha \to 0$, and $\keig{k}{\Omega}{\alpha} \to \keigd{k}{\Omega} = \gamma_k (\Omega)$ from below as $\alpha \to \infty$.
When $k=1$ and $\Omega$ is connected, this function is analytic with strictly negative second derivative everywhere.
\end{lemma}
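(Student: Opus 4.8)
The plan is to treat the three assertions in turn, relying throughout on the variational characterisation of the eigenvalues via the form $q_\alpha$ and on the fact that the form domain $H^1(\Omega)$ is independent of $\alpha$. First, for monotonicity on $[0,\infty)$: since $q_\alpha(u,u) = \int_\Omega |\nabla u|^2\,\textrm{d}x + \alpha \int_{\partial\Omega}|u|^2\,\textrm{d}\sigma$ is, for each fixed $u\in H^1(\Omega)$, a nondecreasing (indeed affine, with nonnegative slope) function of $\alpha\ge 0$, the min--max formula
\[
\keig{k}{\Omega}{\alpha} = \min_{\substack{V\subset H^1(\Omega)\\ \dim V = k}}\ \max_{0\neq u\in V}\ \frac{q_\alpha(u,u)}{\|u\|_{L^2(\Omega)}^2}
\]
immediately gives $\keig{k}{\Omega}{\alpha}\le \keig{k}{\Omega}{\beta}$ for $\alpha\le\beta$, since every competitor subspace yields a nondecreasing Rayleigh quotient. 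Strict monotonicity is not claimed, so nothing more is needed here (it can in fact fail, e.g.\ for eigenfunctions with vanishing trace).

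For continuity on $[0,\infty)$, I would argue that $\alpha\mapsto q_\alpha$ is a real-analytic (in fact affine) family of closed, nonnegative, densely defined forms with common form domain, hence $\{\alpha\mapsto$ Robin Laplacian$\}$ is a self-adjoint holomorphic family of type (B) in the sense of Kato; each eigenvalue $\keig{k}{\Omega}{\alpha}$ then depends continuously on $\alpha\in[0,\infty)$ (one may also give an elementary direct proof: from the min--max formula, $|\keig{k}{\Omega}{\alpha}-\keig{k}{\Omega}{\beta}| \le |\alpha-\beta|\sup\{\int_{\partial\Omega}|u|^2\,\textrm{d}\sigma : \|u\|_{L^2}=1,\ q_{\alpha_0}(u,u)\le\keig{k}{\Omega}{\alpha_0}+1\}$ for $\alpha,\beta$ near $\alpha_0$, and the supremum is finite by the compactness of the trace embedding $H^1(\Omega)\hookrightarrow L^2(\partial\Omega)$ for Lipschitz $\Omega$). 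The limit $\keig{k}{\Omega}{\alpha}\to\mu_k(\Omega)$ from above as $\alpha\to 0$ is then just continuity at $\alpha = 0$ together with monotonicity. For the limit as $\alpha\to\infty$: monotonicity shows $\keig{k}{\Omega}{\alpha}$ increases to some limit $\ell_k\le\gamma_k(\Omega)$; to identify $\ell_k = \gamma_k(\Omega)$ I would use that a normalised sequence of $k$-dimensional spaces of low-energy Robin eigenfunctions has $\int_{\partial\Omega}|u|^2\,\textrm{d}\sigma \le \keig{k}{\Omega}{\alpha}/\alpha \to 0$, extract a weak $H^1$-limit whose trace vanishes (so it lies in $H^1_0(\Omega)$), and compare Rayleigh quotients to conclude $\ell_k\ge\gamma_k(\Omega)$; the reverse inequality is monotonicity. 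This is the step I expect to require the most care, since one must control the whole $k$-dimensional eigenspace, not just the first eigenvalue, and verify that the weak limits remain linearly independent — the standard fix is to work with orthonormalised eigenfunctions and pass to the limit in the Gram matrix.

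Finally, for the analyticity and concavity statement when $k=1$ and $\Omega$ is connected: on a connected domain the first Robin eigenvalue is simple (the first eigenfunction can be taken strictly positive, by a standard Perron--Frobenius / strong maximum principle argument for the associated positivity-improving semigroup), so analytic perturbation theory for the holomorphic family of type (B) gives that $\alpha\mapsto\keig{1}{\Omega}{\alpha}$ is real-analytic on $(0,\infty)$ with an analytic choice of normalised eigenfunction $u_\alpha$. Differentiating the eigenvalue equation and using the Feynman--Hellmann formula gives $\frac{d}{d\alpha}\keig{1}{\Omega}{\alpha} = \int_{\partial\Omega}|u_\alpha|^2\,\textrm{d}\sigma > 0$, which recovers strict monotonicity in this case, and then a second differentiation (or, equivalently, the fact that $\keig{1}{\Omega}{\alpha}$ is an infimum of affine functions of $\alpha$ and hence concave, combined with analyticity to rule out vanishing of the second derivative) yields $\frac{d^2}{d\alpha^2}\keig{1}{\Omega}{\alpha} < 0$ everywhere on $(0,\infty)$; the strictness comes from the fact that the derivative $\int_{\partial\Omega}|u_\alpha|^2\,\textrm{d}\sigma$ is genuinely non-constant in $\alpha$ unless $u_\alpha$ has vanishing trace, which is impossible since $u_\alpha>0$ on $\overline\Omega$.
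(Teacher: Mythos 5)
The paper does not actually prove this lemma: it is stated as a ``standard continuity result'' with a citation to \cite[Section~4.2]{bufrke}, so there is no in-paper argument to compare routes with. Your strategy is the standard one and the first three parts are sound: monotonicity from min--max and the affine, nondecreasing dependence of $q_\alpha(u,u)$ on $\alpha$; local Lipschitz continuity from that same affine dependence together with boundedness of the trace map on a Lipschitz domain; the $\alpha\to0$ limit from continuity at $0$; and the $\alpha\to\infty$ limit by the compactness argument you outline (orthonormalising the eigenfunctions, using the compact embedding $H^1(\Omega)\hookrightarrow L^2(\Omega)$ to pass to the limit in the Gram matrix, and the compact trace map to see that the limits lie in $H^1_0(\Omega)$ is exactly the right way to control the whole $k$-dimensional space).

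The one genuine gap is the final claim that $\frac{d^2}{d\alpha^2}\keig{1}{\Omega}{\alpha}<0$ \emph{everywhere}. Concavity (infimum of affine functions) gives $\lambda_1''\le 0$, and analyticity does rule out $\lambda_1''\equiv 0$ (an affine $\lambda_1$ with $\lambda_1(0)=0$ and a finite limit as $\alpha\to\infty$ would have to vanish identically), but an analytic, concave, non-affine function can still have isolated zeros of its second derivative (e.g.\ $\alpha\mapsto-\alpha^4$), so ``analyticity rules out vanishing of the second derivative'' is not a valid inference, and ``the derivative $\int_{\partial\Omega}|u_\alpha|^2\,\mathrm{d}\sigma$ is genuinely non-constant'' only excludes $\lambda_1''\equiv0$, not $\lambda_1''(\alpha_0)=0$ at some single $\alpha_0$. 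To close this you need the second-order perturbation formula: with $\dot q(u,v)=\int_{\partial\Omega}u\bar v\,\mathrm{d}\sigma$ one has $\lambda_1''(\alpha)=-2\sum_{j\ge2}|\dot q(u_1,u_j)|^2/(\lambda_j(\alpha)-\lambda_1(\alpha))$, which vanishes only if $\int_{\partial\Omega}u_1u_j\,\mathrm{d}\sigma=0$ for all $j\ge2$. Since finite linear combinations of the $u_j$ are dense in $H^1(\Omega)$ for the form norm, this would force the bounded functional $v\mapsto\int_{\partial\Omega}u_1v\,\mathrm{d}\sigma$ to coincide with $\bigl(\int_{\partial\Omega}u_1^2\,\mathrm{d}\sigma\bigr)\langle v,u_1\rangle_{L^2(\Omega)}$ on all of $H^1(\Omega)$, which is impossible: the left-hand side vanishes on $C_c^\infty(\Omega)$ while the right-hand side does not, and $\int_{\partial\Omega}u_1^2\,\mathrm{d}\sigma>0$ because $u_1>0$ on $\overline\Omega$. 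With that step inserted, the proof is complete.
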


The concrete choices of $\Omega$ which will be most relevant for us in the sequel will be denoted as follows:
\begin{center}
\begin{tabular}{cl}
	$\interv{a}$ & any interval of length $a$;\\
	$\rect{a}{A}$ & any rectangle of area $A$ and side lengths $A^{1/2}a$ and $A^{1/2}/a$;\\
	$\sq{a}=\sq{\sqrt{A}}$ & a square of side length $a$ and area $a^2=A$;\\
	$\unionsquare{k}$ & the disjoint union of $k$ equal squares of pre-specified area $A$.
\end{tabular}
\end{center}
We refer to Appendix~\ref{sec:interval} for a number of estimates on the eigenvalues $\keig{k}{\interv{a}}{\alpha}$ and $\keig{k}{\rect{a}{A}}{\alpha}$, in particular for small $k$, as well as a description of their asymptotic behaviour in certain parameter ranges.

If $\Omega \subset \R^d$ is any domain, we denote by
\begin{displaymath}
	t\Omega = \{tx: x \in \Omega\}
\end{displaymath}
its homothetic scaling by a factor of $t>0$; then we have the relation $|t\Omega|=t^d|\Omega|$. The eigenvalues of both the Dirichlet and Neumann Laplacians scale well with respect to homothetic scalings of the domain, and this property plays a prominent role in P\'olya's proofs: for any $t>0$ and any $k\geq 1$, we have
\begin{displaymath}
	\keigd{k}{\Omega}=t^{2/d}\keigd{k}{t\Omega} \quad \text{and} \quad \keign{k}{\Omega}=t^{2/d}\keign{k}{t\Omega}.
\end{displaymath}
In the Robin case, we have instead, for any given $\alpha>0$ and $k\geq 1$,
\begin{equation}
\label{eq:homothetic-scaling}
	\keig{k}{\Omega}{\alpha} = t^{2/d}\keig{k}{t\Omega}{\alpha/t},
\end{equation}
cf.~\cite[Section~4.2.1]{bufrke}, also for a discussion of some of the consequences of \eqref{eq:homothetic-scaling}. Before proceeding, for future reference we note how homothetic scalings affect domain minimisation properties.

\begin{lemma}
\label{lem:optimal-scaling}
Fix $A,B>0$. Suppose $\family$ is a family of domains in $\R^2$ such that $|\Omega|=A$ for all $\Omega \in \family$. Consider the family of scaled domains
\begin{displaymath}
	\secfamily := \left\{ B^{-1/2}\Omega:\Omega \in \family \right\}
\end{displaymath}
(so that $|\Omega|=B$ for all $\Omega \in \secfamily$).
\begin{enumerate}
\item If for some $k\geq 1$ and $\alpha_k>0$ there exists $\Omega^A_k \in \family$ such that
\begin{equation}
\label{eq:scaling-opt1}
	\keig{k}{\Omega^A_k}{\alpha_k} = \inf \{ \keig{k}{\Omega}{\alpha_k}: \Omega \in \family\},
\end{equation}
then the scaled domain $\Omega^B_k := B^{-1/2}\Omega^A_k \in \secfamily$ satisfies
\begin{equation}
\label{eq:scaling-opt2}
	\keig{k}{\Omega^B_k}{A^{1/2}B^{-1/2}\alpha_k} = \inf \{ \keig{k}{\Omega}{A^{1/2}B^{-1/2}\alpha_k}: \Omega \in \secfamily\}
\end{equation}
\item If in (1) property \eqref{eq:scaling-opt1} holds for all $\alpha \in (0,\alpha_k]$, then
\begin{displaymath}
	\keig{k}{\Omega^B_k}{\alpha} = \inf \{ \keig{k}{\Omega}{\alpha}: \Omega \in \secfamily\}
\end{displaymath}
for all $\alpha \in (0,A^{1/2}B^{-1/2}\alpha_k]$.
\end{enumerate}
\end{lemma}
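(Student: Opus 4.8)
The plan is to read the whole statement off the homothetic scaling identity \eqref{eq:homothetic-scaling}, which in dimension $d=2$ reduces to $\keig{k}{\Omega}{\alpha} = t\,\keig{k}{t\Omega}{\alpha/t}$ for every $t>0$ and every $k\geq 1$. The key observation is that, because $t>0$, this identity turns the minimisation of $\keig{k}{\,\cdot\,}{\alpha_k}$ over $\family$ into the minimisation of $\keig{k}{\,\cdot\,}{\alpha_k/t}$ over the rescaled family $\secfamily$: multiplying a chain of inequalities between eigenvalues by the fixed positive constant $t$ does not change it. Let $t$ be the homothety factor carrying area-$A$ domains onto area-$B$ domains, so that $\secfamily = \{t\Omega:\Omega\in\family\}$ and $\Omega^B_k = t\,\Omega^A_k$; a one-line computation from $|t\Omega| = t^2|\Omega|$ gives $\alpha_k/t = A^{1/2}B^{-1/2}\alpha_k$, which is precisely the parameter occurring in \eqref{eq:scaling-opt2}.

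For part (1), I would fix an arbitrary $\Omega\in\family$ and apply the scaling identity with this $t$ to each of $\Omega^A_k$ and $\Omega$. By \eqref{eq:scaling-opt1} we have $\keig{k}{\Omega^A_k}{\alpha_k}\leq \keig{k}{\Omega}{\alpha_k}$; substituting $\keig{k}{\Omega^A_k}{\alpha_k}=t\,\keig{k}{\Omega^B_k}{\alpha_k/t}$ and $\keig{k}{\Omega}{\alpha_k}=t\,\keig{k}{t\Omega}{\alpha_k/t}$ and then dividing through by $t$ gives $\keig{k}{\Omega^B_k}{\alpha_k/t}\leq \keig{k}{t\Omega}{\alpha_k/t}$. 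Since $t\Omega$ ranges over all of $\secfamily$ as $\Omega$ ranges over $\family$, taking the infimum over $\Omega\in\family$ yields exactly \eqref{eq:scaling-opt2}, the infimum being attained at $\Omega^B_k\in\secfamily$.

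For part (2), this computation goes through verbatim with $\alpha_k$ replaced by any $\alpha\in(0,\alpha_k]$, since \eqref{eq:scaling-opt1} is assumed to hold for all such $\alpha$; it shows that $\Omega^B_k$ minimises $\keig{k}{\,\cdot\,}{\alpha/t}$ over $\secfamily$. It then remains only to note that $\alpha\mapsto \alpha/t = A^{1/2}B^{-1/2}\alpha$ is an increasing bijection of $(0,\infty)$, so that as $\alpha$ runs through $(0,\alpha_k]$ the rescaled parameter runs through exactly $(0,A^{1/2}B^{-1/2}\alpha_k]$, which is the claimed range.

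There is no real obstacle here: the argument is a direct manipulation of \eqref{eq:homothetic-scaling}. The only points that need care are bookkeeping ones, namely keeping straight that under $\Omega\mapsto t\Omega$ the Robin parameter transforms as $\alpha\mapsto\alpha/t$ (so that shrinking a domain is equivalent to increasing the effective boundary parameter), and checking that the area normalisations of $\family$ and $\secfamily$ are matched consistently with the chosen $t$ so that $\alpha_k/t$ really equals $A^{1/2}B^{-1/2}\alpha_k$. No relabelling of eigenvalues is needed, since the homothety scales every eigenvalue by the same factor $t^{2/d}$ and hence preserves their ordering.
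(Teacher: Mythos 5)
Your argument is correct and is essentially the paper's own proof: both parts follow by applying the homothetic scaling relation \eqref{eq:homothetic-scaling} to the optimiser and to an arbitrary competitor, cancelling the common positive prefactor, and noting that the Robin parameter transforms as $\alpha\mapsto A^{1/2}B^{-1/2}\alpha$, so that part (2) is immediate from part (1). Your observation that only the positivity of the scaling factor matters (not its exact power) is exactly the right bookkeeping point, as is your matching of the normalisation of $\secfamily$ to the area condition $|\Omega|=B$.
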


\begin{proof}
(1) This follows directly from the scaling relation
\begin{displaymath}
	\keig{k}{t\Omega}{\alpha} = t^{-2}\keig{k}{\Omega}{t\alpha} \geq t^{-2}\keig{k}{\Omega^A_k}{\alpha_k}
\end{displaymath}
for all $\Omega \in \family$, provided $t>0$ and $\alpha>0$ are chosen such that $t\alpha = \alpha_k$. Now choose $t=A^{1/2}B^{-1/2}$ to guarantee the area condition.

(2) follows immediately from (1).
\end{proof}

Likewise, there is still a principle of Wolf--Keller type (cf.~\cite[Section~8]{woke}) which characterises disjoint minimisers among a given family, see \cite[Theorem~2.4]{anfrke}, but in the Robin case things are once again complicated by the scaling relation \eqref{eq:homothetic-scaling}. We will now recall the result from \cite{anfrke} in the form in which we will need it -- actually, while \cite{anfrke} only considered general unions of Lipschitz domains, the result is still true within smaller classes of domains:

\begin{definition}
\label{def:admissible-family}
Let $\family$ be a collection of planar domains and fix $A>0$. We call $\family$ an \emph{admissible family} (for the value $A$, for short simply \emph{admissible}) if every domain in $\family$ has area $A$ and, if $\Omega_1,\ldots,\Omega_n \in \family$ is any finite collection of connected domains in $\family$, then the disjoint union
\begin{displaymath}
	\Omega:= t_1\Omega_1 \cup t_2\Omega_2 \cup \ldots \cup t_n\Omega_n
\end{displaymath}
is in $\family$ whenever the scaling factors $t_1,\ldots,t_n \in [0,1]$ are chosen such that $|\Omega|=A$, i.e., whenever $t_1^2+\ldots+t_n^2=1$.
\end{definition}

Thus an admissible family contains all possible finite disjoint unions of the connected domains in it. For example, the set of all bounded, Lipschitz domains in $\R^2$ of given area $A$ forms an admissible family, as does the set of all finite disjoint unions of rectangles of area $A$, and the set of all finite disjoint unions of disks of area $A$. We will now state a simplified version of \cite[Theorem~2.4]{anfrke} adapted to our needs, noting that the proof of \cite[Theorem~2.4]{anfrke}, ostensibly for all Lipschitz domains, may be repeated verbatim for any admissible family.
\begin{lemma}[Wolf--Keller principle for the Robin problem]
\label{lem:wolf-keller}
Suppose $\family$ is an admissible family of planar domains for some $A>0$ in the sense of Definition~\ref{def:admissible-family} and suppose the disjoint set $\Omega^\ast = \Omega_1 \cup \Omega_2 \in \family$ achieves $\inf \{ \keig{k}{\Omega}{\alpha}: \Omega \in \family \}$ for some $k\geq 2$. Then there exists some $i=1,\ldots,k-1$, as well as scaling factors $t_1$ and $t_2$ with $t_1^2 + t_2^2 = 1$ and numbers $\alpha_1$, $\alpha_2$ such that
\begin{displaymath}
	\Omega_1 = t_1 \Omega_i^\ast, \qquad \Omega_2 = t_2 \Omega_{k-i}^\ast,
\end{displaymath}
with $\Omega_i$ and $\Omega_{k-i}$ realising $\inf \{ \keig{i}{\Omega}{\alpha_1} : t_1^{-1}\Omega \in \family \}$ and $\inf \{ \keig{k-i}{\Omega}{\alpha_2} : t_2^{-1}\Omega \in \family \}$, respectively. Moreover,
\begin{displaymath}
	\keig{k}{\Omega^\ast}{\alpha} = \keig{i}{t_1\Omega_i^\ast}{\alpha} = \keig{k-i}{t_2\Omega_k^\ast}{\alpha}.
\end{displaymath}
\end{lemma}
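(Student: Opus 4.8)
The plan is to run the Wolf--Keller argument (\cite{woke}, in the Robin-adapted form of \cite[Theorem~2.4]{anfrke}) and check that it uses nothing about the competitor domains beyond the closure property built into Definition~\ref{def:admissible-family}, so that it transfers verbatim from all Lipschitz domains to an arbitrary admissible family $\family$. First I would record the spectral decomposition: as $\Omega^\ast=\Omega_1\cup\Omega_2$ is a disjoint union, the form $q_\alpha$ on $H^1(\Omega^\ast)$ is the orthogonal direct sum of the corresponding forms on $H^1(\Omega_1)$ and $H^1(\Omega_2)$, so the spectrum of the Robin Laplacian on $\Omega^\ast$ is the merge, with multiplicity, of those on $\Omega_1$ and $\Omega_2$. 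With the convention $\keig{0}{\,\cdot\,}{\alpha}:=0$, the elementary formula for the $k^{\rm th}$ smallest element of a merge of two nondecreasing sequences gives
\[
	\keig{k}{\Omega^\ast}{\alpha}=\min_{\substack{a,b\geq 0\\ a+b=k}}\max\bigl\{\keig{a}{\Omega_1}{\alpha},\,\keig{b}{\Omega_2}{\alpha}\bigr\},
\]
and I fix an index $i$ at which this minimum is attained. That $1\leq i\leq k-1$, i.e.\ that both pieces really contribute, follows because otherwise one could rescale the single contributing piece up to total area $A$, strictly decreasing $\keig{k}{\,\cdot\,}{\alpha}$: here one uses the scaling law $\keig{k}{t\Omega}{\alpha}=t^{-2}\keig{k}{\Omega}{t\alpha}$ of \eqref{eq:homothetic-scaling} together with the strict monotonicity of $t\mapsto t^{-2}\keig{k}{\Omega}{t\alpha}$ (a standard consequence of the scaling relation, cf.\ \cite[Section~4.2.1]{bufrke}).

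The heart of the matter is a single perturbation observation, used twice. Write $\Omega_1=t_1\Lambda_1$, $\Omega_2=t_2\Lambda_2$ with $\Lambda_1,\Lambda_2\in\family$ and $t_1^2+t_2^2=1$ -- legitimate since $\Omega_1,\Omega_2$ are disjoint unions of rescaled connected members of $\family$ and Definition~\ref{def:admissible-family} permits rescaling each of them back to area $A$ -- and put $\Lambda_0:=\keig{k}{\Omega^\ast}{\alpha}$. The observation is: if $\keig{i}{\Omega_1}{\alpha}\leq\Lambda_0$, $\keig{k-i}{\Omega_2}{\alpha}\leq\Lambda_0$, but the two values are unequal, then enlarging whichever piece attains the larger value by a tiny factor and shrinking the other (keeping $t_1^2+t_2^2=1$, hence staying in $\family$) yields a competitor whose merged spectrum has at least $i+(k-i)=k$ entries strictly below $\Lambda_0$ -- because strict dilation-monotonicity drops the enlarged piece's relevant eigenvalue strictly below $\Lambda_0$ while the other piece's stays below $\Lambda_0$ by continuity -- contradicting the optimality of $\Omega^\ast$. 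Applied to $(\Omega_1,\Omega_2)$ (where at the minimising $i$ both values are $\leq\Lambda_0$ and at least one equals $\Lambda_0$) this forces $\keig{i}{\Omega_1}{\alpha}=\keig{k-i}{\Omega_2}{\alpha}=\Lambda_0$, which is the ``moreover'' chain of equalities. Applied after replacing $\Omega_1$ by an arbitrary competitor $t_1\Lambda$ with $\Lambda\in\family$, it rules out $\keig{i}{t_1\Lambda}{\alpha}<\Lambda_0$, so $\Omega_1$ minimises $\keig{i}{\,\cdot\,}{\alpha}$ among all rescalings $t_1\Lambda$ of members of $\family$; undoing the $t_1$-scaling via the scaling law, $\Omega_i^\ast:=t_1^{-1}\Omega_1\in\family$ minimises $\keig{i}{\,\cdot\,}{\alpha_1}$ over $\family$ with $\alpha_1:=t_1\alpha$, and $\Omega_1=t_1\Omega_i^\ast$. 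Symmetrically $\Omega_2=t_2\Omega_{k-i}^\ast$ with $\alpha_2:=t_2\alpha$, and the lemma follows.

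The step I expect to be the real obstacle is making this perturbation argument watertight, since the count ``number of merged eigenvalues strictly below $\Lambda_0$'' is only lower semicontinuous in the scaling parameters precisely when eigenvalues of the two pieces coincide at $\Lambda_0$ -- which, by the balancing equality just derived, is exactly the situation one is in. The way to do it cleanly, and the route taken in \cite{anfrke}, is to recast everything as a double minimisation: minimise first over the splitting index $i$ and a scaling parameter $s\in(0,1)$, then over the two members of $\family$ separately, exploiting that $\inf_{\Lambda,\Lambda'}\max\{f(\Lambda),g(\Lambda')\}=\max\{\inf_\Lambda f,\,\inf_{\Lambda'}g\}$ decouples the two pieces; strict monotonicity of $t\mapsto t^{-2}\keig{k}{\Omega}{t\alpha}$ then extracts the balancing condition and the individual optimality of the pieces simultaneously. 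This formulation also makes clear that one needs the infima defining $\Omega_i^\ast$ and $\Omega_{k-i}^\ast$ to be attained within the relevant rescaled subfamilies of $\family$, a compactness statement which for rectangles and for finite unions of rectangles is exactly what is proved in Section~\ref{sec:rectangles}.
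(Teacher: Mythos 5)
Your argument is correct and is essentially the paper's own approach: the paper gives no self-contained proof here, instead citing \cite[Theorem~2.4]{anfrke} and observing that its proof ``may be repeated verbatim for any admissible family'', and your proposal is a faithful reconstruction of exactly that Wolf--Keller argument (merge formula for the spectrum of a disjoint union, strict dilation-monotonicity from Lemma~\ref{lem:homothetic} to force the balancing equalities and the optimality of each piece, and the scaling law \eqref{eq:homothetic-scaling} to convert the rescaled minimisations into minimisations over $\family$ with parameters $\alpha_i=t_i\alpha$). The only thing that genuinely needed checking beyond the cited proof --- that the pieces $t_1^{-1}\Omega_1$ and $t_2^{-1}\Omega_2$ and all perturbed competitors remain in $\family$ --- is precisely the closure property of Definition~\ref{def:admissible-family}, which you correctly invoke.
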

Obviously, in the above lemma we do not rule out the possibility that $\Omega_1$ and $\Omega_2$ are themselves disconnected, meaning this principle extends inductively to all the connected components of $\Omega^\ast$.

Despite the complicated way in which the Robin problem scales, blowing up a domain via a homothetic scaling always decreases the eigenvalues.

\begin{lemma}
\label{lem:homothetic}
Suppose $\Omega \subset \R^d$ is a bounded, Lipschitz domain, $d\geq 1$, and $\alpha>0$. Then for each $k\geq 1$, the function $t \mapsto \keig{k}{t\Omega}{\alpha}$ is continuous and strictly decreasing in $t \in (0,\infty)$.
\end{lemma}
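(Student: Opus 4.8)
The plan is to reduce everything to the behaviour of the Robin Rayleigh quotient under the dilation $x\mapsto x/t$. Writing $m(t):=\keig{k}{t\Omega}{\alpha}$, a direct change of variables $x\mapsto x/t$ (equivalently, the scaling identity $\keig{k}{t\Omega}{\alpha}=t^{-2}\keig{k}{\Omega}{t\alpha}$ used in the proof of Lemma~\ref{lem:optimal-scaling}, cf.\ also \eqref{eq:homothetic-scaling}) identifies
\begin{displaymath}
	m(t)=t^{-2}\keig{k}{\Omega}{t\alpha}
	=\min_{\substack{V\subseteq H^1(\Omega)\\ \dim V=k}}\ \max_{0\neq v\in V}\ Q_t(v),
\end{displaymath}
where
\begin{displaymath}
	Q_t(v):=\frac{t^{-2}\,\|\nabla v\|_{L^2(\Omega)}^2+t^{-1}\alpha\,\|v\|_{L^2(\partial\Omega)}^2}{\|v\|_{L^2(\Omega)}^2}.
\end{displaymath}
Continuity of $m$ then drops out immediately, since $t\mapsto t^{-2}$ is continuous on $(0,\infty)$ and $t\mapsto\keig{k}{\Omega}{t\alpha}$ is continuous by Lemma~\ref{lem:parameter-continuity}; so the real content is strict monotonicity.

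For that, the first step is the elementary observation that for each \emph{fixed} $v\in H^1(\Omega)\setminus\{0\}$ the function $t\mapsto Q_t(v)=t^{-2}a(v)+t^{-1}b(v)$, with $a(v):=\|\nabla v\|_{L^2(\Omega)}^2/\|v\|_{L^2(\Omega)}^2$ and $b(v):=\alpha\|v\|_{L^2(\partial\Omega)}^2/\|v\|_{L^2(\Omega)}^2$, is strictly decreasing on $(0,\infty)$: both coefficients are nonnegative, and they cannot vanish simultaneously, because $a(v)=0$ forces $v$ to be constant on each connected component of $\Omega$, whereupon $\alpha>0$ together with the fact that $\partial\Omega$ has positive surface measure (as $\Omega$ is a nonempty Lipschitz domain) gives $b(v)>0$. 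Passing this pointwise domination through the inner $\max$ and the outer $\min$ immediately yields that $m$ is non-increasing.

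To upgrade to \emph{strict} decrease I would fix $0<t<t'$ and pick a $k$-dimensional subspace $V^\ast\subseteq H^1(\Omega)$ realising $m(t)=\max_{0\neq v\in V^\ast}Q_t(v)$ (the span of the pullbacks to $\Omega$ of the first $k$ Robin eigenfunctions of $t\Omega$). Since $V^\ast$ is finite-dimensional and $Q_{t'}$ is continuous and $0$-homogeneous, the maximum $\max_{0\neq v\in V^\ast}Q_{t'}(v)$ is attained at some $\tilde w\in V^\ast\setminus\{0\}$; then, using $V^\ast$ as a competitor in the min--max for $m(t')$, the strict decrease of $s\mapsto Q_s(\tilde w)$, and optimality of $V^\ast$ for $m(t)$,
\begin{displaymath}
	m(t')\leq\max_{0\neq v\in V^\ast}Q_{t'}(v)=Q_{t'}(\tilde w)<Q_t(\tilde w)\leq\max_{0\neq v\in V^\ast}Q_t(v)=m(t).
\end{displaymath}

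The main obstacle is thus not any hard analysis but simply making the strictness argument airtight; within it, the step that genuinely uses the hypothesis $\alpha>0$ is the claim that $a(v)$ and $b(v)$ cannot both vanish. It is worth noting that for $\alpha=0$ (Neumann) this fails: constant functions make $Q_t\equiv 0$, and $m(t)=t^{-2}\mu_k(\Omega)$ is then only non-increasing (indeed constant when $k=1$), so the strict monotonicity really does hinge on the Robin parameter being positive. Everything else is routine: the change of variables is the standard one, and the remaining manipulations only involve the elementary function $t^{-2}a+t^{-1}b$ and the variational characterisation of the eigenvalues.
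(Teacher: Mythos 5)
Your proof is correct. The paper itself does not argue the lemma but simply cites \cite[Lemma~2.13]{anfrke}, so there is no in-text proof to compare against; your argument is a complete, self-contained substitute. The two essential points are both handled properly: (i) the reduction $\keig{k}{t\Omega}{\alpha}=t^{-2}\keig{k}{\Omega}{t\alpha}$ and the resulting form $Q_t(v)=t^{-2}a(v)+t^{-1}b(v)$ of the Rayleigh quotient are the correct $d$-dimensional scaling (note they match what the paper actually uses in the proof of Lemma~\ref{lem:optimal-scaling}, namely $\keig{k}{t\Omega}{\alpha}=t^{-2}\keig{k}{\Omega}{t\alpha}$, rather than the misprinted exponent $2/d$ in the displayed Dirichlet/Neumann scaling); and (ii) the upgrade from ``non-increasing'' to ``strictly decreasing'' is done the right way, by evaluating $Q_{t'}$ and $Q_t$ at the \emph{same} element $\tilde w$ of an optimal subspace $V^\ast$ for $m(t)$, rather than trying to compare two maxima attained at different points. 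The observation that $a(v)=0$ forces $v$ to be locally constant and hence, for $v\neq 0$ and $\alpha>0$, $b(v)>0$ (using that a Lipschitz boundary has positive surface measure) is exactly where the hypothesis $\alpha>0$ enters, and your remark that strictness fails for $\alpha=0$ correctly delimits the result.
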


\begin{proof}
See \cite[Lemma~2.13]{anfrke}.
\end{proof}

One property that the Robin Laplacian does share with its Dirichlet and Neumann counterparts is the fact that on rectangles a complete system of eigenfunctions can be found by separation of variables, as can be shown by the usual means. A particularly important consequence is that the $k^{\rm th}$ Robin eigenvalue of a rectangle is given by a suitable sum of Robin eigenvalues of intervals corresponding to the side lengths of the rectangle. More precisely, in the notation introduced just above, given $A>0$, $\alpha>0$ and $a>0$, for any $k\geq 1$ there exists a pair $(i,j) \in \N \times \N$ such that
\begin{equation}
\label{eq:separation-of-variables}
	\keig{k}{\rect{a}{A}}{\Omega} = \keig{i}{\interv{\sqrt{A}a}}{\alpha} + \keig{j}{\interv{\sqrt{A}/a}}{\alpha};
\end{equation}
moreover, every such pair corresponds to an eigenvalue of $\rect{a}{A}$. Of course, as in the Dirichlet and Neumann cases there is in general no clear relationship between $k$ on the one hand and the pair $(i,j)$ on the other. Partly for this reason, the following definition will be important.

\begin{definition}
\label{def:i-j-mode}
Fix the area $A>0$, the boundary parameter $\alpha>0$ and the side length $a>0$. For any positive integers $i,j$, the eigenvalue of $\rect{a}{A}$ given by
\begin{displaymath}
	\keig{i}{\interv{\sqrt{A}a}}{\alpha} + \keig{j}{\interv{\sqrt{A}/a}}{\alpha}.
\end{displaymath}
will be denoted by $\eigmode{i}{j}{\rect{a}{A}}{\alpha}$ and called \emph{the eigenvalue (of $\rect{a}{A}$) associated with the $(i,j)$ mode}.
\end{definition}

\begin{remark}
\label{rem:i-j-mode}
By standard Sturm--Liouville theory, the eigenvalue $\keig{k}{\interv{a}}{\alpha}$ is always simple, and its eigenfunction has exactly $k-1$ zeros in the interior of
$\interv{a}$, that is, it has $k$ nodal domains. Thus there is always exactly one eigenfunction (up to scalar multiples) associated with the eigenmode $\eigmode{i}{j}{\rect{a}{A}}{\alpha}$ (even if the corresponding eigenvalue itself has higher multiplicity), and the $i\times j$ nodal domains of the eigenfunction are rectangles arranged in a grid pattern, just as in the Dirichlet and Neumann cases, with $\eigmode{i}{j}{\rect{a}{A}}{\alpha}$ being the first eigenvalue of the nodal domain with Dirichlet conditions on the edges interior to $\Omega$, and the Robin condition on those edges it has in common with $\partial\Omega$ (``exterior edges''). This means that not all nodal domains will be isometric copies of each other: the area of a nodal domain is a strictly decreasing function of its number of exterior edges; furthermore, any two nodal domains with the same number of exterior edges must be isometric to each other. This follows from the fact that having Robin boundary conditions on a side lowers the eigenvalue, together with the monotonicity with respect to homothetic scalings.
\end{remark}

We finish this section by noting the following continuity result for the eigenvalues with respect to edge lengths.

\begin{lemma}
\label{lem:length-continuity}
Fix $A>0$ and $\alpha>0$. Then for each $k\geq 1$, the maps
\begin{displaymath}
	a \mapsto \keig{k}{\interv{a}}{\alpha},\qquad a \mapsto \keig{k}{\rect{a}{A}}{\alpha}
\end{displaymath}
are continuous in $a\geq 1$, the former even being analytic.
\end{lemma}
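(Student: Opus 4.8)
The plan is to establish the interval statement first, obtaining analyticity from one-dimensional Sturm--Liouville theory, and then to deduce continuity on rectangles from it via separation of variables.

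\emph{Intervals.} By the special case $d=1$ of the homothetic scaling relation \eqref{eq:homothetic-scaling} (equivalently, the change of variables $x\mapsto x/a$) one has
\[
	\keig{k}{\interv{a}}{\alpha} = a^{-2}\,\keig{k}{\interv{1}}{a\alpha},
\]
so it suffices to show that $\beta\mapsto\keig{k}{\interv{1}}{\beta}$ is real-analytic on $(0,\infty)$ and then to compose with the analytic maps $a\mapsto a\alpha$ and $a\mapsto a^{-2}$. On the fixed interval $\interv{1}$ the form $q_\beta(u,v)=\int_0^1 u'\overline{v'}\,\mathrm{d}x+\beta\bigl(u(0)\overline{v(0)}+u(1)\overline{v(1)}\bigr)$ depends affinely on $\beta$ with $\beta$-independent form domain $H^1(0,1)$, so $\{q_\beta\}$ is a holomorphic family of type (B) in the sense of Kato; since each $\keig{k}{\interv{1}}{\beta}$ is simple by standard Sturm--Liouville theory (cf.\ Remark~\ref{rem:i-j-mode}), analytic perturbation theory gives that each eigenvalue branch is analytic in a complex neighbourhood of $(0,\infty)$, hence real-analytic there. (Equivalently, since for $\beta>0$ all these eigenvalues are positive, one may write $\tau=\omega^2$, observe that $\omega$ solves the entire equation $(\beta^2-\omega^2)\sin\omega+2\beta\omega\cos\omega=0$, and apply the analytic implicit function theorem at each --- necessarily simple --- positive root; distinct eigenvalues remaining distinct, the branches never cross and the ordering is preserved.) Thus $a\mapsto\keig{k}{\interv{a}}{\alpha}$ is real-analytic on $(0,\infty)$, in particular on $[1,\infty)$.

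\emph{Rectangles.} By separation of variables \eqref{eq:separation-of-variables} (see Definition~\ref{def:i-j-mode}), $\keig{k}{\rect{a}{A}}{\alpha}$ is the $k$-th smallest element of the countable family
\[
	f_{ij}(a):=\eigmode{i}{j}{\rect{a}{A}}{\alpha}=\keig{i}{\interv{\sqrt{A}a}}{\alpha}+\keig{j}{\interv{\sqrt{A}/a}}{\alpha},\qquad i,j\geq 1,
\]
each of which is continuous (indeed analytic) in $a$ by the interval case. The crucial point is that this family is locally finite from below: since the Robin eigenvalues of an interval dominate the Neumann ones (monotonicity in the boundary parameter, Lemma~\ref{lem:parameter-continuity}),
\[
	f_{ij}(a)\ \geq\ \frac{(i-1)^2\pi^2}{Aa^2}+\frac{(j-1)^2\pi^2 a^2}{A},
\]
so on any compact subinterval of $(0,\infty)$, and for any $M$, only finitely many pairs $(i,j)$ can have $f_{ij}\leq M$ somewhere on it. Fixing $a_0$ and setting $M:=\keig{k}{\rect{a_0}{A}}{\alpha}+1$, let $S$ be the finite set of pairs with $f_{ij}\leq M$ somewhere on a fixed compact neighbourhood of $a_0$ (note $|S|\geq k$, since at $a_0$ at least $k$ modes lie strictly below $M$). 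The $k$-th smallest of the finite family $\{f_{ij}:(i,j)\in S\}$, being the composition of a continuous $\R^{|S|}$-valued map with the continuous $k$-th order-statistic map on $\R^{|S|}$, is a continuous function $g$ of $a$; moreover $g(a_0)=\keig{k}{\rect{a_0}{A}}{\alpha}<M$, so $g<M$ on some smaller neighbourhood of $a_0$, on which every mode outside $S$ exceeds $M>g(a)$ and therefore $g(a)=\keig{k}{\rect{a}{A}}{\alpha}$. Continuity at $a_0$ follows, and $a_0$ was arbitrary.

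The main obstacle is this local-finiteness bookkeeping in the rectangular case: one must ensure that, near any fixed $a_0$, the $k$-th eigenvalue is controlled by only finitely many $(i,j)$-modes, so that the problem collapses to the elementary continuity of an order statistic of finitely many continuous functions; the uniform-in-$a$ Neumann lower bound supplies precisely this. We stress that only continuity, not analyticity, is to be expected for rectangles: as $a$ varies the pair realising the $k$-th eigenvalue changes, and at such mode transitions $a\mapsto\keig{k}{\rect{a}{A}}{\alpha}$ is in general merely piecewise analytic, with corners.
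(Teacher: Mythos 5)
Your proof is correct and follows essentially the same route as the paper: scaling plus analyticity of the transcendental characteristic equation (with simplicity of eigenvalues) for intervals, and separation of variables for rectangles. Your local-finiteness/order-statistic argument for the rectangle case is a cleaner and more rigorous justification of the step the paper dismisses with the remark that the mode assignment $k\sim(i,j)$ changes only at isolated points across which each mode is continuous.
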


\begin{proof}
The continuity of $a \mapsto \keig{k}{\interv{a}}{\alpha}$ is just the continuity of the mapping $t \mapsto t\Omega$ in the special case $\Omega = \interv{}$ (Lemma~\ref{lem:homothetic}); the analyticity follows from the fact that the eigenvalues are given as solutions of transcendental equations in $\tan$ (or $\cot$) which are analytic functions of their parameters, and each eigenvalue is simple. For the continuity of $a \mapsto \keig{k}{\rect{a}{A}}{\alpha}$, use the continuity of $a \mapsto \keig{k}{\interv{a}}{\alpha}$ together with the representation \eqref{eq:separation-of-variables}, noting that for any fixed $k$, the set of values of $a$ for which the relationship $k \sim (i,j)$ changes obviously consists of isolated points, and each eigenvalue is continuous across each isolated point.
\end{proof}

\section{Existence of minimising rectangles and unions of rectangles}
\label{sec:rectangles}

Let us start by giving a basic result stating that the problems we are considering are well posed: for any fixed eigenvalue and boundary parameter, there is a rectangle minimising that eigenvalue among all rectangles of given area; the same is true if we replace ``rectangle'' by ``union of rectangles''.

\begin{theorem}
\label{thm:existence}
Fix $k\geq 1$, $\alpha >0$ and $A>0$. Then there exists a rectangle $\mathcal{R}^\ast = \mathcal{R}^\ast (k,A,\alpha)$ of area $A$ such that
\begin{displaymath}
	\keig{k}{\mathcal{R}^\ast}{\alpha} = \keigoptrect{k}{A}{\alpha} = \inf \{ \keig{k}{\mathcal{R}}{\alpha}: \mathcal{R} 
	\textrm{ is a rectangle of area }A \}.
\end{displaymath}
Moreover, there exists a disjoint union of rectangles $\Omega^\ast = \Omega^\ast (k,A,\alpha)$ of total area $A$ such that
\begin{displaymath}
	\keig{k}{\Omega^\ast}{\alpha} = \keigopt{k}{A}{\alpha} = \inf\{ \keig{k}{\Omega}{\alpha}: \Omega
	\textrm{ disjoint union of rectangles, } |\Omega|=A\}.
\end{displaymath}
\end{theorem}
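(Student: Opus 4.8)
The plan is to handle the two assertions separately: for rectangles it is a one‑variable compactness argument, while for unions of rectangles the extra difficulty is to control the number and the aspect ratios of the components, which is where the bulk of the work lies. \emph{Rectangles.} Parametrise the rectangles of area $A$ by $a\in[1,\infty)$ and set $f(a):=\keig{k}{\rect{a}{A}}{\alpha}$, which is continuous on $[1,\infty)$ by Lemma~\ref{lem:length-continuity}. The essential point is coercivity: by separation of variables, cf.~\eqref{eq:separation-of-variables},
\[
 f(a)\ \geq\ \keig{1}{\rect{a}{A}}{\alpha}\ =\ \keig{1}{\interv{\sqrt{A}a}}{\alpha}+\keig{1}{\interv{\sqrt{A}/a}}{\alpha}\ \geq\ \keig{1}{\interv{\sqrt{A}/a}}{\alpha},
\]
and $\keig{1}{\interv{\ell}}{\alpha}\to\infty$ as $\ell\to0^{+}$ (it is of order $\ell^{-1}$; see Appendix~\ref{sec:interval}, or note directly that the ground‑state relation $\sqrt{\lambda}\tan(\sqrt{\lambda}\,\ell/2)=\alpha$ with $\sqrt{\lambda}\,\ell/2\in(0,\pi/2)$ forces $\lambda\to\infty$). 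Hence $f(a)\to\infty$ as $a\to\infty$, so there is $R>1$ with $f(a)>f(1)$ for $a>R$; thus $\inf_{a\ge1}f=\inf_{a\in[1,R]}f$ is attained by continuity and compactness, and the minimising $a$ yields $\mathcal{R}^\ast$.

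\emph{Unions of rectangles: pruning.} Put $M:=\keig{k}{\unionsquare{k}}{\alpha}+1<\infty$, so that $\keigopt{k}{A}{\alpha}<M$ and the infimum defining $\keigopt{k}{A}{\alpha}$ is not affected if we restrict to unions $\Omega$ with $\keig{k}{\Omega}{\alpha}<M$. Let $\ell_0:=\min\{\pi/(2\alpha),\ \pi\alpha/(2M),\ \sqrt{A}\}>0$. As in the rectangle case, a rectangle $\Omega_i$ whose shorter side $s_i$ satisfies $s_i<\ell_0$ has $\keig{1}{\Omega_i}{\alpha}\ge\keig{1}{\interv{s_i}}{\alpha}\ge\pi\alpha/(2s_i)>M$, using the elementary bound $\keig{1}{\interv{\ell}}{\alpha}\ge\pi\alpha/(2\ell)$ valid for $\ell\le\pi/(2\alpha)$ (which follows from the ground‑state relation above and convexity of $\tan$ on $[0,\pi/4]$). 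Now take any union $\Omega=\Omega_1\sqcup\dots\sqcup\Omega_n$ of rectangles of total area $A$ with $\keig{k}{\Omega}{\alpha}<M$. Its spectrum is the multiset union of the spectra of the $\Omega_i$, so $\Omega$ has at least $k$ eigenvalues below $M$, none of which comes from a component with $s_i<\ell_0$; in particular at least one component has $s_i\ge\ell_0$. Discarding all components with $s_i<\ell_0$ and scaling the remaining union by the factor $t\ge1$ needed to restore total area $A$ gives a competitor $\Omega'$ with $\keig{k}{\Omega'}{\alpha}\le\keig{k}{\Omega}{\alpha}$ (by Lemma~\ref{lem:homothetic}) all of whose components have shorter side $\ge\ell_0$; hence each such component has area $\ge\ell_0^2$ and therefore, having area $\le A$, both its side lengths lie in $[\ell_0,A/\ell_0]$, and there are at most $A/\ell_0^2$ of them. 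Calling such $\Omega'$ \emph{reduced}, we conclude that $\keigopt{k}{A}{\alpha}=\inf\{\keig{k}{\Omega'}{\alpha}:\Omega'\text{ reduced}\}$.

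\emph{Unions of rectangles: compactness.} It remains to see this infimum is attained. For each integer $n\le A/\ell_0^2$, the set $K_n$ of tuples $((s_i,t_i))_{i=1}^n\in\big([\ell_0,A/\ell_0]^2\big)^n$ with $\sum_i s_it_i=A$ is compact, and nonempty for $n=1$ (take $s_1=t_1=\sqrt{A}$, which is admissible since $\ell_0\le\sqrt A\le A/\ell_0$). The map sending such a tuple to $\keig{k}{\Omega'}{\alpha}$ is continuous: by separation of variables each eigenvalue of each component is an analytic function of its side lengths (Lemma~\ref{lem:length-continuity}), and the $k$‑th smallest element of the combined spectrum is then a continuous function of the tuple. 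Hence $\keig{k}{\,\cdot\,}{\alpha}$ attains its minimum on each nonempty $K_n$, and $\keigopt{k}{A}{\alpha}$ is the minimum of these finitely many values, attained by some reduced $\Omega^\ast$.

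\emph{Main obstacle.} The rectangle case and the final compactness step are routine; the real content is the pruning step for unions, i.e.\ excluding minimising sequences that run off to infinity by creating ever more, or ever thinner, components. This is exactly where the Robin setting is favourable: a thin or small rectangle has a \emph{large} first Robin eigenvalue, because it necessarily contains a short interval factor whose first eigenvalue blows up like the reciprocal of its length, so such components cannot reach the bottom of the spectrum; combined with the fact that blowing up a domain lowers all its eigenvalues (Lemma~\ref{lem:homothetic}), this confines a minimising sequence to a compact family of configurations.
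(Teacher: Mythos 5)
Your proof is correct and follows essentially the same route as the paper's: coercivity of the first Robin eigenvalue of a thin rectangle (via the short interval factor) to rule out degeneration, discarding superfluous components and inflating the rest (Lemma~\ref{lem:homothetic}), and continuity of the eigenvalues in the side lengths. The only difference is organisational — you reduce a priori to an explicit compact parameter space of ``reduced'' configurations with quantitative bounds $M$ and $\ell_0$, whereas the paper runs a minimising-sequence argument with a pigeonhole step — but the underlying mechanism is identical.
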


\begin{remark}
\label{rem:dirichlet-existence}
If we consider instead the Dirichlet Laplacian on rectangles and unions of rectangles, i.e., if we consider $\keigoptrect{k}{A}{\infty}$ and $\keigopt{k}{A}{\infty}$, then for each given $k\geq 1$ and $A>0$ we can also obtain a minimising domain in each case. We omit the proof, which is an especially easy simplified version of the proof of Theorem~\ref{thm:existence}.
\end{remark}

\begin{proof}[Proof of Theorem~\ref{thm:existence}]
1. First we consider the case of rectangles. Denote by $\rect{a_n}{A}$ with $a_n \geq 1$ a minimising sequence for $\keigoptrect{k}{A}{\alpha}$. Since
\begin{displaymath}
	\keig{k}{\rect{a}{A}}{\alpha} \geq \keig{1}{\rect{a}{A}}{\alpha} = \eigmode{1}{1}{\rect{a}{A}}{\alpha}
	\geq \keig{1}{\interv{A^{1/2}/a}}{\alpha} \to \infty
\end{displaymath}
as $a \to \infty$ (cf.~Proposition~\ref{prop:firsteiginterv}), there exists some $\tilde a\geq 1$ such that $a_n \leq \tilde a$ for all $n\in \N$. Thus there exists an
$a^\ast \geq 1$ such that $a_n \to a^\ast$ up to a subsequence. The corresponding rectangle $\rect{a^\ast}{A}$ has area $A$ and, by Lemma~\ref{lem:length-continuity},
\begin{displaymath}
	\keig{k}{\rect{a_n}{A}}{\alpha} \to \keig{k}{\rect{a^\ast}{A}}{\alpha}.
\end{displaymath}

2. Now we consider unions of rectangles. Suppose $\Omega_n$ is a minimising sequence for $\keigopt{k}{A}{\alpha}$. Since the eigenvalues are monotonic with respect to homothetic scalings of the domain (see Lemma~\ref{lem:homothetic}), we may assume without loss of generality that each domain $\Omega_n$ has no more than $k$ connected components, and that each connected component $U$ of $\Omega_n$ is ``needed'' (in the sense that $\keig{1}{U}{\alpha} \leq \keig{k}{\Omega_n}{\alpha}$ and $\keig{k}{\Omega_n \setminus U}{\alpha} > \keig{k}{\Omega_n}{\alpha}$ for each $U$).

By the pigeonhole principle, there exists some $\ell \leq k$ and subsequence, whose members we shall still denote by $\Omega_n$, such that each $\Omega_n$ has exactly $\ell$ connected components $U_n^1,\ldots,U_n^\ell$, and such that for each $i=1,\ldots,\ell$ there exists a fixed $j=j(i)$ such that $\keig{j}{U_n^i}{\alpha} \leq \keig{k}{\Omega_n}{\alpha} < \keig{j+1}{U_n^i}{\alpha}$ (in words, each component $U_n^i$ always ``contributes'' the same number $j$ of eigenvalues to the first $k$ of $\Omega_n$, independently of $n$). Applying the argument for rectangles, part 1, to each connected component, we obtain a limit domain $\Omega^\ast$ of area $A$ such that
\begin{displaymath}
	\keig{k}{\Omega_n}{\alpha} \to \keig{k}{\Omega^\ast}{\alpha}
\end{displaymath}
as $n \to \infty$.
\end{proof}

\section{Isoperimetric inequalities for the low eigenvalues}
\label{sec:isoperimetric}

We will next prove that the square minimises the first eigenvalue among all rectangles (or indeed their unions) of given area; this may be considered an inequality of ``isoperimetric'' type, since the square has the least perimeter among all such domains.

\begin{theorem}
\label{thm:lambda1}
Let $A>0$ be given. Then for any $\alpha>0$ and any finite union of disjoint rectangles $\Omega \subset \R^2$ of total area $A$, we have
\begin{displaymath}
	\keig{1}{\Omega}{\alpha} \geq \keig{1}{\sq{\sqrt{A}}}{\alpha},
\end{displaymath}
with equality if and only if $\Omega$ is itself a square of side length $\sqrt{A}$.
\end{theorem}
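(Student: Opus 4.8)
The plan is to reduce the claim to two ingredients: first, that for a fixed total area, a union of rectangles cannot have smaller first eigenvalue than a single rectangle of that area; and second, that among single rectangles of area $A$, the square is the minimiser. For the first reduction, recall that for a disjoint union $\Omega = U_1 \cup \ldots \cup U_n$ one has $\keig{1}{\Omega}{\alpha} = \min_i \keig{1}{U_i}{\alpha}$, where each $U_i$ is a rectangle of area $A_i \le A$. Since $U_i$ is a homothetic scaling of a rectangle of area $A$ by the factor $t = (A_i/A)^{1/2} \le 1$, Lemma~\ref{lem:homothetic} gives $\keig{1}{U_i}{\alpha} \geq \keig{1}{(A/A_i)^{1/2}U_i}{\alpha} = \keig{1}{\rect{a_i}{A}}{\alpha}$ for the appropriate aspect ratio $a_i \ge 1$, with strict inequality unless $A_i = A$. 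Hence it suffices to treat a single rectangle, and equality in the union case forces $n=1$ and $A_1 = A$.

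For the single-rectangle case, I would use the separation-of-variables representation \eqref{eq:separation-of-variables}: the first eigenvalue of $\rect{a}{A}$ is
\[
 \keig{1}{\rect{a}{A}}{\alpha} = \keig{1}{\interv{A^{1/2}a}}{\alpha} + \keig{1}{\interv{A^{1/2}/a}}{\alpha},
\]
the $(1,1)$ mode. Writing $f(\ell) := \keig{1}{\interv{\ell}}{\alpha}$ for the first Robin eigenvalue of an interval of length $\ell$, and $\ell_1 = A^{1/2}a$, $\ell_2 = A^{1/2}/a$ so that $\ell_1 \ell_2 = A$ is fixed, the task becomes: show that $g(a) := f(A^{1/2}a) + f(A^{1/2}/a)$ is minimised at $a = 1$. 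The natural approach is to show that $\ell \mapsto f(e^\ell)$ is strictly convex (equivalently that $g$, as a function of the logarithm of the aspect ratio, is strictly convex and even), which immediately yields the minimum at the symmetric point $a=1$ and the strict uniqueness. Convexity in the logarithmic variable should follow from the known monotonicity and concavity properties of $f$ together with the transcendental equation defining it — in the Appendix the authors collect precise estimates and the formula for $f'$, so I would differentiate the defining relation $\sqrt{f}\tan(\sqrt{f}\,\ell/2) = \alpha$ (for the symmetric ground state) twice and check the sign; alternatively, one can invoke the known fact that $\ell \mapsto f(\ell)$ behaves like $2\alpha/\ell$ for small $\ell$ and like $\pi^2/\ell^2$ for large $\ell$, both of which are log-convex, and verify the sign of the second derivative in between.

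The main obstacle is precisely this convexity statement for the interval eigenvalue $f$: unlike the Dirichlet case where $f(\ell) = \pi^2/\ell^2$ is explicitly (log-)convex, here $f$ is only implicitly defined, so one must extract monotonicity of $\ell f'(\ell)$ or the sign of $\big(\log f(e^\ell)\big)''$ from the transcendental equation, using the sharp interval estimates from the Appendix. An alternative route that avoids pointwise convexity is a rearrangement/symmetrisation-type argument directly on the two-variable function, or a clever use of the fact (noted in Lemma~\ref{lem:parameter-continuity}) that $\alpha \mapsto \keig{1}{\cdot}{\alpha}$ is concave: one could compare $g(a)$ with $g(1)$ by an interpolation in $\alpha$, but the cleanest is to establish that $a \mapsto \keig{1}{\rect{a}{A}}{\alpha}$ is strictly increasing for $a \ge 1$, which is what the authors will presumably do. Once the single-rectangle inequality is in hand with its equality characterisation, combining it with the union reduction above completes the proof, and the equality case collapses exactly to $\Omega = \sq{\sqrt{A}}$.
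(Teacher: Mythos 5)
Your reduction from unions of rectangles to a single rectangle is correct and is essentially identical to Step~2 of the paper's proof: the first eigenvalue of a disjoint union is the minimum over components, each component has area $A_i<A$, and Lemma~\ref{lem:homothetic} (strict monotonicity under homothetic blow-up) then strictly reduces the problem to a single rectangle of area $A$, with the equality case collapsing as you say. The gap is in the single-rectangle step, which is the actual heart of the theorem. You reduce it to the claim that $\ell \mapsto \keig{1}{\interv{e^\ell}}{\alpha}$ is strictly convex (equivalently, that $\ell\,|f'(\ell)|$ is decreasing), but you never prove this: you list it as ``the main obstacle'' and offer only heuristics (the two asymptotic regimes are individually log-convex; ``differentiate twice and check the sign''). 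Log-convexity of an implicitly defined transcendental eigenvalue is exactly the kind of statement that can fail in an intermediate regime, and checking the sign of $\bigl(\log f(e^\ell)\bigr)''$ or of $(\ell f'(\ell))'$ from \eqref{eq:1eig-interval} is not a routine computation -- it is the entire content of the result. As it stands, the proposal correctly identifies what must be shown but does not show it.

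For comparison, the paper proves only the weaker ``paired'' version of your convexity claim, namely the comparison of $|f'|$ at the two reciprocal lengths $a$ and $a^{-1}$ that actually occur in the $(1,1)$ mode, and this already requires three separate ingredients: (i) implicit differentiation of \eqref{eq:1eig-interval} to get the explicit formulas
\begin{displaymath}
	\frac{\partial}{\partial a}\keig{1}{\interv{a}}{\alpha}
	= -\frac{2\keig{1}{\interv{a}}{\alpha}\,\alpha}{2\sin^2\left(\frac{a}{2}\sqrt{\keig{1}{\interv{a}}{\alpha}}\right)+a\alpha},
\end{displaymath}
and its analogue for $\interv{a^{-1}}$; (ii) the scaling relation \eqref{eq:homothetic-scaling} combined with the concavity of $\alpha\mapsto\keig{1}{\interv{1}}{\alpha}$ from Lemma~\ref{lem:parameter-continuity}, which yields $\keig{1}{\interv{a}}{\alpha}\leq a^{-1}\keig{1}{\interv{1}}{\alpha}$ and $\keig{1}{\interv{a^{-1}}}{\alpha}\geq a\keig{1}{\interv{1}}{\alpha}$ and hence controls the ratio of the two eigenvalues; and (iii) a monotonicity comparison of the two sine terms, using that both arguments lie below $\pi/2$ (since the Robin eigenvalue is below the Dirichlet one) together with another application of the scaling bounds. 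None of these steps appears in your proposal, and the concavity in $\alpha$ in particular is an input you did mention in passing but did not connect to the estimate where it is actually needed. So the architecture is right, but the proof is incomplete precisely where the work lies; to finish it you would either have to establish the full log-convexity of $f$ (a stronger statement than the paper proves, requiring its own argument) or carry out the paired derivative comparison above.
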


\begin{proof}
Since $\alpha > 0$ is arbitrary, we may assume without loss of generality that $A=1$ (cf.~Lemma~\ref{lem:optimal-scaling}).

1. We start by proving the statement for rectangles. So fix $a \geq 1$ and consider $\rect{a}{1}$. By separation of variables, cf.~\eqref{eq:separation-of-variables},
\begin{displaymath}
	\keig{1}{\rect{a}{1}}{\alpha} = \keig{1}{\interv{a}}{\alpha} + \keig{1}{\interv{a^{-1}}}{\alpha},
\end{displaymath}
where we recall $\keig{1}{\interv{b}}{\alpha}$ is the smallest positive solution $\lambda$ of the equation~\eqref{eq:1eig-interval}.
To prove that $\keig{1}{\rect{a}{1}}{\alpha}$ achieves a unique global minimum at $a=1$, it thus suffices to show that for any $a > 1$,
\begin{displaymath}
	\frac{\partial}{\partial a}\keig{1}{\interv{a}}{\alpha} \leq 0 \leq \frac{\partial}{\partial a}\keig{1}{\interv{a^{-1}}}{\alpha} \ \text{and} \
	\left|\frac{\partial}{\partial a}\keig{1}{\interv{a}}{\alpha}\right| < \left|\frac{\partial}{\partial a}\keig{1}{\interv{a^{-1}}}{\alpha}\right|.
\end{displaymath}
Differentiating $\lambda$ implicitly with respect to $a$ in \eqref{eq:1eig-interval}, a slightly tedious but elementary calculation leads us to
\begin{displaymath}
	\frac{\partial}{\partial a}\keig{1}{\interv{a}}{\alpha} = -\frac{2\keig{1}{\interv{a}}{\alpha}\alpha}
	{2\sin^2\left(\frac{a}{2}{\sqrt{\keig{1}{\interv{a}}{\alpha}}}\right)+a\alpha} < 0.
\end{displaymath}
A similar calculation yields
\begin{displaymath}
	\frac{\partial}{\partial a}\keig{1}{\interv{a^{-1}}}{\alpha} = \frac{2\keig{1}{\interv{a^{-1}}}{\alpha}\alpha}
	{2a^2\sin^2\left(\frac{1}{2a}{\sqrt{\keig{1}{\interv{a^{-1}}}{\alpha}}}\right)+a\alpha} > 0.
\end{displaymath}
Now the scaling relations \eqref{eq:homothetic-scaling} and the inequality $\keig{1}{\interv{1}}{a\alpha} \leq a\keig{1}{\interv{1}}{\alpha}$ for $a\geq 1$
(which follows from the last assertion in Lemma~\ref{lem:parameter-continuity}) give
\begin{equation}
\label{eq:1eig-interval-a-scaling-est}
	\keig{1}{\interv{a}}{\alpha} = a^{-2}\keig{1}{\interv{1}}{a\alpha} \leq a^{-1}\keig{1}{\interv{1}}{\alpha},
\end{equation}
while the reverse inequality for $a^{-1}<1$, that is, $\keig{1}{\interv{1}}{a^{-1}\alpha} \geq a^{-1}\keig{1}{\interv{1}}{\alpha}$, implies
\begin{equation}
\label{eq:1eig-interval-a-1-scaling-est}
	\keig{1}{\interv{a^{-1}}}{\alpha} = a^2\keig{1}{\interv{1}}{a^{-1}\alpha} \geq a\keig{1}{\interv{1}}{\alpha}.
\end{equation}
Applying \eqref{eq:1eig-interval-a-scaling-est} and \eqref{eq:1eig-interval-a-1-scaling-est} to the expressions for the derivatives found above, we have
\begin{displaymath}
\begin{aligned}
	\left|\frac{\frac{\partial}{\partial a}\keig{1}{\interv{a}}{\alpha}}{\frac{\partial}{\partial a}\keig{1}{\interv{a^{-1}}}{\alpha}}\right|
	&=\frac{\keig{1}{\interv{a}}{\alpha}}{\keig{1}{\interv{a^{-1}}}{\alpha}}
	\frac{2a^2\sin^2\left(\frac{1}{2a}{\sqrt{\keig{1}{\interv{a^{-1}}}{\alpha}}}\right)+a\alpha}
	{2\sin^2\left(\frac{a}{2}{\sqrt{\keig{1}{\interv{a}}{\alpha}}}\right)+a\alpha}\\
	&\leq \frac{2a\sin^2\left(\frac{1}{2a}{\sqrt{\keig{1}{\interv{a^{-1}}}{\alpha}}\alpha}\right)+\alpha}
	{2a\sin^2\left(\frac{a}{2}{\sqrt{\keig{1}{\interv{a}}{\alpha}}}\right)+a^2\alpha}.
\end{aligned}
\end{displaymath}
To complete the proof for rectangles it suffices to show that this expression is smaller than $1$ whenever $a>1$; in this case, it is in turn sufficient to show that
\begin{equation}
\label{eq:sine-comparison}
	\sin^2\left(\frac{1}{2a}{\sqrt{\keig{1}{\interv{a^{-1}}}{\alpha}}}\right) \leq 
	\sin^2\left(\frac{a}{2}{\sqrt{\keig{1}{\interv{a}}{\alpha}}}\right).
\end{equation}
But since $\keig{1}{\interv{b}}{\alpha}$ is always smaller than the corresponding Dirichlet eigenvalue $\pi^2/b^2$ for any $b>0$, the arguments of the sines in \eqref{eq:sine-comparison} are always less than $\pi/2$. In particular, since in this range $x\mapsto \sin^2(x)$ is monotonically increasing in $x$, to establish \eqref{eq:sine-comparison} it is sufficient to show that
\begin{displaymath}
	\frac{1}{2a}{\sqrt{\keig{1}{\interv{a^{-1}}}{\alpha}}} \leq \frac{a}{2}{\sqrt{\keig{1}{\interv{a}}{\alpha}}},
\end{displaymath}
which, upon rearrangement, is equivalent to
\begin{displaymath}
	\keig{1}{\interv{a^{-1}}}{\alpha} \leq a^4 \keig{1}{\interv{a}}{\alpha}.
\end{displaymath}
But this now follows from the scaling relations $\keig{1}{\interv{a^{-1}}}{\alpha} \leq a^2 \keig{1}{\interv{1}}{\alpha}$ and $\keig{1}{\interv{a}}{\alpha} \geq a^{-2} \keig{1}{\interv{1}}{\alpha}$ (cf.~\eqref{eq:homothetic-scaling}). This establishes \eqref{eq:sine-comparison} and hence the statement of the theorem for rectangles.

2. Now suppose that $\Omega$ is a union of two or more rectangles. Then there exists some rectangle $\rect{a_1}{A_1}$ with $A_1<1=A$, such that
\begin{displaymath}
	\keig{1}{\Omega}{\alpha} = \keig{1}{\rect{a_1}{A_1}}{\alpha}.
\end{displaymath}
Using what we have just shown for rectangles, the inequality $A_1<1$, and the fact that $\keig{1}{t\Omega}{\alpha}$ is a strictly monotonically decreasing function of $t>0$ for any bounded, Lipschitz domain $\Omega$ (see Lemma~\ref{lem:homothetic}),
\begin{displaymath}
	\keig{1}{\rect{a_1}{A_1}}{\alpha} \geq \keig{1}{\sq{\sqrt{A_1}}}{\alpha} > \keig{1}{\sq{1}}{\alpha},
\end{displaymath}
which proves the theorem for $\Omega$.
\end{proof}

Theorem~\ref{thm:lambda1} yields as a corollary a corresponding statement concerning the second eigenvalue: that it is always minimised by the union $\unionsquare{2}$ of two equal squares. This statement of Hong--Krahn--Szego type can be proved by the usual means.

\begin{corollary}
\label{cor:lambda2}
Fix $A>0$. Then for any $\alpha>0$ and any finite union of disjoint rectangles $\Omega \subset \R^2$ of total area $A$,
\begin{displaymath}
	\keig{2}{\Omega}{\alpha} \geq \keig{2}{\unionsquare{2}}{\alpha},
\end{displaymath}
where $\unionsquare{2}$ is the disjoint union of two equal squares, each of side length $\sqrt{A/2}$. Equality holds if and only if $\Omega = \unionsquare{2}$ up to rigid transformations.
\end{corollary}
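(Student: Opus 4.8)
The plan is to prove Corollary~\ref{cor:lambda2} by a standard Hong--Krahn--Szego-type argument, using Theorem~\ref{thm:lambda1} as the crucial input. First I would observe that the eigenfunction $u$ associated with $\keig{2}{\Omega}{\alpha}$ must change sign (it is orthogonal to the positive first eigenfunction), so its nodal set partitions $\Omega$ into at least two nonempty open pieces $\Omega_+ = \{u>0\}$ and $\Omega_- = \{u<0\}$. Since $\Omega$ is a finite disjoint union of rectangles, these nodal domains are themselves (finite unions of) Lipschitz subdomains of $\Omega$, and by the standard variational/domain-monotonicity observation, $u|_{\Omega_{\pm}}$ is a first eigenfunction on $\Omega_{\pm}$ for the mixed problem with the Robin condition on $\partial\Omega_{\pm}\cap\partial\Omega$ and a Dirichlet condition on the interior nodal interface. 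Hence $\keig{1}{\Omega_+}{\alpha} \le \keig{2}{\Omega}{\alpha}$ and $\keig{1}{\Omega_-}{\alpha} \le \keig{2}{\Omega}{\alpha}$, where on each piece we may only \emph{decrease} the eigenvalue by replacing the Dirichlet interface condition with the Robin one --- i.e., by regarding $\Omega_{\pm}$ as a genuine Robin domain.

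The next step is to reduce to the case where each $\Omega_{\pm}$ is again a finite disjoint union of rectangles. A priori a nodal domain of a union of rectangles need not be a union of rectangles, so instead I would argue as follows: enlarge $\Omega_{\pm}$ (via homothetic scaling of its connected components, which only decreases $\keig{1}{\,\cdot\,}{\alpha}$ by Lemma~\ref{lem:homothetic}, or simply via domain monotonicity in the Robin sense where available) to a union of rectangles $\widetilde\Omega_{\pm}$ with $|\widetilde\Omega_{\pm}| = |\Omega_{\pm}|$; alternatively, and more cleanly, use that on each rectangular component of $\Omega$ the nodal set of the restriction of an eigenfunction — here we only need that $\keig{1}{\Omega_{\pm}}{\alpha}$ is bounded below by the first Robin eigenvalue of a union of rectangles of the same area, which follows by comparing $\Omega_{\pm}$ with the rectangle (or union thereof) obtained by a measure-preserving rearrangement and invoking the rectangular version of Theorem~\ref{thm:lambda1}. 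In either realization, writing $A_{\pm} = |\Omega_{\pm}|$ with $A_+ + A_- \le A$ and $A_{\pm} > 0$, Theorem~\ref{thm:lambda1} gives
\begin{displaymath}
	\keig{1}{\Omega_{\pm}}{\alpha} \geq \keig{1}{\sq{\sqrt{A_{\pm}}}}{\alpha}.
\end{displaymath}

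Now I would combine these bounds. We have
\begin{displaymath}
	\keig{2}{\Omega}{\alpha} \geq \max\left\{ \keig{1}{\sq{\sqrt{A_+}}}{\alpha},\, \keig{1}{\sq{\sqrt{A_-}}}{\alpha} \right\},
\end{displaymath}
and since $\keig{1}{\sq{\sqrt{t}}}{\alpha} = t^{-1}\keig{1}{\sq{1}}{t^{1/2}\alpha}$ is a strictly decreasing function of $t$ (Lemma~\ref{lem:homothetic}), the right-hand side, subject to $A_+ + A_- \le A$, is minimized only when $A_+ = A_- = A/2$, giving $\keig{2}{\Omega}{\alpha} \ge \keig{1}{\sq{\sqrt{A/2}}}{\alpha} = \keig{2}{\unionsquare{2}}{\alpha}$, the last equality because the two components of $\unionsquare{2}$ decouple and each contributes its first eigenvalue $\keig{1}{\sq{\sqrt{A/2}}}{\alpha}$ exactly twice as the first two eigenvalues of $\unionsquare{2}$. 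For the equality case one traces back: equality forces $A_+ = A_- = A/2$, each $\Omega_{\pm}$ to be a square of side $\sqrt{A/2}$ (by the equality clause of Theorem~\ref{thm:lambda1}), and the nodal interface to carry no Robin contribution, i.e., $\Omega$ is exactly the disjoint union of these two squares up to rigid motions.

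The main obstacle I anticipate is the technical point in the second paragraph: making rigorous that the nodal domains $\Omega_{\pm}$ (which need not be unions of rectangles) can be compared with unions of rectangles of the same area without losing the inequality --- in other words, justifying $\keig{1}{\Omega_{\pm}}{\alpha} \ge \inf\{\keig{1}{U}{\alpha} : U \text{ union of rectangles}, |U| = |\Omega_{\pm}|\}$. The cleanest route is probably to avoid the issue entirely by applying the Bossel--Daners Faber--Krahn inequality on $\Omega_{\pm}$ (giving a lower bound by a disk of the same area) only if that already beats the two-square value --- but since it generally does, in fact one can argue: by Bossel--Daners $\keig{1}{\Omega_{\pm}}{\alpha} \ge \keig{1}{B_{\pm}}{\alpha}$ for the disk $B_{\pm}$ of area $A_{\pm}$, and a separate elementary comparison shows $\keig{1}{B}{\alpha} \ge \keig{1}{\sq{\sqrt{|B|}}}{\alpha}$ is \emph{false} in general, so this detour does not work directly and one really must stay within rectangles. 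The honest fix is to note that each $\Omega_{\pm}$ is an open subset of a finite union of rectangles, so it is itself a finite (possibly large) union of Lipschitz pieces each contained in a rectangle; a further rearrangement/monotonicity step reduces to rectangles. Since the statement only claims the bound and the equality case, and the paper explicitly says this "can be proved by the usual means," I would present the nodal-domain decomposition and the two-square optimization in full, and handle the rearrangement step by citing domain monotonicity plus Theorem~\ref{thm:lambda1} applied after replacing each $\Omega_{\pm}$ by a rectangle of equal area --- this is the one place where care is genuinely needed.
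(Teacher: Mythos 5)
Your overall strategy (nodal decomposition, Theorem~\ref{thm:lambda1} on each piece, then balancing $A_+=A_-=A/2$) is the right skeleton and matches the paper's in spirit, but the step you yourself flag as problematic is a genuine gap, and none of the fixes you sketch actually works. You need $\keig{1}{\Omega_\pm}{\alpha}\geq\keig{1}{\sq{\sqrt{A_\pm}}}{\alpha}$, where $\Omega_\pm$ is a nodal domain that you concede need not be a union of rectangles. A ``measure-preserving rearrangement'' onto a rectangle that decreases the first Robin eigenvalue does not exist as a citable tool, and ``domain monotonicity'' is unavailable here: Robin eigenvalues are \emph{not} monotone under set inclusion (this is one of the two failures of P\'olya's method that the introduction of the paper emphasises), and in any case passing to a larger enclosing rectangle changes the area, which destroys the comparison with $\sq{\sqrt{A_\pm}}$. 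The Bossel--Daners detour fails for the reason you note (the disk lies \emph{below} the square). So as written the second paragraph of your argument does not close.

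The way out is structural, not analytic, and it is why the statement is a corollary rather than a theorem. Since $\Omega$ is a \emph{disjoint} union of rectangles, split into two cases: either $\keig{2}{\Omega}{\alpha}=\keig{2}{\rect{a_1}{A_1}}{\alpha}$ for a single rectangular component (with $A_1\leq A$), or $\keig{2}{\Omega}{\alpha}=\max\{\keig{1}{\rect{a_2}{A_2}}{\alpha},\keig{1}{\rect{a_3}{A_3}}{\alpha}\}$ for two distinct components with $A_2+A_3\leq A$. (This also repairs your opening claim that the second eigenfunction must change sign, which is false on disconnected domains.) In the second case Theorem~\ref{thm:lambda1} applies directly to each component and the min--max over areas gives $\keig{1}{\sq{\sqrt{A/2}}}{\alpha}$. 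In the first case, separation of variables shows that the second eigenfunction of a rectangle is a product of one-dimensional eigenfunctions, so its nodal line is the straight perpendicular bisector of the longer side and the two nodal domains are congruent \emph{rectangles} $\rect{a/\sqrt{2}}{A_1/2}$; using the restriction of the eigenfunction as a test function for the all-Robin problem on such a half-rectangle gives $\keig{2}{\rect{a}{A_1}}{\alpha}>\keig{1}{\rect{a/\sqrt{2}}{A_1/2}}{\alpha}$, and then Theorem~\ref{thm:lambda1} finishes. The paper explicitly remarks that the rectangularity of these nodal domains is the essential point making Theorem~\ref{thm:lambda1} applicable; that is exactly the ingredient missing from your write-up.
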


\begin{proof}
Again, it suffices to prove the statement for $A=1$.

1. Suppose first $\Omega$ is a rectangle, say $\rect{a}{1}$ for some $a \geq 1$, which we assume to be centred at the origin. Then the zero (nodal) set of an eigenfunction corresponding to $\keig{2}{\rect{a}{1}}{\alpha}$ is given by the set $\{(0,y): y \in (-1/(2a), 1/(2a)) \}$, and $\keig{2}{\rect{a}{1}}{\alpha}$ is equal to the first eigenvalue of the Laplacian on the rectangle $(0,a/2) \times (-1/(2a), 1/(2a)) \simeq \rect{a/\sqrt{2}}{1/2}$ with Dirichlet conditions on the side $\{(0,y): y \in (-1/(2a), 1/(2a)) \}$ and Robin conditions with boundary coefficient $\alpha$ on the other three sides. Using the restriction of the eigenfunction for $\keig{2}{\rect{a}{1}}{\alpha}$ to either of its nodal domains $\rect{a/\sqrt{2}}{1/2}$ as a test function for the problem on $\rect{a/\sqrt{2}}{1/2}$ with Robin boundary conditions on all four sides yields
\begin{displaymath}
	\keig{2}{\rect{a}{1}}{\alpha} > \keig{1}{\rect{a/\sqrt{2}}{1/2}}{\alpha}.
\end{displaymath}
By our isoperimetric inequality, Theorem~\ref{thm:lambda1},
\begin{displaymath}
	\keig{1}{\rect{a/\sqrt{2}}{1/2}}{\alpha} \geq \keig{1}{\sq{\sqrt{1/2}}}{\alpha} = \keig{2}{\unionsquare{2}}{\alpha},
\end{displaymath}
und thus the corollary is true if $\Omega$ is a rectangle.

2. Suppose now that $\Omega$ is a union of at least two rectangles. There are two cases consider: (1) there exists a rectangle $\rect{a_1}{A_1}$ with $A_1<1$ such that $\keig{2}{\Omega}{\alpha} = \keig{2}{\rect{a_1}{A_1}}{\alpha}$; or (2) there exist two rectangles $\rect{a_2}{A_2}$ and $\rect{a_3}{A_3}$ belonging to $\Omega$, such that
\begin{displaymath}
	\keig{2}{\Omega}{\alpha} = \max \{ \keig{1}{\rect{a_2}{A_2}}{\alpha}, \keig{1}{\rect{a_3}{A_3}}{\alpha} \}.
\end{displaymath}

For case (1), apply our result for rectangles proved just above directly to $\rect{a_1}{A_1}$ and use that the eigenvalues are (strictly) monotonically decreasing with respect to homothetic scalings.

For case (2), applying Theorem~\ref{thm:lambda1} to each of $\rect{a_2}{A_2}$ and $\rect{a_3}{A_3}$ separately,
\begin{displaymath}
	\keig{2}{\Omega}{\alpha} \geq \max \{ \keig{1}{\sq{\sqrt{A_2}}}{\alpha}, \keig{1}{\sq{\sqrt{A_3}}}{\alpha} \}.
\end{displaymath}
This maximum is at least as large as $\keig{1}{\sq{\sqrt{1/2}}}{\alpha} = \keig{2}{\unionsquare{2}}{\alpha}$ since at least one of $A_2$ and $A_3$ is no larger than $1/2$. 
For strictness of the inequality in this case, assuming $\Omega$ not to be equal to $\unionsquare{2}$, if it has at least three connected components we may discard the superfluous one(s) and inflate $\rect{a_2}{A_2}$ and $\rect{a_3}{A_3}$ to decrease $\keig{1}{\Omega}{\alpha}$ strictly. So assume $\Omega = \rect{a_2}{A_2} \cup \rect{a_3}{A_3}$. Then either one of these rectangles is not a square, in which case Theorem~\ref{thm:lambda1} yields strict inequality, or one of them has area strictly less than $1/2$, in which case it follows from the assertion on strictness in Lemma~\ref{lem:homothetic}.
\end{proof}

Note that for the above argument it was important that the nodal domains associated with the second eigenfunction of a rectangle are themselves other rectangles, so that Theorem~\ref{thm:lambda1} is applicable.

\section{Convergence to the Dirichlet minimisers as $\alpha \to \infty$}
\label{sec:convergence-to-dirichlet}

It is well known and easy to show that if $\Omega \subset \R^2$ is any fixed domain, then, for any fixed $k\geq 1$, we have $\keig{k}{\Omega}{\alpha} \to \keig{k}{\Omega}{\infty}$ as $\alpha\to\infty$, where we recall that $\keig{k}{\Omega}{\infty}$ is the $k^{\rm th}$ Dirichlet eigenvalue (cf., e.g., \cite[Proposition~4.5]{bufrke}). Before we turn to the behaviour of the optimal values for small $\alpha>0$, or fixed $\alpha>0$ and large $k$, we will show that for any fixed $k\geq 1$ and $A>0$, we also have convergence of the optimal values to their Dirichlet counterparts as $\alpha \to \infty$. We note that this does not follow immediately from the convergence for each \emph{fixed} domain since, in general, the optimisers depend on $\alpha>0$.

\begin{theorem}
\label{thm:robin-dirichlet-convergence}
Fix $k\geq 1$ and $A>0$. Then, as $\alpha \to \infty$,
\begin{equation}
\label{eq:robin-dirichlet-convergence}
	\keigoptrect{k}{A}{\alpha} \to \keigoptrect{k}{A}{\infty} \qquad \text{and} \qquad \keigopt{k}{A}{\alpha} \to \keigopt{k}{A}{\infty}.
\end{equation}
Moreover, if $\alpha_n \to \infty$, then
\begin{enumerate}
\item if $\rect{a_n^\ast}{A}$ is any sequence of rectangles realising $\keigoptrect{k}{A}{\alpha}$, then up to a subsequence $a_n^\ast \to a^\ast$, where $\rect{a^\ast}{A}$ realises $\keigoptrect{k}{A}{\infty}$;
\item if $\alpha_n \to \infty$ and $\Omega_n^\ast$ realises $\keigopt{k}{A}{\alpha}$, then there exists some $\Omega^\ast$ realising $\keigopt{k}{A}{\alpha}$ such that, up to a subsequence, $\Omega_n^\ast$ and $\Omega^\ast$ all have the same number of connected components, and, if these are numbered appropriately, then statement (1) holds for each of them separately.
\end{enumerate}
\end{theorem}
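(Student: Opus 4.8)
\emph{Proof plan.} The plan is to prove the two convergence statements by a single two-sided scheme: a trivial upper bound from monotonicity in $\alpha$, and a matching lower bound from a compactness argument on the Robin optimisers combined with a diagonal limiting device. For the upper bound, since $\keig{k}{\Omega}{\alpha}\leq\keig{k}{\Omega}{\infty}$ for every admissible $\Omega$ and every $\alpha>0$ (Lemma~\ref{lem:parameter-continuity}), taking infima gives $\keigoptrect{k}{A}{\alpha}\leq\keigoptrect{k}{A}{\infty}$ and $\keigopt{k}{A}{\alpha}\leq\keigopt{k}{A}{\infty}$; moreover, both $\alpha\mapsto\keigoptrect{k}{A}{\alpha}$ and $\alpha\mapsto\keigopt{k}{A}{\alpha}$ are monotonically increasing, being infima of the increasing functions $\alpha\mapsto\keig{k}{\Omega}{\alpha}$. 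Hence both limits in \eqref{eq:robin-dirichlet-convergence} exist and are bounded above by the corresponding Dirichlet values, and (by monotonicity) it suffices to prove the reverse inequality along a single arbitrary sequence $\alpha_n\to\infty$; the same sequences will also deliver statements (1) and (2).

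For the rectangle case, fix $\alpha_n\to\infty$ and let $\rect{a_n^\ast}{A}$ with $a_n^\ast\geq 1$ realise $\keigoptrect{k}{A}{\alpha_n}$, as provided by Theorem~\ref{thm:existence}. Setting $\beta:=\inf_n\alpha_n>0$, the estimate $\keig{k}{\rect{a}{A}}{\alpha_n}\geq\keig{1}{\interv{A^{1/2}/a}}{\beta}\to\infty$ as $a\to\infty$ (as in the proof of Theorem~\ref{thm:existence}; cf.~Proposition~\ref{prop:firsteiginterv}), together with the uniform bound $\keig{k}{\rect{a_n^\ast}{A}}{\alpha_n}=\keigoptrect{k}{A}{\alpha_n}\leq\keigoptrect{k}{A}{\infty}$, forces the $a_n^\ast$ to be bounded; passing to a subsequence, $a_n^\ast\to a^\ast\geq 1$. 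The key step is a diagonal lower-semicontinuity argument: for any provisionally fixed $\alpha_0>0$, monotonicity in the boundary parameter gives $\keig{k}{\rect{a_n^\ast}{A}}{\alpha_n}\geq\keig{k}{\rect{a_n^\ast}{A}}{\alpha_0}$ once $\alpha_n\geq\alpha_0$, and continuity in the side length (Lemma~\ref{lem:length-continuity}) gives $\keig{k}{\rect{a_n^\ast}{A}}{\alpha_0}\to\keig{k}{\rect{a^\ast}{A}}{\alpha_0}$; hence $\liminf_n\keigoptrect{k}{A}{\alpha_n}\geq\keig{k}{\rect{a^\ast}{A}}{\alpha_0}$, and letting $\alpha_0\to\infty$ afterwards (Lemma~\ref{lem:parameter-continuity}) yields $\liminf_n\keigoptrect{k}{A}{\alpha_n}\geq\keig{k}{\rect{a^\ast}{A}}{\infty}\geq\keigoptrect{k}{A}{\infty}$. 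Combined with the upper bound this proves the first convergence in \eqref{eq:robin-dirichlet-convergence}, and, the whole chain now being a chain of equalities, $\keig{k}{\rect{a^\ast}{A}}{\infty}=\keigoptrect{k}{A}{\infty}$, i.e.\ $\rect{a^\ast}{A}$ realises the Dirichlet optimum --- this is statement (1).

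For unions of rectangles I would run the identical scheme on top of the normalisation used to prove Theorem~\ref{thm:existence}: for $\alpha_n\to\infty$ pick minimisers $\Omega_n^\ast$ for $\keigopt{k}{A}{\alpha_n}$ with at most $k$ ``needed'' connected components, and pass (pigeonhole) to a subsequence along which all $\Omega_n^\ast$ have the same number $\ell\leq k$ of components $U_n^1,\dots,U_n^\ell$, each $U_n^i$ contributing a fixed number $j(i)$ of eigenvalues to the first $k$ of $\Omega_n^\ast$, with $j(1)+\dots+j(\ell)=k$, so that $\keig{k}{\Omega_n^\ast}{\alpha_n}=\max_i\keig{j(i)}{U_n^i}{\alpha_n}$. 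Write $U_n^i=\rect{a_n^i}{A_n^i}$ with $a_n^i\geq 1$ and $A_n^1+\dots+A_n^\ell=A$. The areas $A_n^i$ are bounded above by $A$ and bounded below away from $0$ --- otherwise the shorter side of $U_n^i$ tends to $0$ and $\keig{1}{U_n^i}{\beta}\to\infty$, contradicting $\keig{k}{\Omega_n^\ast}{\alpha_n}\leq\keigopt{k}{A}{\infty}$ --- and the same interval estimate bounds the aspect ratios $a_n^i$; so after a further subsequence $A_n^i\to A^i>0$ and $a_n^i\to a^i\geq 1$. Since the scaling relation \eqref{eq:homothetic-scaling} relates $\keig{j}{\rect{a}{B}}{\alpha_0}$ to $\keig{j}{\rect{a}{1}}{B^{1/2}\alpha_0}$, the map $(a,B)\mapsto\keig{j}{\rect{a}{B}}{\alpha_0}$ is jointly continuous (Lemmas~\ref{lem:length-continuity} and~\ref{lem:parameter-continuity}), so the same ``monotonicity, then continuity, then $\alpha_0\to\infty$'' device (applied to each component, together with $\liminf_n\max_i\geq\max_i\liminf_n$) gives, with $\Omega^\ast:=\rect{a^1}{A^1}\cup\dots\cup\rect{a^\ell}{A^\ell}$ of total area $A$,
\[
\liminf_{n\to\infty}\keigopt{k}{A}{\alpha_n}=\liminf_{n\to\infty}\max_i\keig{j(i)}{U_n^i}{\alpha_n}\geq\max_i\keig{j(i)}{\rect{a^i}{A^i}}{\infty}\geq\keig{k}{\Omega^\ast}{\infty}\geq\keigopt{k}{A}{\infty},
\]
the third inequality because each component of $\Omega^\ast$ contributes its first $j(i)$ Dirichlet eigenvalues, all at most $\max_i\keig{j(i)}{\rect{a^i}{A^i}}{\infty}$, so $\Omega^\ast$ has at least $k$ eigenvalues below that value. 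Together with the upper bound this gives the second convergence in \eqref{eq:robin-dirichlet-convergence}, and collapsing the chain shows $\Omega^\ast$ is a Dirichlet minimiser with $\ell$ components, one ``matching'' each $U_n^i$; applying the Wolf--Keller principle (Lemma~\ref{lem:wolf-keller}, which applies verbatim in the Dirichlet case, cf.~\cite{woke}) to $\Omega^\ast$ then forces each $\rect{a^i}{A^i}$ to minimise $\keig{j(i)}{\,\cdot\,}{\infty}$ among rectangles of area $A^i$, which is statement (2).

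The main obstacle is the genuinely two-variable nature of the limit one has to control --- $a_n^\ast\to a^\ast$ \emph{while} $\alpha_n\to\infty$ --- against the fact that the available continuity (Lemma~\ref{lem:length-continuity}) is in the side length only, and only for a fixed finite boundary parameter. The resolution is precisely the diagonal device above: one squeezes $\keig{k}{\rect{a_n^\ast}{A}}{\alpha_n}$ from below by $\keig{k}{\rect{a_n^\ast}{A}}{\alpha_0}$ with $\alpha_0$ temporarily frozen, lets $n\to\infty$ using side-length continuity, and only afterwards sends $\alpha_0\to\infty$ using the monotone convergence of the Robin eigenvalue to the Dirichlet one. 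A subsidiary difficulty, present only for unions, is ruling out that a connected component degenerates --- area collapsing to zero, or aspect ratio blowing up --- which is handled by the same lower bound on the first Robin eigenvalue of a short interval that already underlies the existence proof.
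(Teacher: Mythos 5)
Your proposal is correct, and while its overall skeleton (compactness of the optimal side lengths, subsequence extraction, pigeonhole over connected components, then a squeeze against the monotone upper bound $\keigoptrect{k}{A}{\alpha}\leq\keigoptrect{k}{A}{\infty}$) matches the paper's, the key technical step is genuinely different. The paper proves the full two-variable convergence $\keig{k}{\interv{a_n}}{\alpha_n}\to\keig{k}{\interv{a}}{\infty}$ by the homothetic rescaling $\interv{a}=\tfrac{a}{a_n}\interv{a_n}$ combined with \eqref{eq:homothetic-scaling}, and then has to track, via a second pigeonhole, which mode $(i_\ell,j_\ell)$ realises each of the first $k$ eigenvalues of the optimal rectangle, identifying the limits by induction on $\ell$; it also needs a subsequence-of-subsequences argument at the end to upgrade from subsequential to full convergence. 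You replace all of this by a one-sided ``freeze $\alpha_0$, use monotonicity in the boundary parameter, pass to the limit in the side length, then send $\alpha_0\to\infty$'' device, which only yields lower semicontinuity along the diagonal --- but that is all the squeeze requires, since the upper bound comes for free from Lemma~\ref{lem:parameter-continuity}. You also dispense with the final subsequence argument by observing that $\alpha\mapsto\keigopt{k}{A}{\alpha}$ is itself monotone, so the limit exists a priori. The net effect is a shorter proof that avoids the mode bookkeeping entirely; what it gives up is the stronger intermediate fact, established in the paper's Step 3, that \emph{each} of the first $k$ eigenvalues of the optimisers converges to the corresponding Dirichlet eigenvalue of the limit domain.

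Two small points to tighten. First, your appeal to ``joint continuity'' of $(a,B)\mapsto\keig{j}{\rect{a}{B}}{\alpha_0}$ does not follow from separate continuity alone; it does follow immediately here because the eigenvalue is monotone in the boundary parameter, so $\keig{j}{\rect{a_n}{B_n}}{\alpha_0}=B_n^{-1}\keig{j}{\rect{a_n}{1}}{B_n^{1/2}\alpha_0}$ can be squeezed between the values at parameters $(B^{1/2}\mp\varepsilon)\alpha_0$ for large $n$ --- and in any case only a lower bound is needed for your $\liminf$. Second, Lemma~\ref{lem:wolf-keller} is stated for finite $\alpha$; your parenthetical that the Dirichlet version holds (indeed more simply, by the clean scaling of Dirichlet eigenvalues) is correct but should be said explicitly if this is written up.
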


The necessity for considering subsequences in the above theorem comes from the fact that the minimisers may not be unique for each fixed $k\geq 1$ and $\alpha \in (0,\infty]$ (indeed, in general this seems to be unknown).

One of the key tools in the proof is the following lemma, which will also play an important role in subsequent sections. It gives us control over long, thin rectangles by giving us an explicit estimate on the long side length of the rectangle necessary to ensure that the $k^{\rm th}$ eigenvalue corresponds to the $(k,1)$ mode (see Definition~\ref{def:i-j-mode}); this, in turn, can be estimated fairly explicitly using the bounds in Appendix~\ref{sec:interval}.

\begin{lemma}
\label{lem:k-1-mode-est}
Let $k \geq 1$, $A>0$, $\alpha>0$ and $a\geq 1$. Then $\keig{k}{\rect{A}{a}}{\alpha} = \eigmode{k}{1}{\rect{A}{a}}{\alpha}$ whenever
\begin{equation}
\label{eq:a-est}
	a \geq k^{1/2}.
\end{equation}
\end{lemma}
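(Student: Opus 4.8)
The plan is to exploit the separation-of-variables representation \eqref{eq:separation-of-variables} together with the ordering of interval eigenvalues on a short interval versus a long one. Write $\rect{a}{A}$ (I will use the side lengths $A^{1/2}a$ for the long side and $A^{1/2}/a$ for the short side, with $a\geq 1$). By \eqref{eq:separation-of-variables}, the eigenvalues of $\rect{a}{A}$ are exactly the numbers $\eigmode{i}{j}{\rect{a}{A}}{\alpha} = \keig{i}{\interv{A^{1/2}a}}{\alpha} + \keig{j}{\interv{A^{1/2}/a}}{\alpha}$ for $(i,j)\in\N\times\N$. To show that $\keig{k}{\rect{a}{A}}{\alpha}$ is the $(k,1)$ mode, it suffices to show that the $k$ numbers $\eigmode{1}{1}{\rect{a}{A}}{\alpha}, \eigmode{2}{1}{\rect{a}{A}}{\alpha}, \ldots, \eigmode{k}{1}{\rect{a}{A}}{\alpha}$ are all strictly less than every eigenmode $\eigmode{i}{j}{\rect{a}{A}}{\alpha}$ with $j\geq 2$; equivalently, since these $j=1$ modes are already ordered in $i$ (the interval eigenvalues being strictly increasing in their index), it is enough to prove
\[
	\keig{k}{\interv{A^{1/2}a}}{\alpha} + \keig{1}{\interv{A^{1/2}/a}}{\alpha} \leq \keig{1}{\interv{A^{1/2}a}}{\alpha} + \keig{2}{\interv{A^{1/2}/a}}{\alpha},
\]
because any mode with $j\geq 2$ has second summand at least $\keig{2}{\interv{A^{1/2}/a}}{\alpha}$ and first summand at least $\keig{1}{\interv{A^{1/2}a}}{\alpha}$, while the $j=1$ modes $\eigmode{i}{1}{}{}$ for $i=1,\dots,k$ are the $k$ smallest among the $j=1$ family. (One should be slightly careful to get the count exactly right, but this is the mechanism.) Rearranging, the goal reduces to the single inequality
\[
	\keig{k}{\interv{A^{1/2}a}}{\alpha} - \keig{1}{\interv{A^{1/2}a}}{\alpha} \leq \keig{2}{\interv{A^{1/2}/a}}{\alpha} - \keig{1}{\interv{A^{1/2}/a}}{\alpha}.
\]

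Now both sides can be bounded using the Appendix estimates on interval eigenvalues. For the left-hand side, $\keig{k}{\interv{b}}{\alpha} \leq \gamma_k(\interv{b}) = k^2\pi^2/b^2$, so with $b = A^{1/2}a$ the left-hand side is at most $k^2\pi^2/(A a^2)$; in fact since we are subtracting $\keig{1}{\interv{A^{1/2}a}}{\alpha}\geq 0$ we can just use $\keig{k}{\interv{A^{1/2}a}}{\alpha} \le k^2\pi^2/(Aa^2)$ as the bound for the difference. For the right-hand side we need a lower bound on the spectral gap $\keig{2}{\interv{c}}{\alpha} - \keig{1}{\interv{c}}{\alpha}$ of a short interval $\interv{c}$ with $c = A^{1/2}/a \le A^{1/2}$. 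As flagged in the introduction and the Appendix, the first Robin eigenvalue of a short interval is of order $c^{-1}$ (it is bounded, tending to $0$ like $\alpha\cdot\text{const}$ actually, or more precisely it behaves like $2\alpha/c$... in any case it is $O(c^{-1})$ and in the relevant regime bounded), while the second is of order $c^{-2}$ — specifically $\keig{2}{\interv{c}}{\alpha} \geq \pi^2/c^2$ is false (Robin is below Dirichlet), but one does have $\keig{2}{\interv{c}}{\alpha} \geq \mu_2(\interv{c}) = \pi^2/c^2$ — wait, the Neumann second eigenvalue of an interval of length $c$ is $\pi^2/c^2$, and Robin lies above Neumann, so $\keig{2}{\interv{c}}{\alpha} \geq \pi^2/c^2 = \pi^2 a^2/A$. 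For the first eigenvalue, Robin lies above Neumann's first which is $0$, and below Dirichlet's first $\pi^2/c^2$; a cheap and sufficient upper bound is $\keig{1}{\interv{c}}{\alpha} \le \pi^2/(2c^2)$ or even just $\keig{1}{\interv{c}}{\alpha} \le \gamma_1(\interv c)/4$ — but more cleanly, use the estimate from Appendix that $\keig{1}{\interv{c}}{\alpha}$ is small; I would look up the precise bound (Proposition~\ref{prop:firsteiginterv}) giving something like $\keig{1}{\interv{c}}{\alpha} \leq 2\alpha/c$ or a bound uniform enough that the gap $\keig{2}{\interv{c}}{\alpha} - \keig{1}{\interv{c}}{\alpha} \geq \tfrac34 \pi^2/c^2 = \tfrac34 \pi^2 a^2/A$ holds, say. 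Then the required inequality becomes $k^2\pi^2/(Aa^2) \leq \tfrac34\pi^2 a^2/A$, i.e. $a^4 \geq \tfrac43 k^2$, i.e. $a \geq (4/3)^{1/4} k^{1/2}$, which is close to the claimed $a\geq k^{1/2}$; sharpening the interval estimates (e.g. using $\keig{2}{\interv{c}}{\alpha}$ strictly above $\pi^2/c^2$ and $\keig{1}{\interv{c}}{\alpha}$ genuinely bounded rather than $O(c^{-2})$) should recover exactly $a\geq k^{1/2}$, and this is where I would invoke the sharp Appendix bounds rather than the crude Dirichlet/Neumann bracketing.

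The main obstacle is thus the bookkeeping of \emph{which} pairs $(i,j)$ can compete for the first $k$ eigenvalues — one must be sure that once $a$ is large enough, no mode with $j\geq 2$ sneaks below $\eigmode{k}{1}{\rect{a}{A}}{\alpha}$, and that the $\eigmode{i}{1}{}{}$ for $i\leq k$ are genuinely the $k$ smallest — and then getting the constant in the interval estimates sharp enough to land on the clean threshold $a\geq k^{1/2}$. The first part is a short monotonicity argument (interval eigenvalues strictly increasing in index); the second is purely a matter of quoting the right Proposition from Appendix~\ref{sec:interval} on the asymptotics $\keig{1}{\interv{c}}{\alpha} = O(c^{-1})$ and the lower bound on $\keig{2}{\interv{c}}{\alpha}$ of order $c^{-2}$. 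I expect the whole argument to be short once those ingredients are in place.
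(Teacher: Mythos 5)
Your reduction is exactly the one the paper uses: by monotonicity of interval eigenvalues in the index, it suffices to show $\eigmode{k}{1}{\rect{a}{A}}{\alpha} \leq \eigmode{1}{2}{\rect{a}{A}}{\alpha}$, and after discarding the nonnegative term $\keig{1}{\interv{A^{1/2}a}}{\alpha}$ this comes down to comparing the Dirichlet bound $\keig{k}{\interv{A^{1/2}a}}{\alpha} \leq \pi^2k^2/(Aa^2)$ with a lower bound on the spectral gap $\keig{2}{\interv{A^{1/2}/a}}{\alpha}-\keig{1}{\interv{A^{1/2}/a}}{\alpha}$ of the short interval. So the skeleton, including the bookkeeping about which $(i,j)$ can compete, is fine.

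The genuine gap is in the last step. What is needed to land exactly on $a\geq k^{1/2}$ is the bound
\begin{displaymath}
	\keig{2}{\interv{D}}{\alpha}-\keig{1}{\interv{D}}{\alpha} \geq \frac{\pi^2}{D^2}
\end{displaymath}
\emph{uniformly in $\alpha>0$}; this is a fundamental-gap inequality for the Robin Laplacian, which the paper quotes from Andrews--Clutterbuck--Hauer (\cite[Theorem~2.1]{anclha}), and it is not something you can recover by bracketing the two eigenvalues separately. Your substitute --- $\keig{2}{\interv{c}}{\alpha}\geq \mu_2(\interv{c})=\pi^2/c^2$ minus an upper bound on $\keig{1}{\interv{c}}{\alpha}$ --- cannot work uniformly: the best $\alpha$-independent upper bound on $\keig{1}{\interv{c}}{\alpha}$ is the Dirichlet value $\pi^2/c^2$ (which the first Robin eigenvalue actually approaches as $\alpha\to\infty$), so the resulting gap estimate $\pi^2/c^2 - \keig{1}{\interv{c}}{\alpha}$ degenerates to $0$; and with the bound $\keig{1}{\interv{c}}{\alpha}\leq 2\alpha/c$ you get $\pi^2/c^2-2\alpha/c$, which is negative once $\alpha>\pi^2/(2c)$ and in any case introduces an $\alpha$- and $A$-dependence into the threshold on $a$. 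Your interim claims that $\keig{1}{\interv{c}}{\alpha}\leq \pi^2/(2c^2)$, or that the gap is at least $\tfrac34\pi^2/c^2$ by these means, are respectively false for large $\alpha$ and unproven; and the ``sharpening'' you defer to would have to be precisely the cited gap theorem (or a direct analysis of the transcendental equations showing the gap exceeds $\pi^2/D^2$ for all $\alpha$), not a refinement of the one-sided bounds in the Appendix. With that single ingredient supplied, your computation gives $\pi^2k^2/(Aa^2)\leq \pi^2a^2/A$, i.e.\ $a\geq k^{1/2}$, exactly as in the paper.
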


\begin{proof}
It suffices to show that \eqref{eq:a-est} implies
\begin{displaymath}
	\eigmode{k}{1}{\rect{A}{a}}{\alpha} \leq \eigmode{1}{2}{\rect{A}{a}}{\alpha}.
\end{displaymath}
Now by definition of the $(1,2)$ mode,
\begin{displaymath}
	\eigmode{1}{2}{\rect{A}{a}}{\alpha} = \keig{1}{\interv{A^{1/2}a}}{\alpha} + \keig{2}{\interv{A^{1/2}a^{-1}}}{\alpha} 
	\geq \keig{2}{\interv{A^{1/2}a^{-1}}}{\alpha},
\end{displaymath}
while
\begin{displaymath}
	\eigmode{k}{1}{\rect{A}{a}}{\alpha} = \keig{k}{\interv{A^{1/2}a}}{\alpha} + \keig{1}{\interv{A^{1/2}a^{-1}}}{\alpha} 
	\leq \keig{k}{\interv{A^{1/2}a}}{\infty} + \keig{1}{\interv{A^{1/2}a^{-1}}}{\alpha}.
\end{displaymath}
We now invoke the estimate on the Fundamental Gap
\begin{displaymath}
	\keig{2}{\interv{D}}{\alpha} - \keig{1}{\interv{D}}{\alpha} \geq \frac{\pi^2}{D^2}
\end{displaymath}
valid for the Robin Laplacian in one dimension (or more generally on any domain whose first Robin eigenfunction is log-concave, see \cite[Theorem~2.1]{anclha}), with the choice $D=A^{1/2}a^{-1}$. We thus have
\begin{displaymath}
	\eigmode{k}{1}{\rect{A}{a}}{\alpha} - \eigmode{1}{2}{\rect{A}{a}}{\alpha} 
	\leq \keig{k}{\interv{A^{1/2}a}}{\infty} - \frac{\pi^2 a^2}{A}
	= \frac{\pi^2 k^2}{A a^2} - \frac{\pi^2 a^2}{A}.
\end{displaymath}
This is non-positive as long as $a\geq k^{1/2}$.
\end{proof}

\begin{proof}[Proof of Theorem~\ref{thm:robin-dirichlet-convergence}]
Fix $k\geq 1$ and an arbitrary sequence $\alpha_n \to \infty$; obviously, it suffices to prove the theorem for this sequence.

1. We start with rectangles. We first claim the existence of an $\hat a\geq 1$ such that for all $n \geq 1$ the optimal rectangle $\rect{a_n^\ast}{A}$ satisfies $a_n^\ast \leq \hat a$, i.e., the sequence of optimal rectangles is uniformly bounded in $n$. In fact, by Lemma~\ref{lem:k-1-mode-est} and the lower bound in \eqref{eq:eig1ineq1}, if $a \geq k^{1/2}$, then we have
\begin{displaymath}
\begin{split}
	\keig{k}{\rect{a_n^\ast}{A}}{\alpha} = \eigmode{k}{1}{\rect{a_n^\ast}{A}}{\alpha} &\geq \keig{1}{\interv{A^{1/2}a^{-1}}}{\alpha}\\
	&\geq \frac{2\pi^2\alpha}{A^{1/2}a^{-1}(\pi^2 + 2A^{1/2}\alpha a^{-1})} \to \infty
\end{split}
\end{displaymath}
uniformly in $\alpha \geq \alpha_1 > 0$ as $a \to \infty$. This proves the claim.

2. Since $(a_n^\ast)_{n\geq 1}$ is bounded, up to a subsequence we have $a_n^\ast \to \tilde{a}$ for some $\tilde{a} \in [1,\hat a]$. We claim that the corresponding rectangle $\rect{\tilde{a}}{A}$ realises $\keigoptrect{k}{A}{\infty}$. Now by the pigeonhole principle, up to another sequence, for each $\ell = 1,\ldots,k$ there exist $1\leq i_\ell,j_\ell\leq \ell$ such that
\begin{displaymath}
	\keig{\ell}{\rect{a_n^\ast}{A}}{\alpha} = \eigmode{i_\ell}{j_\ell}{\rect{a_n^\ast}{A}}{\alpha} = \keig{i_\ell}{\interv{A^{1/2}a_n^\ast}}{\alpha}
	+ \keig{j_\ell}{\interv{A^{1/2}(a_n^\ast)^{-1}}}{\alpha}
\end{displaymath}
for all $n$.

3. We claim that for any $k\geq 1$ and any sequences of numbers $a_n\geq 1$ such that $a_n \to a$ and $\alpha_n \to \infty$, we have
\begin{equation}
\label{eq:1-d-diag-conv}
	\keig{k}{\interv{a_n}}{\alpha_n} \to \keig{k}{\interv{a}}{\infty}.
\end{equation}
To prove \eqref{eq:1-d-diag-conv}, we rescale, cf.~\eqref{eq:homothetic-scaling}: writing $\interv{a} = \frac{a}{a_n}\interv{a_n}$,
\begin{displaymath}
	\keig{k}{\interv{a_n}}{\alpha_n} = \left(\frac{a}{a_n}\right)^2 \keig{k}{\interv{a}}{\alpha_n a_n/a}.
\end{displaymath}
Since $a/a_n \to 1$ and $\alpha_n a_n/a \to \infty$ as $n\to \infty$, it follows from Lemma~\ref{lem:length-continuity} that
\begin{displaymath}
	\keig{k}{\interv{a}}{\alpha_n a_n/a} \to \keig{k}{\interv{a}}{\infty},
\end{displaymath}
which proves the claim.

4. Combining Steps 3 and 4, we conclude that
\begin{displaymath}
	\keig{\ell}{\rect{a_n^\ast}{A}}{\alpha} = \eigmode{i_\ell}{j_\ell}{\rect{a_n^\ast}{A}}{\alpha}\to 
	\eigmode{i_\ell}{j_\ell}{\rect{\tilde{a}}{A}}{\infty}
\end{displaymath}
for all $\ell=1,\ldots,k$. y induction on $\ell$, we also obtain $\eigmode{i_\ell}{j_\ell}{\rect{\tilde{a}}{A}}{\infty} = \keig{\ell}{\rect{\tilde{a}}{A}}{\infty}$ and in particular $\keig{k}{\rect{a_n^\ast}{A}}{\alpha} \to \keig{k}{\rect{\tilde{a}}{A}}{\infty} $. Since
\begin{displaymath}
	\keig{k}{\rect{a_n^\ast}{A}}{\alpha} = \keigoptrect{k}{A}{\alpha} \leq \keigoptrect{k}{A}{\infty} \leq \keig{k}{\rect{\tilde{a}}{A}}{\infty},
\end{displaymath}
the first inequality following since the same is true of any fixed domain, we thus have $\keigoptrect{k}{A}{\infty} = \keig{k}{\rect{\tilde{a}}{A}}{\infty}$, and $\rect{\tilde{a}}{A}$ is a minimiser. Moreover, since we have shown that every sequence $\alpha_n \to \infty$ has a subsequence for which $\keigoptrect{k}{A}{\alpha_n} \to \keigoptrect{k}{A}{\infty}$ for this subsequence, the hair-splitting lemma implies the convergence of the whole sequence.

5. Finally, we deal with unions of rectangles. We assume that $\Omega_n^\ast$, not necessarily connected, realises $\keigopt{k}{A}{\alpha_n}$. Up to a subsequence each $\Omega_n^\ast$ has some fixed number $m\geq 1$ of connected components (i.e., rectangles) $U_{1,n},\ldots, U_{m,n}$ and (up to a further subsequence and a possible relabelling of the $U_{j,n}$) there exist numbers $i_1,\ldots,i_m$ such that $i_1+\ldots +i_m = k$ and
\begin{displaymath}
	\keig{k}{\Omega_n^\ast}{\alpha_n} = \keig{i_1}{U_{1,n}}{\alpha_n}=\ldots =\keig{i_m}{U_{m,n}}{\alpha_n}
\end{displaymath}
for all $n$ (see Lemma~\ref{lem:wolf-keller}). Now the argument of Steps 2 and 3, applied to each of the connected components, implies the existence of a domain
\begin{displaymath}
	\Omega^\ast = U_1 \cup \ldots \cup U_m
\end{displaymath}
for rectangles $U_1,\ldots,U_m$, such that $|\Omega^\ast|=A$ and, up to a further subsequence,
\begin{displaymath}
	\keig{\ell}{\Omega_n^\ast}{\alpha_n} \to \keig{\ell}{\Omega^\ast}{\infty}
\end{displaymath}
as $n\to \infty$ for each $\ell=1,\ldots,k$, since for each connected component we can find a further subsequence for which this is true for that connected component. In particular,
\begin{displaymath}
	\keig{k}{A}{\infty} \geq \keigopt{k}{A}{\alpha_n} = \keig{k}{\Omega_n^\ast}{\alpha_n} \to \keig{k}{\Omega^\ast}{\infty}
	\geq \keig{k}{A}{\infty},
\end{displaymath}
implying the optimality of $\Omega^\ast$. Moreover, the same argument as before using the hair-splitting lemma implies $\keigopt{k}{A}{\alpha_n} \to \keig{k}{A}{\infty}$ for the whole sequence.
\end{proof}

\section{Optimal rectangles: Proof of Theorem~\ref{thm:rectangles}}
\label{sec:opt-rect}

In this section we prove that $\keigoptrect{k}{A}{\alpha}$ grows like $k^{2/3}$ for fixed $\alpha$, at the same time giving asymptotically sharp two-sided bounds. The argument consists of two parts: firstly, we obtain the desired estimate for $(k,1)$ modes (cf.~Definition~\ref{def:i-j-mode}); then we show that for $k$ large enough (depending on $\alpha$ and $A$) the $k^{\rm th}$ eigenvalue of the optimal rectangle is given by its $(k,1)$ mode. For this, we will need to introduce and give a rough estimate on the eigenvalue counting function of the Robin Laplacian on a fixed domain. We present each part in a separate subsection.

\subsection{Two-sided bounds on the $(k,1)$ mode}

We start with the $(k,1)$ mode. Note that the bounds in the following estimate correspond exactly to those in Theorem~\ref{thm:rectangles}, although we have them for a different range of $A$, $\alpha$, $k$. Notationally, we set
\begin{equation}
\label{eq:k-1-mode-opt}
	\eigmodeoptrect{k}{1}{A}{\alpha} := \inf \left\{ \eigmode{k}{1}{\rect{a}{A}}{\alpha} : a\geq 1\right\}
\end{equation}
to be the smallest value attainable by a $(k,1)$ mode.

\begin{lemma}
\label{lem:k-1-mode-bounds}
For any $A>0$, $\alpha>0$ and $k\geq 2$, we have the bounds
\begin{equation}
\label{eq:k-1-mode-bounds}
	\fr{3\pi^2\alpha^{2/3}}{\left(\pi^2+2A^{1/2}\alpha\right)^{2/3}A^{2/3}}(k-2)^{2/3} \leq \eigmodeoptrect{k}{1}{A}{\alpha} 
	\leq 3\left(\fr{\pi\alpha}{A}\right)^{2/3} k^{2/3},
\end{equation}
the upper bound holding provided $\alpha \leq \pi^2 A^{-1/2}k^2$. Moreover, the infimum in \eqref{eq:k-1-mode-opt} is attained by a rectangle whose side length is proportional to $k^{2/3}$ for large $k$ and fixed $A,\alpha>0$. Finally,
\begin{equation}
\label{eq:k-1-opt-asymptotics}
	\lim_{k\to\infty} \frac{\eigmodeoptrect{k}{1}{A}{\alpha}}{k^{2/3}} = 3\left(\fr{\pi\alpha}{A}\right)^{2/3}.
\end{equation}
\end{lemma}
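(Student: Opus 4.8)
The plan is to reduce the two-sided estimate on $\eigmodeoptrect{k}{1}{A}{\alpha}$ to the one-dimensional interval bounds collected in the Appendix, since by definition
\[
\eigmode{k}{1}{\rect{a}{A}}{\alpha} = \keig{k}{\interv{A^{1/2}a}}{\alpha} + \keig{1}{\interv{A^{1/2}a^{-1}}}{\alpha},
\]
and the task is to minimise this over $a\geq 1$. The heuristic driving the whole computation is that, by the asymptotics recorded at the end of the introduction, $\keig{k}{\interv{b}}{\alpha} \sim \pi^2 k^2 / b^2$ behaves essentially like the Dirichlet eigenvalue when $b$ is large (the long side), whereas $\keig{1}{\interv{b}}{\alpha} \sim 2\alpha/b$ behaves like $2\alpha/b$ when $b$ is small (the short side). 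So with $b = A^{1/2}a$ we are, to leading order, minimising $\pi^2 k^2/(Aa^2) + 2\alpha a / A^{1/2}$ over $a$, whose minimiser is $a \propto k^{2/3}$ and whose minimum value is $3(\pi\alpha/A)^{2/3}k^{2/3}$ — exactly the claimed leading term. This already explains the asymptotic statement \eqref{eq:k-1-opt-asymptotics} and the claim that the optimal side length grows like $k^{2/3}$; the work is to turn the heuristic into rigorous inequalities.

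First I would establish the upper bound: simply plug a well-chosen trial value $a = c\, k^{2/3}$ (for the explicit constant $c$ coming from the leading-order minimisation, in terms of $A,\alpha$) into the sum above and bound each term from above using the Appendix estimates — $\keig{k}{\interv{A^{1/2}a}}{\alpha} \leq \gamma_k(\interv{A^{1/2}a}) = \pi^2 k^2/(Aa^2)$ and $\keig{1}{\interv{A^{1/2}a^{-1}}}{\alpha} \leq 2\alpha a/A^{1/2}$ (the latter valid once $A^{1/2}a^{-1}\leq$ something, which forces a lower bound on $k$, hence the condition $\alpha \leq \pi^2 A^{-1/2}k^2$). Summing gives $\eigmodeoptrect{k}{1}{A}{\alpha} \leq 3(\pi\alpha/A)^{2/3}k^{2/3}$. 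Next the lower bound: here I need a uniform-in-$a$ lower estimate. Using the monotonicity of $\keig{1}{\interv{b}}{\alpha}$ in $b$ together with the explicit lower bound $\keig{1}{\interv{b}}{\alpha} \geq 2\pi^2\alpha/(b(\pi^2+2b\alpha))$ from \eqref{eq:eig1ineq1} (this is where the factor $(\pi^2+2A^{1/2}\alpha)^{2/3}$ enters, after controlling $b=A^{1/2}a^{-1}\leq A^{1/2}$), and a lower bound of Dirichlet type for the long interval — more precisely $\keig{k}{\interv{A^{1/2}a}}{\alpha} \geq \keig{k}{\interv{A^{1/2}a}}{0} = \pi^2(k-1)^2/(Aa^2)$, or some variant giving the $(k-2)$ shift after absorbing lower-order terms — I would then minimise the resulting explicit lower bound over $a\geq 1$ by elementary calculus to get $3\pi^2\alpha^{2/3}(k-2)^{2/3}/((\pi^2+2A^{1/2}\alpha)^{2/3}A^{2/3})$.

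For the assertion that the infimum in \eqref{eq:k-1-mode-opt} is \emph{attained}, and attained at a rectangle with side length proportional to $k^{2/3}$: attainment follows as in Theorem~\ref{thm:existence} (the map $a\mapsto \eigmode{k}{1}{\rect{a}{A}}{\alpha}$ is continuous by Lemma~\ref{lem:length-continuity}, tends to $+\infty$ as $a\to\infty$ because the short-interval term $\keig{1}{\interv{A^{1/2}a^{-1}}}{\alpha}$ blows up, and as $a\to 1$ it stays bounded, so the infimum is achieved on a compact interval). To pin down the $k^{2/3}$ growth of the optimiser $a_k^\ast$, I would sandwich it: if $a_k^\ast$ were much smaller than $k^{2/3}$ the long-interval term $\keig{k}{\interv{A^{1/2}a_k^\ast}}{\alpha}\gtrsim k^2/(a_k^\ast)^2$ would already exceed the upper bound $3(\pi\alpha/A)^{2/3}k^{2/3}$ just proved; if it were much larger, the short-interval term $\keig{1}{\interv{A^{1/2}(a_k^\ast)^{-1}}}{\alpha}\gtrsim \alpha a_k^\ast$ would do the same. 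Finally, \eqref{eq:k-1-opt-asymptotics} follows by squeezing: the two-sided bounds \eqref{eq:k-1-mode-bounds}, divided by $k^{2/3}$, both converge to $3(\pi\alpha/A)^{2/3}$ as $k\to\infty$ (the $(k-2)^{2/3}/k^{2/3}\to 1$ on the left), so the limit is forced.

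The main obstacle I anticipate is not conceptual but bookkeeping: getting the lower bound in the \emph{exact} stated form with the shift $k-2$ and the precise constant requires combining the somewhat lossy explicit interval estimates from the Appendix (valid only in certain parameter ranges, hence the side conditions $\alpha\leq\pi^2A^{-1/2}k^2$ and implicitly $k\geq 2$) and then carrying out the one-variable minimisation cleanly enough that the error terms collapse into the $-2$ shift rather than producing a messier lower-order correction. One has to be careful that the range restrictions on the interval estimates are compatible with the full range $a\geq 1$ over which the infimum is taken — in particular handling moderate $a$ (neither large nor close to $1$) may require splitting into cases or using a cruder but globally valid bound there.
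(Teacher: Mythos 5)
Your treatment of the two-sided bounds \eqref{eq:k-1-mode-bounds}, the attainment of the infimum, and the $k^{2/3}$ growth of the optimiser follows essentially the paper's route: decompose $\eigmode{k}{1}{\rect{a}{A}}{\alpha}$ into the two interval eigenvalues, bound the long-interval term by $\pi^2k^2/(Aa^2)$ from above and by (roughly) $\pi^2(k-2)^2/(Aa^2)$ from below, bound the short-interval term via \eqref{eq:main-interval-bound} and \eqref{eq:eig1ineq1}, and minimise in $a$. Two harmless variations: you get the lower bound on $\keig{k}{\interv{A^{1/2}a}}{\alpha}$ by comparison with the Neumann eigenvalue $\pi^2(k-1)^2/(Aa^2)$ rather than the paper's nodal-domain identity (which yields an interval of length in $[A^{1/2}a/k,\,A^{1/2}a/(k-2)]$); this is fine and in fact gives $(k-1)^{2/3}$, slightly stronger than the stated $(k-2)^{2/3}$. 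Also, the side condition $\alpha\leq\pi^2A^{-1/2}k^2$ does not come from a restriction on \eqref{eq:main-interval-bound} (which is unconditional) but from requiring that the optimal \emph{Ansatz} value $a=\pi^{2/3}A^{-1/6}\alpha^{-1/3}k^{2/3}$ satisfies $a\geq 1$, i.e.\ is an admissible rectangle.

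There is, however, a genuine gap in your proof of \eqref{eq:k-1-opt-asymptotics}. You claim the limit follows by squeezing because ``the two-sided bounds \eqref{eq:k-1-mode-bounds}, divided by $k^{2/3}$, both converge to $3(\pi\alpha/A)^{2/3}$''. They do not: the lower bound divided by $k^{2/3}$ converges to
\begin{displaymath}
	\fr{3\pi^2\alpha^{2/3}}{\left(\pi^2+2A^{1/2}\alpha\right)^{2/3}A^{2/3}},
\end{displaymath}
which is \emph{strictly} smaller than $3(\pi\alpha/A)^{2/3}=3\pi^{2/3}\alpha^{2/3}A^{-2/3}$, since $(\pi^2+2A^{1/2}\alpha)^{2/3}>\pi^{4/3}$ for every $\alpha>0$. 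The loss comes from the crude step $\pi^2+2A^{1/2}\alpha a^{-1}\leq \pi^2+2A^{1/2}\alpha$ (valid only because $a\geq 1$) in the denominator of the short-interval lower bound. To close the gap you must use an ingredient you have already established, namely that the optimal side length $a_k^\ast$ is proportional to $k^{2/3}$ and hence tends to infinity: for $k$ large, $2A^{1/2}\alpha/a_k^\ast<\varepsilon$, so the short-interval term of the optimiser is at least $2\pi^2\alpha a_k^\ast/\bigl(A^{1/2}(\pi^2+\varepsilon)\bigr)$. Redoing the one-variable minimisation with this refined coefficient and then letting $k\to\infty$ followed by $\varepsilon\to 0$ recovers the constant $3(\pi\alpha/A)^{2/3}$ from below, and the squeeze with the upper bound then gives \eqref{eq:k-1-opt-asymptotics}. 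This extra refinement step is exactly how the paper concludes, and without it the asymptotic statement is not proved.
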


\begin{proof}
By definition of the $(k,1)$ mode, we have
\begin{equation}
\label{eq:k-1-breakdown}
\begin{split}
	\eigmode{k}{1}{\rect{a}{A}}{\alpha} &= \keig{k}{\interv{A^{1/2}a}}{\alpha} + \keig{1}{\interv{A^{1/2}a^{-1}}}{\alpha}\\
	&= \keig{1}{\interv{A^{1/2}\tilde a}}{\infty} + \keig{1}{\interv{A^{1/2}a^{-1}}}{\alpha}
\end{split}
\end{equation}
for some $\tilde a \in \left[\frac{a}{k},\frac{a}{k-2}\right]$, where the second equality comes about from restricting to any one of the identical $k-2$ nodal domains of the corresponding eigenfunction which do not touch the shorter sides of the rectangle (cf.~Remark~\ref{rem:i-j-mode}). For the upper bound, we use the monotonicity of the Dirichlet eigenvalue with respect to shrinking the interval and the bound \eqref{eq:main-interval-bound}, applied to $\keig{1}{\interv{A^{1/2}\tilde a}}{\infty}$ and $\keig{1}{\interv{A^{1/2}a^{-1}}}{\alpha}$, respectively, to obtain
\begin{displaymath}
	\eigmode{k}{1}{\rect{a}{A}}{\alpha} \leq \frac{\pi^2 k^2}{Aa^2} + \frac{2\alpha a}{A^{1/2}}.
\end{displaymath}
We now make the \emph{Ansatz} $a=c_1 k^{2/3}$ (for some $c_1>0$ which may \emph{a priori} depend on $k$, i.e., formally, we take $c_1:=ak^{-2/3}$); then, switching to considering the infimum over all $a\geq 1$,
\begin{displaymath}
	\eigmodeoptrect{k}{1}{A}{\alpha} \leq \inf_{c_1\geq k^{-2/3}} \pi^2k^{2/3}\left[ \frac{1}{Ac_1^2}+\frac{2\alpha}{\pi^2 A^{1/2}}c_1\right].
\end{displaymath}
The infimum over $c_1>0$ is obtained independently of $k\geq 1$ at
\begin{equation}
\label{eq:c1}
	c_1=\pi^{2/3}A^{-1/6}\alpha^{-1/3},
\end{equation}
resulting in a right-hand side of value
\begin{displaymath}
	3\pi^{2/3}A^{-2/3}\alpha^{2/3}k^{2/3};
\end{displaymath}
this is valid provided this minimum occurs when $a=c_1 k^{2/3} \geq 1$, that is,
\begin{displaymath}
	\pi^{2/3}A^{-1/6}\alpha^{-1/3}k^{2/3} \geq 1,
\end{displaymath}
which after simplification reduces to $\alpha \leq \pi A^{-1/2}k^2$. Observe also that the choice of $c_1$ corresponds to
\begin{equation}
\label{eq:a-upper-bound}
	a = \pi^{2/3}A^{-1/6}\alpha^{-1/3}k^{2/3} \sim k^{2/3}
\end{equation}
as $k\to \infty$, if the other parameters are fixed. For the lower bound, we again start from \eqref{eq:k-1-breakdown} but this time stretch the Dirichlet interval and use the lower bound in \eqref{eq:eig1ineq1} to obtain
\begin{equation}
\label{eq:k-1-lower-bound}
\begin{split}
	\eigmode{k}{1}{\rect{a}{A}}{\alpha} &\geq\frac{\pi^2 (k-2)^2}{Aa^2}+\frac{2\pi^2\alpha a}{A^{1/2}\left(\pi^2+\frac{2A^{1/2}\alpha}{a}\right)}\\
	&\geq \frac{\pi^2(k-2)^2}{Aa^2} + \frac{2\pi^2\alpha a}{\pi^2 A^{1/2} + 2A\alpha},
\end{split}
\end{equation}
the last inequality following since $a\geq 1$. We now make the \emph{Ansatz} $a=c_2 (k-2)^{2/3}$ to obtain
\begin{displaymath}
	\eigmodeoptrect{k}{1}{A}{\alpha} \geq \inf_{c_2\geq (k-2)^{-2/3}} \pi^2(k-2)^{2/3} \left[\frac{1}{A c_2^2} + \frac{2\alpha}{\pi^2 A^{1/2}
	+2A\alpha}c_2\right].
\end{displaymath}
Obviously, the infimum can only become smaller if we look at all $c_2>0$; in this case, we again obtain a global minimiser at a value of $c_2$ independent of $k$, namely
\begin{equation}
\label{eq:a-lower-bound}
	c_2 = \left(\frac{A^{1/2}\alpha}{\pi^2+2A^{1/2}\alpha}\right)^{-1/3},
\end{equation}
corresponding to $a=c_2(k-2)^{2/3} \sim k^{2/3}$ and resulting in the lower bound
\begin{displaymath}
	\eigmodeoptrect{k}{1}{A}{\alpha} \geq 3\pi^2\left(\pi^2 + 2A^{1/2}\alpha\right)^{-2/3}A^{-2/3}\alpha^{2/3}(k-2)^{2/3}.
\end{displaymath}
In particular, this together with a standard compactness argument establishes the existence of a minimiser for $\eigmodeoptrect{k}{1}{A}{\alpha}$ for every admissible combination of parameters. Moreover, combined with the upper bound it also shows that the long side length of the optimiser must be proportional to $k^{2/3}$.

Finally, to establish \eqref{eq:k-1-opt-asymptotics}, we refine the second inequality in \eqref{eq:k-1-lower-bound}. Namely, since we now know that the optimal side length behaves like $k^{2/3}$ and in particular tends to $\infty$ with $k$, for every $\varepsilon>0$ there exists some $k_\varepsilon \geq 1$ such that $2A\alpha/a_k^\ast < \varepsilon$ for all $k\geq k_\varepsilon$, where $a_k^\ast$ is the optimal side length value corresponding to $\eigmodeoptrect{k}{1}{A}{\alpha}$. This leads to the improved lower bound
\begin{displaymath}
	\eigmodeoptrect{k}{1}{A}{\alpha} \geq \inf_{c_2>0} \pi^2(k-2)^{2/3} \left[\frac{1}{A c_2^2} + \frac{2\alpha}{\pi^2 A^{1/2}
	+\varepsilon}c_2\right],
\end{displaymath}
provided $k\geq k_\varepsilon$ is large enough. This, in turn, leads to
\begin{displaymath}
	\frac{\eigmodeoptrect{k}{1}{A}{\alpha}}{k^{2/3}} \geq 3\pi^2\left(\pi^2+\varepsilon\right)^{-2/3}A^{-2/3}
	\alpha^{2/3}\left(\frac{k-2}{k}\right)^{2/3}
\end{displaymath}
for all $k\geq k_\varepsilon$. Letting $k\to \infty$ and then passing to the limit as $\varepsilon\to 0$ yields \eqref{eq:k-1-opt-asymptotics}, when combined with the upper bound from \eqref{eq:k-1-mode-bounds}.
\end{proof}

\begin{remark}
\label{rem:k-1-balance}
The power $k^{2/3}$ comes from balancing the effect of the first Dirichlet eigenvalue of an interval of length $\sim a/k$ with the first Robin eigenvalue of an interval of length $\sim a^{-1}$.
\end{remark}

\subsection{An estimate on the eigenvalue counting function of the Robin Laplacian on rectangles}
\label{sec:eigcount}

For given numbers $A,\alpha>0$ and $a\geq 1$, we introduce the counting function
\begin{equation}
\label{eq:eigcount-def}
	\eigcount{\rect{a}{A}}{\alpha}{\lambda} := \# \{k: \keig{k}{\rect{a}{A}}{\alpha} \leq \lambda \},
\end{equation}
for positive values of the parameter $\lambda$. We will give a simple but effective upper estimate on this function.

\begin{lemma}
\label{lem:eigcount}
Fix $\alpha>0$ and $A>0$. Then, for any $a \geq 1$,
\begin{equation}
\label{eq:eigcount-bound}
	\eigcount{\rect{a}{A}}{\alpha}{\lambda} \leq \frac{\lambda A}{\pi^2} + \frac{(\lambda A)^{1/2}}{\pi}\left(a+\frac{1}{a}\right) + 1
\end{equation}
for all $\lambda>0$.
\end{lemma}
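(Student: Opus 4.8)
The plan is to turn the two-dimensional counting problem into a one-dimensional one via separation of variables, and then to bound the interval eigenvalues below by the explicit Neumann eigenvalues, reducing everything to counting integer pairs in a box.

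First I would use separation of variables, cf.~\eqref{eq:separation-of-variables} and Definition~\ref{def:i-j-mode}: the spectrum of $\rect{a}{A}$, counted with multiplicity, consists exactly of the values $\eigmode{i}{j}{\rect{a}{A}}{\alpha}$ as $(i,j)$ ranges over $\N\times\N$, each pair contributing one eigenvalue (cf.~Remark~\ref{rem:i-j-mode}). Hence
\begin{displaymath}
	\eigcount{\rect{a}{A}}{\alpha}{\lambda} = \#\left\{(i,j)\in\N\times\N : \keig{i}{\interv{A^{1/2}a}}{\alpha} + \keig{j}{\interv{A^{1/2}/a}}{\alpha} \leq \lambda\right\}.
\end{displaymath}
Next, the monotonicity of $\alpha\mapsto\keig{k}{\,\cdot\,}{\alpha}$ from Lemma~\ref{lem:parameter-continuity} gives, for every $b>0$ and $k\geq 1$, the lower bound $\keig{k}{\interv{b}}{\alpha}\geq \keign{k}{\interv{b}} = \pi^2(k-1)^2/b^2$ (the Neumann eigenvalue of an interval of length $b$). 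Applying this with $b=A^{1/2}a$ and $b=A^{1/2}/a$ respectively, any pair $(i,j)$ counted above must satisfy
\begin{displaymath}
	\frac{\pi^2(i-1)^2}{Aa^2} + \frac{\pi^2(j-1)^2 a^2}{A} \leq \keig{i}{\interv{A^{1/2}a}}{\alpha} + \keig{j}{\interv{A^{1/2}/a}}{\alpha} \leq \lambda.
\end{displaymath}

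Since both terms on the left are non-negative, such a pair satisfies each of the estimates $(i-1)^2\leq \lambda A a^2/\pi^2$ and $(j-1)^2\leq \lambda A/(\pi^2 a^2)$ separately, i.e.\ $i-1\leq a(\lambda A)^{1/2}/\pi$ and $j-1\leq (\lambda A)^{1/2}/(\pi a)$. The number of admissible $i$ is then at most $\lfloor a(\lambda A)^{1/2}/\pi\rfloor + 1 \leq a(\lambda A)^{1/2}/\pi + 1$, and similarly for $j$; multiplying these two one-dimensional counts and expanding yields
\begin{displaymath}
	\eigcount{\rect{a}{A}}{\alpha}{\lambda} \leq \left(\frac{a(\lambda A)^{1/2}}{\pi} + 1\right)\left(\frac{(\lambda A)^{1/2}}{\pi a} + 1\right) = \frac{\lambda A}{\pi^2} + \frac{(\lambda A)^{1/2}}{\pi}\left(a + \frac{1}{a}\right) + 1,
\end{displaymath}
which is the claimed bound.

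There is no serious obstacle here; the argument is elementary once the right ingredients are assembled. The only points requiring a little care are the bookkeeping in the separation-of-variables step (that it genuinely enumerates \emph{all} eigenvalues with the correct multiplicity), and the decision to discard the elliptic constraint in favour of the two independent coordinate bounds — this is what produces the clean closed form on the right-hand side. A sharper lattice-point count would only refine the lower-order terms, which is unnecessary for the applications of this lemma in Section~\ref{sec:opt-rect}.
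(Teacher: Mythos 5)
Your proof is correct and follows essentially the same route as the paper: reduce to the Neumann case via monotonicity in $\alpha$ (you do this at the level of the interval eigenvalues, the paper at the level of the rectangle, which is immaterial), then bound the count of admissible lattice pairs $(i,j)$ by the number of lattice points in the bounding box with the same axis intercepts as the ellipse. The resulting product $\bigl(a(\lambda A)^{1/2}/\pi+1\bigr)\bigl((\lambda A)^{1/2}/(\pi a)+1\bigr)$ is exactly the paper's bound, so nothing further is needed.
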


Observe that the bound on the right-hand side of \eqref{eq:eigcount-bound} is independent of $\alpha > 0$, and indeed, for the proof, we will actually show that \eqref{eq:eigcount-bound} is an upper bound on the eigenvalue counting function of the \emph{Neumann} Laplacian. Since one is interested in maximising, not minimising, the eigenvalues of the Neumann Laplacian, previous works have correspondingly only given \emph{lower} bounds on the Neumann counting function (see, e.g., \cite{vdbbugi}). Although our bound is actually quite rough even in the Neumann case, not to mention loss in going from the Robin to the Neumann condition), it will still be sufficient to give the correct power relationship between $\alpha$ and $k$ in Theorem~\ref{thm:k-squares}.

\begin{proof}[Proof of Lemma~\ref{lem:eigcount}]
Monotonicity of the eigenvalues with respect to $\alpha \geq 0$ means that $\alpha \mapsto \eigcount{\rect{a}{A}}{\alpha}{\lambda}$ is a \emph{decreasing} function (for fixed $A$, $a$ and $\lambda$); hence, as just noted, it suffices to prove \eqref{eq:eigcount-bound} when $\alpha = 0$.

Now the eigenvalues of the Neumann Laplacian are solutions $\lambda$ of
\begin{equation}
\label{eq:lattice-count}
	\lambda = \frac{\pi^2}{Aa^2} x^2 + \frac{\pi^2}{A}a^2 y^2,
\end{equation}
where $x,y$ are nonnegative integers. In particular, $\eigcount{\rect{a}{A}}{0}{\lambda}$ gives, for fixed $\lambda$, the number of integer-valued lattice points in the first quadrant of $\R^2$ (including the $x$- and $y$-axes) lying below the curve described by \eqref{eq:lattice-count}. This number is no larger than the number of lattice points within the rectangle having the same intercepts as the curve in \eqref{eq:lattice-count}, namely $(\lambda A)^{1/2}a/\pi$ and $(\lambda A)^{1/2}/(\pi a)$, respectively. But the number of lattice points within this rectangle is certainly not more than
\begin{displaymath}
	\left(\frac{(\lambda A)^{1/2}a}{\pi}+1\right)\left(\frac{(\lambda A)^{1/2}}{\pi a}+1\right),
\end{displaymath}
which is exactly the bound in \eqref{eq:eigcount-bound}.
\end{proof}

\subsection{Completion of the proof of Theorem~\ref{thm:rectangles}}

Here we combine the previous results to prove Theorem~\ref{thm:rectangles}. Indeed, by Lemma~\ref{lem:k-1-mode-bounds}, the two-sided bounds in Theorem~\ref{thm:rectangles} are true whenever the $k^{\rm th}$ eigenvalue of an optimising rectangle for $\keigoptrect{k}{A}{\alpha}$ is given by its $(k,1)$ mode, provided that also $\alpha \leq \pi^2 A^{-1/2}k^2$ as required by Lemma~\ref{lem:k-1-mode-bounds}; the statement about the asymptotic behaviour of $\keigoptrect{k}{A}{\alpha}$ follows directly once we have our two-sided bounds.

Now we know by Lemma~\ref{lem:k-1-mode-est} that the $k^{\rm th}$ eigenvalue is always given by the $(k,1)$ mode whenever $a\geq k^{1/2}$; we thus have to consider $a\leq k^{1/2}$. To obtain the optimal power relationship between $\alpha$ and $k$, that is, that the theorem is true for a region of the form $\{ \alpha \leq Ck^{1/2}\}$, we need to divide this into two subcases: (1) $a \geq C(A,\alpha)k^{1/3}$, and (2) $a\leq C(A,\alpha)k^{1/3}$, where
\begin{displaymath}
	C(A,\alpha):=3^{1/2}\pi^{-2/3}A^{1/6}\alpha^{1/3}.
\end{displaymath}

In case (1), we simply show that the $(1,2)$ mode is always larger than the upper bound on $\eigmodeoptrect{k}{1}{A}{\alpha}$ from Lemma~\ref{lem:k-1-mode-bounds}. Indeed, we have
\begin{displaymath}
	\eigmode{1}{2}{\rect{a}{A}}{\alpha} \geq \keig{2}{\interv{A^{1/2}a^{-1}}}{\alpha} > \keig{2}{\interv{A^{1/2}a^{-1}}}{0} = \frac{\pi^2 a^2}{A}.
\end{displaymath}
Then
\begin{displaymath}
	\frac{\pi^2 a^2}{A} \geq 3\pi^{2/3}A^{-2/3}\alpha^{2/3}k^{2/3}
\end{displaymath}
provided $a\geq C(A,\alpha)k^{1/3}$. (Note that for this argument to work we do not require $a\leq k^{1/2}$; that is, it holds even if $C(A,\alpha)k^{1/3} \geq k^{1/2}$.)

In case (2), it suffices to show using the counting function \eqref{eq:eigcount-def} that any rectangle for which $a\leq C(A,\alpha)k^{1/3}$ has a higher $k^{\rm th}$ eigenvalue than the upper estimate on $\keigoptrect{k}{A}{\alpha}$ from Lemma~\ref{lem:k-1-mode-bounds}. More precisely, we wish to show that, for any $a\leq C(A,\alpha) k^{1/3}$,
\begin{displaymath}
	\eigcount{\rect{a}{A}}{\alpha}{3\pi^{2/3}A^{-2/3}\alpha^{2/3}k^{2/3}}=\eigcount{\rect{a}{A}}{\alpha}{\pi^2 A^{-1}C(A,\alpha)^2 k^{2/3}}\leq k.
\end{displaymath}
By Lemma~\ref{lem:eigcount} (more precisely, \eqref{eq:eigcount-bound}) and the fact that the function $a+a^{-1} \leq 2a$ reaches its maximum for $a \in [1,C(A,\alpha)k^{1/3}]$ at $a=C(A,\alpha)k^{1/3}$ (assuming without loss of generality that $C(A,\alpha)k^{1/3}\geq 1$, since otherwise case (1) always holds, it suffices to have
\begin{displaymath}
	3C(A,\alpha)^2 k^{2/3} + 1 \leq k.
\end{displaymath}
Using the crude bound $k-1\geq k/2$ for $k\geq 2$, this is satisfied provided
\begin{equation}
\label{eq:rectangles-rectangles-comparison}
	\alpha^2 \leq \frac{\pi^4}{18^3 A}k.
\end{equation}
Concluding, for Theorem~\ref{thm:rectangles} to hold, in addition to \eqref{eq:rectangles-rectangles-comparison} it suffices that $\alpha \leq \pi^2 A^{-1/2}k^2$; but since $k\geq 1$, this latter condition is always implied by \eqref{eq:rectangles-rectangles-comparison}. Hence we see that \eqref{eq:rectangles-rectangles-comparison} is by itself sufficient for Theorem~\ref{thm:rectangles}.

\section{Minimality of $k$ equal squares: Proof of Theorems~\ref{thm:k-squares} and~\ref{thm:brexit}}

\label{sec:uk-in-eu}

Recall that $\unionsquare{k}$ denotes the disjoint union of $k$ equal squares, of total area $A>0$. To prove Theorem~\ref{thm:k-squares}, it suffices to prove the optimality of $\unionsquare{k}$ in the claimed region. The two-sided estimate~\eqref{eq:opt-value-est} on $\keigopt{k}{A}{\alpha}$ is then simply the two-sided estimate on $\keig{k}{\unionsquare{k}}{\alpha}$ which comes from combining the lower estimate from Proposition~\ref{prop:boundkequalsquares} and the upper estimate from \eqref{eq:main-interval-bound}, cf.~also \eqref{eq:recall-square-bound}. Our proof of the optimality of $\unionsquare{k}$ uses the following strategy, firstly dealing with rectangles:
\begin{enumerate}
\item we show that the $(k,1)$ mode of any rectangle always has a larger $k^{\rm th}$ eigenvalue than $\unionsquare{k}$, meaning that whenever $a\geq k^{1/2}$
we always have $\keig{k}{\rect{a}{A}}{\alpha} > \keig{k}{\unionsquare{k}}{\alpha}$ (cf.~Lemma~\ref{lem:k-1-mode-est});
\item for $a \leq k^{1/2}$ we use the eigenvalue counting function $\eigcount{\rect{a}{A}}{\alpha}{\,\cdot\,}$ from Section~\ref{sec:eigcount} together with a simple estimate on $\keig{k}{\unionsquare{k}}{\alpha} = \keig{1}{\sq{(A/k)^{1/2}}}{\alpha}$ to show that for sufficiently large $k$ (depending on $\alpha$ in the fashion claimed in the statement of the theorem), we have $\keig{k}{\rect{a}{A}}{\alpha} > \keig{1}{\sq{(A/k)^{1/2}}}{\alpha}$;
\item to consider unions of rectangles, we proceed by induction on $k$.
\end{enumerate} 

We will give each step of the proof in a separate subsection; the proof of Theorem~\ref{thm:brexit} is then given in a further subsection at the end.

\subsection{Proof of Theorem~\ref{thm:k-squares} for long, thin rectangles via the $(k,1)$ mode}
\label{sec:uk-in-eu-thin}

We start with the following important observation.

\begin{lemma}
\label{lem:k-1-mode-vs-unionsquare}
For any $\alpha>0$, $A>0$, $k\geq 2$ and $a\geq 1$, we have
\begin{displaymath}
	\eigmode{k}{1}{\rect{a}{A}}{\alpha} \geq \keig{1}{\sq{(A/k)^{1/2}}}{\alpha} = \keig{k}{\unionsquare}{\alpha}.
\end{displaymath}
\end{lemma}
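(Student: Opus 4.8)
The idea is to read off the inequality from the nodal structure of the $(k,1)$-mode eigenfunction, reducing everything to the isoperimetric inequality of Theorem~\ref{thm:lambda1} applied to the \emph{smallest} nodal domain. The only genuinely "new" input beyond what is already in the excerpt is a pigeonhole observation on the areas of the nodal domains.

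\textbf{Step 1 (nodal decomposition).} First I would invoke Remark~\ref{rem:i-j-mode}: the eigenfunction associated with the $(k,1)$ mode of $\rect{a}{A}$ has exactly $k$ nodal domains $\omega_1,\ldots,\omega_k$, each of which is a rectangle (arranged in a $k\times 1$ grid), and $\eigmode{k}{1}{\rect{a}{A}}{\alpha}$ is precisely the first eigenvalue of the Laplacian on each $\omega_i$ with the Dirichlet condition on the edges interior to $\rect{a}{A}$ and the Robin condition (with parameter $\alpha$) on the edges $\omega_i$ shares with $\partial\rect{a}{A}$.

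\textbf{Step 2 (drop to pure Robin, then apply Theorem~\ref{thm:lambda1}).} Since replacing a Dirichlet condition on part of the boundary by a Robin condition only decreases the first eigenvalue --- the form domain of the pure Robin problem on $\omega_i$ contains that of the mixed problem, and the two quadratic forms agree on the smaller domain, cf.\ the monotonicity already used in Remark~\ref{rem:i-j-mode} --- we get $\eigmode{k}{1}{\rect{a}{A}}{\alpha} \geq \keig{1}{\omega_i}{\alpha}$ for every $i$. As each $\omega_i$ is a rectangle, Theorem~\ref{thm:lambda1} gives $\keig{1}{\omega_i}{\alpha} \geq \keig{1}{\sq{\sqrt{|\omega_i|}}}{\alpha}$.

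\textbf{Step 3 (pigeonhole and scaling).} Because $\sum_{i=1}^{k}|\omega_i| = |\rect{a}{A}| = A$, there is an index $i_0$ with $|\omega_{i_0}| \leq A/k$; by Lemma~\ref{lem:homothetic} (monotonicity of the first eigenvalue under homothetic scaling) this yields $\keig{1}{\sq{\sqrt{|\omega_{i_0}|}}}{\alpha} \geq \keig{1}{\sq{(A/k)^{1/2}}}{\alpha}$. Chaining the estimates gives $\eigmode{k}{1}{\rect{a}{A}}{\alpha} \geq \keig{1}{\sq{(A/k)^{1/2}}}{\alpha}$, and since $\unionsquare{k}$ is a disjoint union of $k$ congruent copies of $\sq{(A/k)^{1/2}}$ --- whose spectrum is therefore that of $\sq{(A/k)^{1/2}}$ with every eigenvalue repeated $k$ times --- we have $\keig{k}{\unionsquare{k}}{\alpha} = \keig{1}{\sq{(A/k)^{1/2}}}{\alpha}$, which closes the argument.

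\textbf{Main obstacle.} There is no hard computation; the one place to be careful is Step~1, namely the precise identification of $\eigmode{k}{1}{\rect{a}{A}}{\alpha}$ with the first eigenvalue of a \emph{mixed} Dirichlet--Robin problem on a nodal rectangle, and the bookkeeping of which edges carry which condition --- but this is exactly what Remark~\ref{rem:i-j-mode} provides, so it reduces to citing it correctly. (If one wanted strictness, note that for $k\geq 3$ the two end nodal domains have area \emph{strictly} less than $A/k$, and for any $k\geq 2$ at least one genuine Dirichlet edge is present, so Lemma~\ref{lem:homothetic} and the strict monotonicity of the mixed-vs.-Robin comparison would upgrade the bound to a strict inequality; the non-strict statement in the lemma needs nothing more than the above.)
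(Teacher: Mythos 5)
Your proof is correct and follows essentially the same route as the paper's: reduce the $(k,1)$ mode to the first eigenvalue of a mixed Dirichlet--Robin problem on a nodal rectangle, relax the Dirichlet edges to Robin, and invoke Theorem~\ref{thm:lambda1} together with the scaling monotonicity of Lemma~\ref{lem:homothetic}. The only (immaterial) difference is that the paper works with the ``end'' nodal domain and inflates it to area exactly $A/k$ before applying Theorem~\ref{thm:lambda1}, whereas you apply Theorem~\ref{thm:lambda1} first and then compare squares after a pigeonhole choice of a nodal domain of area at most $A/k$.
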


\begin{proof}
The argument is essentially the same as in Corollary~\ref{cor:lambda2}. The $(k,1)$ mode eigenvalue is equal to the first eigenvalue of either of the ``end'' nodal domains, i.e., which touch either of the shorter sides of $\rect{a}{A}$ (see Remark~\ref{rem:i-j-mode}). This, in turn, is equal to the sum of the first Robin eigenvalue of an interval of length $A^{1/2}/a$ and the first eigenvalue of a mixed Dirichlet-Robin problem on an interval of some length $A^{1/2}\hat{a}$, where $\hat{a} \leq a/k$. Replacing the Dirichlet condition by a Robin one and using Lemma~\ref{lem:homothetic} applied to the same interval, this means $\eigmode{k}{1}{\rect{a}{A}}{\alpha}$ is larger than the first Robin eigenvalue of a rectangle of side length $A^{1/2}/a$ and $A^{1/2}a/k$. Theorem~\ref{thm:lambda1} applied to this rectangle completes the proof.
\end{proof}

This immediately has the following consequence, which we summarise as a lemma for future reference.

\begin{lemma}
\label{lem:uk-in-eu-thin}
Suppose $a \geq k^{1/2}$. Then, for any $\alpha>0$, $A>0$ and $k\geq 2$, we have
\begin{displaymath}
	\keig{k}{\rect{a}{A}}{\alpha} \geq \keig{k}{\unionsquare}{\alpha}.
\end{displaymath}
\end{lemma}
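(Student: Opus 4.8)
This is an immediate corollary of the two preceding lemmas, so the plan is short. First I would apply Lemma~\ref{lem:k-1-mode-est}: since by hypothesis $a \geq k^{1/2}$, that lemma tells us that the $k$-th Robin eigenvalue of $\rect{a}{A}$ is realised by the $(k,1)$ mode, i.e.\ $\keig{k}{\rect{a}{A}}{\alpha} = \eigmode{k}{1}{\rect{a}{A}}{\alpha}$. (This is precisely the reason the hypothesis is stated in the form $a \geq k^{1/2}$: it is exactly the threshold appearing in Lemma~\ref{lem:k-1-mode-est}.)

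Then I would invoke Lemma~\ref{lem:k-1-mode-vs-unionsquare}, which compares the $(k,1)$ mode directly with $k$ equal squares, giving $\eigmode{k}{1}{\rect{a}{A}}{\alpha} \geq \keig{1}{\sq{(A/k)^{1/2}}}{\alpha}$. Since $\unionsquare{k}$ is a disjoint union of $k$ identical squares, each of area $A/k$, and the spectrum of a disjoint union is the union of the spectra of the components, we have $\keig{k}{\unionsquare{k}}{\alpha} = \keig{1}{\sq{(A/k)^{1/2}}}{\alpha}$. Chaining the two displays then yields $\keig{k}{\rect{a}{A}}{\alpha} \geq \keig{k}{\unionsquare{k}}{\alpha}$, which is the assertion.

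There is no real obstacle here: all the substance is already contained in Lemmas~\ref{lem:k-1-mode-est} and~\ref{lem:k-1-mode-vs-unionsquare}, and the present statement merely records their combination in the form that will be convenient for the induction on $k$ used later to handle general unions of rectangles. The only point worth a moment's care is the bookkeeping of areas: one should check that in the proof of Lemma~\ref{lem:k-1-mode-vs-unionsquare} the relevant rectangle, with side lengths $A^{1/2}/a$ and at most $A^{1/2}a/k$, has area at most $A/k$, so that Theorem~\ref{thm:lambda1} and Lemma~\ref{lem:homothetic} deliver the comparison with the square $\sq{(A/k)^{1/2}}$ of exactly that area — but this is routine and already subsumed in that lemma.
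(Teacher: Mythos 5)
Your proof is correct and is exactly the paper's argument: the paper's proof of this lemma consists precisely of combining Lemma~\ref{lem:k-1-mode-est} (to identify $\keig{k}{\rect{a}{A}}{\alpha}$ with the $(k,1)$ mode when $a\geq k^{1/2}$) with Lemma~\ref{lem:k-1-mode-vs-unionsquare} (to compare that mode with $\keig{1}{\sq{(A/k)^{1/2}}}{\alpha}=\keig{k}{\unionsquare{k}}{\alpha}$). Nothing further is needed.
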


\begin{proof}
Combine Lemma~\ref{lem:k-1-mode-vs-unionsquare} and Lemma~\ref{lem:k-1-mode-est}.
\end{proof}

\subsection{Proof of Theorem~\ref{thm:k-squares} for relatively fat rectangles via the eigenvalue counting function}
\label{sec:uk-in-eu-fat}

Here, we wish to consider $\rect{a}{A}$ for $a \leq k^{1/2}$. We start by recalling the following upper bound on $\keig{k}{\unionsquare{k}}{\alpha} = \keig{1}{\sq{(A/k)^{1/2}}}{\alpha}$ from \eqref{eq:main-interval-bound}:
\begin{equation}
\label{eq:recall-square-bound}
	\keig{1}{\sq{(A/k)^{1/2}}}{\alpha} = 2\keig{1}{\interv{(A/k)^{1/2}}}{\alpha} < \frac{4k^{1/2}\alpha}{A^{1/2}}.
\end{equation}

\begin{lemma}
\label{lem:square-fat}
Suppose, given $A>0$ and $\alpha>0$, that $k \geq 3$ is such that
\begin{equation}
\label{eq:ugly-bound}
	\frac{4A^{1/2}k^{1/2}\alpha}{\pi^2}+\frac{2A^{1/4}k^{1/4}\alpha^{1/2}}{\pi}
	\left(k^{1/2} + k^{-{1/2}}\right) + 1 \leq k.
\end{equation}
Then, for any $a \leq k^{1/2}$, we have have $\keig{k}{\rect{a}{A}}{\alpha} \geq \keig{k}{\unionsquare{k}}{\alpha}$.
\end{lemma}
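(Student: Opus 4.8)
The plan is to argue by contradiction using the counting-function bound of Lemma~\ref{lem:eigcount}, in the manner sketched in the outline preceding Section~\ref{sec:uk-in-eu-thin}. Write $\lambda := \keig{k}{\unionsquare{k}}{\alpha} = \keig{1}{\sq{(A/k)^{1/2}}}{\alpha}$ and suppose, for contradiction, that some rectangle $\rect{a}{A}$ with $1\le a\le k^{1/2}$ satisfies $\keig{k}{\rect{a}{A}}{\alpha} < \lambda$. Then $\keig{1}{\rect{a}{A}}{\alpha}\le\ldots\le\keig{k}{\rect{a}{A}}{\alpha}<\lambda$, so at least $k$ eigenvalues of $\rect{a}{A}$ lie at or below $\lambda$, i.e. $\eigcount{\rect{a}{A}}{\alpha}{\lambda}\ge k$. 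We will contradict this by showing $\eigcount{\rect{a}{A}}{\alpha}{\lambda} < k$.

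\textbf{Key steps.} First I would bound $\lambda$ from above using \eqref{eq:recall-square-bound}: this gives the \emph{strict} inequality $\lambda < 4k^{1/2}\alpha A^{-1/2}$, hence $\lambda A < 4k^{1/2}\alpha A^{1/2}$ and $(\lambda A)^{1/2} < 2k^{1/4}\alpha^{1/2}A^{1/4}$. Second, I would note that $a\mapsto a+a^{-1}$ is increasing on $[1,\infty)$, so the constraint $1\le a\le k^{1/2}$ gives $a+a^{-1}\le k^{1/2}+k^{-1/2}$. Feeding both of these into the estimate \eqref{eq:eigcount-bound} of Lemma~\ref{lem:eigcount} (whose right-hand side is strictly increasing in $\lambda$, and whose hypothesis $a\ge 1$ is met) yields
\[
	\eigcount{\rect{a}{A}}{\alpha}{\lambda} < \frac{4A^{1/2}k^{1/2}\alpha}{\pi^2} + \frac{2A^{1/4}k^{1/4}\alpha^{1/2}}{\pi}\left(k^{1/2}+k^{-1/2}\right) + 1 .
\]
By the hypothesis \eqref{eq:ugly-bound} the right-hand side is $\le k$, so $\eigcount{\rect{a}{A}}{\alpha}{\lambda} < k$, the desired contradiction. (In fact this shows the stronger strict inequality $\keig{k}{\rect{a}{A}}{\alpha} > \keig{k}{\unionsquare{k}}{\alpha}$.)

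\textbf{Main obstacle.} Honestly, there is no serious obstacle: the entire analytic content is already packaged in Lemma~\ref{lem:eigcount}, and \eqref{eq:ugly-bound} is precisely the inequality obtained by substituting the bound $\lambda < 4k^{1/2}\alpha A^{-1/2}$ and $a+a^{-1}\le k^{1/2}+k^{-1/2}$ into \eqref{eq:eigcount-bound}. The only points needing a little care are bookkeeping ones: (i) one must use the \emph{strictness} of \eqref{eq:recall-square-bound}, since that is what upgrades the ``$\le k$'' in \eqref{eq:ugly-bound} to the strict ``$<k$'' required to close the contradiction; (ii) one must confirm the monotonicity of $a+a^{-1}$ on $[1,k^{1/2}]$ so that $a=k^{1/2}$ is genuinely the worst case (which also forces the case split at $a=k^{1/2}$ between this lemma and Lemma~\ref{lem:uk-in-eu-thin}); and (iii) one should record that $\rect{a}{A}$ may always be taken with $a\ge 1$, so that Lemma~\ref{lem:eigcount} applies and the constraint $a\le k^{1/2}$ really pins $a$ to $[1,k^{1/2}]$, which for $k\ge 3$ is a nonempty range.
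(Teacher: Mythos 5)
Your proof is correct and follows essentially the same route as the paper's: both rest on the counting-function estimate \eqref{eq:eigcount-bound}, the upper bound \eqref{eq:recall-square-bound} on $\keig{k}{\unionsquare{k}}{\alpha}$, and the monotonicity of $a+a^{-1}$ on $[1,k^{1/2}]$. If anything your bookkeeping is slightly more careful than the paper's, since by evaluating the counting function at the true eigenvalue $\lambda < 4k^{1/2}\alpha A^{-1/2}$ rather than at the threshold $4k^{1/2}\alpha A^{-1/2}$ itself, you avoid the off-by-one subtlety in passing from $\eigcount{\rect{a}{A}}{\alpha}{\,\cdot\,}\leq k$ to the comparison of the $k^{\rm th}$ eigenvalues, and you even obtain the strict inequality.
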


\begin{proof}
The proof uses the eigenvalue counting function $\eigcount{\Omega}{\alpha}{\lambda}$ defined in Section~\ref{sec:eigcount}. By \eqref{eq:recall-square-bound}, we know that $\eigcount{\unionsquare{k}}{\alpha}{4k^{1/2}\alpha/A^{1/2}} \geq k$; hence, to prove the lemma, it suffices to show that, assuming \eqref{eq:ugly-bound} and $a \leq k^{1/2}$,
\begin{displaymath}
	\eigcount{\rect{a}{A}}{\alpha}{4k^{1/2}\alpha/A^{1/2}} \leq k.
\end{displaymath}
To show this, we first observe that the bound \eqref{eq:eigcount-bound} from Lemma~\ref{lem:eigcount} is a monotonically increasing function of $a \geq 1$, so suffices to consider the extremal case $a=k^{1/2}$ in \eqref{eq:eigcount-bound}, that is,
\begin{displaymath}
	\eigcount{\rect{a}{A}}{\alpha}{4k^{1/2}\alpha/A^{1/2}} 
	\leq\frac{4A^{1/2}k^{1/2}\alpha}{\pi^2}+\frac{2A^{1/4}k^{1/4}\alpha^{1/2}}{\pi}
	\left(k^{1/2} + k^{-{1/2}}\right) + 1.
\end{displaymath}
Thus \eqref{eq:ugly-bound} guarantees that $\eigcount{\rect{a}{A}}{\alpha}{4k^{1/2}\alpha/A^{1/2}} \leq k$.
\end{proof}

Before proceeding, let us give a somewhat weaker but considerably simpler alternative to \eqref{eq:ugly-bound}, which still gives the correct power dependence between $\alpha$ and $k$. 
If we posit a relationship of the form $\alpha = Ck^{1/2}$ in \eqref{eq:ugly-bound}, we obtain
\begin{equation}
\label{eq:not-so-ugly-bound}
	\frac{4A^{1/2}k}{\pi^2}C + \frac{(2k+2)A^{1/4}}{\pi}C^{1/2} + 1 \leq k.
\end{equation}
The corresponding equality is a quadratic equation in $A^{1/4}C^{1/2}/\pi$ with a unique positive solution
\begin{displaymath}
	\frac{A^{1/4}C^{1/2}}{\pi} = \frac{-k-1+\sqrt{5k^2-2k+1}}{4k}
\end{displaymath}
below which \eqref{eq:not-so-ugly-bound} and thus \eqref{eq:ugly-bound} hold. An elementary but tedious calculation shows that this solution is monotonically increasing in $k\geq 3$; thus, to obtain a universally valid bound, it suffices to take $k=3$ (the case $k=2$ being covered by Corollary~\ref{cor:lambda2}). In this case \eqref{eq:not-so-ugly-bound} reduces to
\begin{displaymath}
	\frac{12}{\pi^2}A^{1/2}C + \frac{8}{\pi}A^{1/4}C^{1/2} - 2 \leq 0.
\end{displaymath}
The largest possible value of $C$ for which this holds is
\begin{displaymath}
	C = \left(\frac{\sqrt{10}-2}{6}\right)^2 \pi^2 A^{-1/2} = \frac{\pi^2}{18}(7-2\sqrt{10})A^{-1/2}.
\end{displaymath}
Thus, using the fact that if \eqref{eq:ugly-bound} holds for some $\alpha_0>0$, then it holds for all $\alpha \in (0,\alpha_0)$, we see that to satisfy \eqref{eq:ugly-bound} and thus obtain Theorem~\ref{thm:k-squares} for any rectangle $\rect{a}{A}$ with $a \leq k^{1/2}$ it is certainly sufficient that
\begin{equation}
\label{eq:less-ugly-bound}
	\alpha \leq Ck^\frac{1}{2} = \frac{\pi^2}{18}(7-2\sqrt{10}) A^{-1/2}k^{1/2} \approx 0.370 A^{-1/2}k^{1/2}.
\end{equation}

We finish this subsection by summarising how the above steps complete the proof of Theorem~\ref{thm:k-squares} for rectangles.

\begin{proof}[Proof of Theorem~\ref{thm:k-squares} for rectangles]
Fix $\alpha>0$. Choose $k^\ast$ to be the smallest $k\geq 3$ such that \eqref{eq:ugly-bound} (or \eqref{eq:less-ugly-bound}) holds for this $k^\ast = k^\ast (A,\alpha)$ (and hence also for all $k\geq k^\ast$). Now fix $a \geq 1$ and $k \geq k^\ast$. If $a \leq k^\frac{1}{2}$, then by Lemma~\ref{lem:square-fat}, we have $\keig{k}{\rect{a}{1}}{\alpha} > \keig{k}{\unionsquare{k}}{\alpha}$. If $a \geq k^\frac{1}{2}$, then we may apply Lemma~\ref{lem:uk-in-eu-thin}.

Finally, note that \eqref{eq:ugly-bound} gives an explicit estimate on $k^\ast$. Indeed, reformulating \eqref{eq:less-ugly-bound} (which still exhibits the asymptotically correct power relationship between $k^\ast$, $A$ and $\alpha$),
\begin{displaymath}
	k^\ast \geq \left(\frac{18}{7-2\sqrt{10}}\right)^2\frac{A}{\pi^4} \alpha^2 \approx 7.291 A \alpha^2
\end{displaymath}
is sufficient.
\end{proof}

\subsection{Disjoint unions of rectangles}
\label{sec:disjoint-unions}

We start by formulating an abstract result on minimisers of disjoint unions of domains. We change perspective slightly: instead of fixing $\alpha$ and showing that a certain type of domain minimises the $k^{\rm th}$ eigenvalue for $k$ large enough, it will be more useful to fix $k$ and consider the corresponding range of $\alpha$ small enough. Here we do not restrict ourselves to rectangles: we will work with a general admissible family in the sense of Definition~\ref{def:admissible-family}.

\begin{lemma}
\label{lem:disjoint-growth}
Fix $A>0$ and suppose $\family$ is an admissible family of planar domains for $A$, such that for all $k\geq 1$ and $\alpha >0$ there is a domain in $\family$ realising
\begin{displaymath}
	\inf \{ \keig{k}{\Omega}{\alpha}: \Omega \in \family\}.
\end{displaymath}
Suppose also that, when $k=1$, the minimiser $\Omega^\ast \in \family$ is independent of $\alpha>0$. Assume in addition that there exists a sequence of numbers
\begin{displaymath}
	0 < \alpha_{2} \leq \alpha_{3} \leq \alpha_{4} \leq \ldots
\end{displaymath}
such that, for any $k \geq 2$,
\begin{displaymath}
	\inf \{ \keig{k}{\Omega}{\alpha} : \Omega \in \family \text{ is connected}\}
\end{displaymath}
is realised by the (non-connected) domain $\Omega_k^\ast$ consisting of $k$ equal copies of $\frac{1}{\sqrt{k}}\Omega^\ast$ for all $\alpha \in (0,\alpha_k]$. Define
\begin{equation}
\label{eq:disjoint-transfer}
	\alpha_k^\ast := \min \left\{ \alpha_k, \sqrt{\frac{k}{k-1}}\alpha_{k-1},\ldots,\sqrt{\frac{k}{2}}\alpha_{2} \right\}
\end{equation}
for $k\geq 2$. Then $\Omega_k^\ast$ also realises
\begin{displaymath}
	\inf \{ \keig{k}{\Omega}{\alpha} : \Omega \in \family\}
\end{displaymath}
for all $\alpha \in (0,\alpha_k^\ast]$. If $\alpha_k \to \infty$, then also $\alpha_k^\ast \to \infty$ as $k\to\infty$.
\end{lemma}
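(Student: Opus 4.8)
The plan is to argue by induction on $k\geq 1$ that $\Omega_k^\ast$ (with $\Omega_1^\ast:=\Omega^\ast$) realises $\inf\{\keig{k}{\Omega}{\alpha}:\Omega\in\family\}$ for all $\alpha\in(0,\alpha_k^\ast]$, where we set $\alpha_1^\ast:=\infty$. The base case $k=1$ is exactly the hypothesis that the first-eigenvalue minimiser is $\alpha$-independent, so there is nothing to prove. For the inductive step, fix $k\geq 2$, assume the claim for all smaller indices, and fix $\alpha\in(0,\alpha_k^\ast]$. Let $\Omega\in\family$ be a minimiser for $\keig{k}{\,\cdot\,}{\alpha}$, which exists by assumption. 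If $\Omega$ is connected, then by definition of $\alpha_k$ and the fact that $\alpha\leq\alpha_k^\ast\leq\alpha_k$, we have $\keig{k}{\Omega}{\alpha}\geq \keig{k}{\Omega_k^\ast}{\alpha}$, and we are done. So assume $\Omega$ is disconnected; we must show $\keig{k}{\Omega}{\alpha}\geq\keig{k}{\Omega_k^\ast}{\alpha}$.

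Now I would apply the Wolf--Keller principle, Lemma~\ref{lem:wolf-keller}: since $\Omega=\Omega_1\cup\Omega_2$ is a disjoint minimiser, there is some $i\in\{1,\dots,k-1\}$ and scaling factors $t_1,t_2$ with $t_1^2+t_2^2=1$ such that $\Omega_1=t_1\Theta_i$ and $\Omega_2=t_2\Theta_{k-i}$, where $\Theta_i$ minimises the $i^{\rm th}$ eigenvalue (over the appropriately rescaled subfamily) at some parameter $\alpha_1$, $\Theta_{k-i}$ minimises the $(k-i)^{\rm th}$ eigenvalue at some $\alpha_2$, and $\keig{k}{\Omega}{\alpha}=\keig{i}{t_1\Theta_i}{\alpha}=\keig{k-i}{t_2\Theta_{k-i}}{\alpha}$. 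Passing through the scaling relation \eqref{eq:homothetic-scaling}, each of the two pieces is (up to homothety) an optimiser for its own eigenvalue index at a boundary parameter which has been scaled up by the factor $t_j^{-1}\geq 1$; concretely, $\Theta_i$ optimises $\keig{i}{\,\cdot\,}{\alpha/t_1}$ within the family $\family$ and similarly for the other piece. Since $t_1\geq (i/k)^{1/2}$ — because $t_1^2+t_2^2=1$ forces $t_1^2\geq i/k$ when the first piece ``carries'' $i$ of the $k$ eigenvalues, after one checks the components are arranged so that the smaller index goes with the smaller scaling factor (or simply take whichever labelling makes this hold) — we get $\alpha/t_1\leq (k/i)^{1/2}\alpha\leq (k/i)^{1/2}\alpha_k^\ast\leq\alpha_i^\ast$, where the last inequality is immediate from the definition \eqref{eq:disjoint-transfer} of $\alpha_k^\ast$ as a minimum that already incorporates all the terms $\sqrt{k/j}\,\alpha_j$ for $j<k$ and hence dominates $\sqrt{i/k}\cdot$ any smaller $\alpha_j^\ast$-type quantity. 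One then invokes the inductive hypothesis at index $i$ to conclude that the optimiser at parameter $\alpha/t_1$ is $\frac{1}{\sqrt{i}}\Omega^\ast$ scaled to have the right area, i.e. is (up to homothety) $\Omega_i^\ast$; likewise for $k-i$. Reassembling, $\Omega$ is (up to homothety of its two blocks) a union of $i$ and $k-i$ equal copies of rescaled $\Omega^\ast$, hence a union of $k$ equal copies of $\frac1{\sqrt k}\Omega^\ast$ once the areas are matched — that is, $\Omega$ coincides with $\Omega_k^\ast$, and the eigenvalue equality from Wolf--Keller gives $\keig{k}{\Omega}{\alpha}=\keig{k}{\Omega_k^\ast}{\alpha}$.

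Finally, for the statement that $\alpha_k^\ast\to\infty$ when $\alpha_k\to\infty$: from \eqref{eq:disjoint-transfer}, $\alpha_k^\ast=\min_{2\leq j\leq k}\sqrt{k/j}\,\alpha_j$. Given $M>0$, choose $N$ with $\alpha_j\geq M$ for all $j\geq N$; then for $j\geq N$ the term $\sqrt{k/j}\,\alpha_j\geq\alpha_j\geq M$ (since $j\leq k$), while for the finitely many $j<N$ we have $\sqrt{k/j}\,\alpha_j\geq\sqrt{k/N}\,\alpha_2$, which exceeds $M$ once $k$ is large. Hence $\alpha_k^\ast\geq M$ for all large $k$, proving $\alpha_k^\ast\to\infty$. \emph{The main obstacle} is the bookkeeping in the Wolf--Keller step: correctly tracking how the boundary parameter transforms under the two homotheties $t_1,t_2$ (which is where the relation $t_j\geq(\text{index}/k)^{1/2}$ and the exact shape of \eqref{eq:disjoint-transfer} conspire to keep the rescaled parameters inside the inductively controlled ranges $(0,\alpha_i^\ast]$ and $(0,\alpha_{k-i}^\ast]$), and ensuring the labelling of components is chosen consistently so that these inequalities genuinely hold; once that is set up, everything else is a direct consequence of Lemmas~\ref{lem:wolf-keller} and~\ref{lem:homothetic} together with the scaling relation \eqref{eq:homothetic-scaling}.
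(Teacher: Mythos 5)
Your overall skeleton (induction on $k$, Wolf--Keller decomposition, closing with the $\alpha_k\to\infty$ argument, which is fine) matches the paper's, but the step you yourself flag as ``the main obstacle'' contains a genuine error: the boundary parameter transforms the \emph{opposite} way under homothety from what you claim. By \eqref{eq:homothetic-scaling}, $\keig{i}{t_1\Theta}{\alpha}=t_1^{-2}\keig{i}{\Theta}{t_1\alpha}$, so the piece $t_1\Theta_i$ is governed by the minimisation problem in $\family$ at the \emph{smaller} parameter $t_1\alpha$, not at $\alpha/t_1$; equivalently (Lemma~\ref{lem:optimal-scaling}), scaling the family \emph{down} makes the threshold of validity \emph{larger}, namely $\alpha_i^\ast/t_1$. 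Consequently the useful case is $t_1\leq\sqrt{i/k}$, not $t_1\geq\sqrt{i/k}$ as you require — and your fallback (``take whichever labelling makes this hold'') cannot rescue the argument, because $t_1^2+t_2^2=1$ forces \emph{at most one} of $t_1^2\geq i/k$, $t_2^2\geq(k-i)/k$ to hold strictly; you cannot arrange both, yet your plan needs both in order to ``reassemble'' $\Omega$ into $\Omega_k^\ast$. Relatedly, the inequality you invoke, $(k/i)^{1/2}\alpha_k^\ast\leq\alpha_i^\ast$, does not follow from \eqref{eq:disjoint-transfer}: that definition gives $\alpha_k^\ast\leq\sqrt{k/i}\,\alpha_i$, which is far weaker than the $\alpha_k^\ast\leq\sqrt{i/k}\,\alpha_i^\ast$ your chain would require.

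The paper's proof avoids all of this by exploiting only \emph{one} of the two pieces — the one whose scaling factor satisfies $t_i^2\leq i/k$, which always exists. For that piece the effective parameter $t_i\alpha\leq\sqrt{i/k}\,\alpha$ lies in the inductively controlled range whenever $\alpha\leq\sqrt{k/i}\,\alpha_i^\ast$, so its optimiser is $t_i\Omega_i^\ast$, consisting of $i$ copies of $\Omega^\ast$ each of area $t_i^2/i\leq 1/k$; then Lemma~\ref{lem:homothetic} gives $\keig{k}{\Omega}{\alpha}=\keig{i}{t_i\Omega_i}{\alpha}\geq\keig{i}{t_i\Omega_i^\ast}{\alpha}\geq\keig{k}{\Omega_k^\ast}{\alpha}$ directly — no identification of $\Omega$ with $\Omega_k^\ast$ is needed, only the eigenvalue inequality. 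Minimising the resulting conditions over all splittings $(i,k-i)$ yields the recursive threshold $\hat\alpha_k=\min\{\alpha_k,\sqrt{k/(k-1)}\,\alpha_{k-1}^\ast,\dots,\sqrt{k/2}\,\alpha_2^\ast\}$ with \emph{starred} entries, and a separate (easy but necessary) induction is required to show this equals the closed form \eqref{eq:disjoint-transfer}; your proposal skips this reconciliation entirely.
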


Thus \eqref{eq:disjoint-transfer} shows how we can go from minimisers among connected domains to minimisers among disjoint unions of domains.

\begin{proof}
1. We first recall the scaling relation for the optimal values from Lemma~\ref{lem:optimal-scaling}(2): suppose that for some $k\geq 2$ the value of $\alpha_k$, resp.~$\alpha_k^\ast$, is given and corresponds to some domain $\widetilde\Omega$. If $B<A$, then the corresponding values among the scaled-down family $\{\frac{1}{\sqrt{B}}\Omega: \Omega \in \family \}$ are $\alpha_k\sqrt{A/B} > \alpha_k$ and $\alpha_k^\ast\sqrt{A/B} > \alpha_k^\ast$, respectively.

2. We now proceed by induction on $k$. For $k=2$, the statement recalls closely part 2 of the proof of Corollary~\ref{cor:lambda2}: if $\Omega \in \family$ is connected, then $\keig{2}{\Omega}{\alpha} \geq \keig{2}{\Omega_k^\ast}{\alpha}$ for all $\alpha\leq \alpha_2$. If $\Omega \in \family$ is not connected, then either it has one connected component whose second eigenvalue is $\keig{2}{\Omega}{\alpha}$. Discarding the rest of $\Omega$ and inflating this component decreases the second eigenvalue, which is in particular larger than $\keig{2}{\Omega_k^\ast}{\alpha}$ for all $\alpha\leq \alpha_2$. If $\keig{2}{\Omega}{\alpha} = \keig{1}{\Omega''}{\alpha}$ for some connected component $\Omega''$ of $\Omega$, where another connected component $\Omega'$ gives $\keig{1}{\Omega}{\alpha}$, then we may replace $\Omega'$ and $\Omega''$ by scaled copies of $\Omega^\ast$. The second eigenvalue of their union is either (depending on the ratios of their areas) always larger than $\keig{2}{\Omega_2^\ast}{\alpha}$ (if the two areas are roughly equal) or, if one is much larger than the other and hence the second eigenvalue of the union equals the second eigenvalue of one copy, then, using Step 1, it is at least always larger than $\keig{2}{\Omega_2^\ast}{\alpha}$ for $\alpha \leq \alpha_2$. Hence we obtain the conclusion for $\alpha_2^\ast =\alpha_2$.

3. We now give the induction step. Suppose the lemma is true for $\alpha_2^\ast,\ldots,\alpha_{k-1}^\ast$ and consider $\alpha_{k}^\ast$. Obviously, for $\alpha \leq \alpha_k$ the minimiser, assumed to exist, cannot be connected. Fix such an $\alpha \leq \alpha_k$. By the Wolf--Keller principle (see Lemma~\ref{lem:wolf-keller}),
if $\Omega_\alpha$ is the minimiser, then
\begin{displaymath}
	\Omega_\alpha = t_i \Omega_i \cup t_{k-i} \Omega_{k-i},
\end{displaymath}
where $\Omega_i$ and $\Omega_{k-i}$ are the minimisers of $\keig{i}{\,\cdot\,}{\alpha_i}$ and $\keig{k-i}{\,\cdot,\,}{\alpha_{k-i}}$ for some $\alpha_i, \alpha_{k-i}$ related to $\alpha$, respectively, and where $t_i^2 + t_{k-i}^2 = 1$. Moreover, the scaling factors $t_i$ and $t_{k-i}$ are chosen such that
\begin{displaymath}
	\keig{k}{\Omega_\alpha}{\alpha} = \keig{i}{t_i\Omega_i}{\alpha} = \keig{k-i}{t_{k-i}\Omega_{k-i}}{\alpha}.
\end{displaymath}
In particular, either $t_i^2 \leq i/k$ or $t_{k-i}^2 \leq (k-i)/k$. In the first case, by the induction hypothesis and Step 1,
\begin{displaymath}
	\keig{i}{t_i\Omega_i}{\alpha} \geq \keig{i}{t_i\Omega_i^\ast}{\alpha}\qquad \text{if } \alpha \leq \frac{\alpha_i^\ast}{t_i} 
	\leq \sqrt{\frac{k}{i}}\alpha_i^\ast,
\end{displaymath}
using that $t_i^2 \leq i/k$. Moreover, in this case, $t_i\Omega_i^\ast$ consists of $i$ equal copies of $\Omega^\ast$ each of area
\begin{displaymath}
	t_i^2 \leq \frac{i}{k}\cdot\frac{1}{i} = \frac{1}{k}.
\end{displaymath}
Thus, for $\alpha \leq \alpha_i^\ast\sqrt{k/i}\alpha_i^\ast$, we have
\begin{displaymath}
	\keig{k}{\Omega_\alpha}{\alpha} \geq \keig{i}{t_i\Omega_i^\ast}{\alpha} \geq \keig{k}{\Omega_k^\ast}{\alpha},
\end{displaymath}
and thus $\Omega_k^\ast$ is a minimiser in this case. Similarly, if $t_{k-i}^2 \leq (k-i)/k$, then we obtain the optimality of $\Omega_k^\ast$ whenever $\alpha \leq \alpha_{k-i}^\ast \sqrt{k/(k-i)}$. Concluding,
\begin{displaymath}
	\keig{k}{\Omega_\alpha}{\alpha} \geq \keig{k}{\Omega_k^\ast}{\alpha} \qquad \text{if } \alpha \leq \min \left\{ \sqrt{\frac{k}{i}}\alpha_i^\ast,
	\sqrt{\frac{k}{k-i}}\alpha_{k-i}^\ast \right\}.
\end{displaymath}
Repeating this argument over all possible pairs $(i,k-i)$, $i=1,\ldots$, we obtain that $\Omega_k^\ast$ is minimal for all
\begin{displaymath}
	\alpha \leq \hat\alpha_{k}:= \min \left\{ \alpha_{k}, \sqrt{\frac{k}{k-1}}\alpha_{k-1}^\ast,\ldots,\sqrt{\frac{k}{2}}\alpha_{2}^\ast \right\}.
\end{displaymath}

4. Finally, another simple induction argument shows that $\hat \alpha_k$ is equal to $\alpha_k^\ast$ given by \eqref{eq:disjoint-transfer}. Indeed, for $\alpha_3^\ast$, since $\alpha_2=\alpha_2^\ast$, we have
\begin{displaymath}
	\hat \alpha_3 = \min \left\{ \alpha_3, \sqrt{\frac{3}{2}}\alpha_2^\ast \right\} = \min \left\{ \alpha_3, \sqrt{\frac{3}{2}}\alpha_2 \right\}
	=\alpha_3^\ast.
\end{displaymath}
Similarly, if $\alpha_{i}^\ast = \hat \alpha_i$ for all $i=1,\ldots,k-1$, then
\begin{displaymath}
\begin{aligned}
	\hat \alpha_k &= \min \left\{ \alpha_k, \sqrt{\frac{k}{k-1}}\min\left\{ \alpha_{k-1},\sqrt{\frac{k-1}{k-2}}\alpha_{k-2},\ldots,\sqrt{\frac{k-1}{2}}
	\alpha_2 \right\},\ldots, \sqrt{\frac{k}{2}}\alpha_2 \right\}\\
	&= \min \left\{ \alpha_k, \sqrt{\frac{k}{k-1}}\alpha_{k-1},\ldots, \sqrt{\frac{k}{2}}\alpha_2 \right\} = \alpha_k^\ast.
\end{aligned}
\end{displaymath}
We conclude that $\keig{k}{\,\cdot\,}{\alpha}$ is minimsed by $\Omega_k^\ast$ whenever $\alpha \leq \hat\alpha_k = \alpha_k^\ast$.

5. The statement that $\alpha_k \to \infty$ implies $\alpha_k^\ast \to \infty$ is elementary and follows, for example, from a simple contradiction argument.
\end{proof}

The formula given by \eqref{eq:disjoint-transfer} becomes particularly simple if the optimal value $\alpha_k$ for connected domains from Lemma~\ref{lem:disjoint-growth} behaves like $\sqrt{k}$ (as is the case for our rectangles and as generally appears to be the case for the Robin problem in two dimensions).

\begin{lemma}
\label{lem:precise-disjoint-growth}
Keep the notation and assumptions from Lemma~\ref{lem:disjoint-growth}. If, in addition, there exists a constant $C=C(\family)>0$ such that
\begin{equation}
\label{eq:disjoint-choice}
	\alpha_k = C \sqrt{k}
\end{equation}
for all $k \geq 2$, then $\alpha_k^\ast = \alpha_k = C\sqrt{k}$ for all $k\geq 2$.
\end{lemma}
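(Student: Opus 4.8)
The plan is to substitute the hypothesis \eqref{eq:disjoint-choice} directly into the defining formula \eqref{eq:disjoint-transfer} for $\alpha_k^\ast$ from Lemma~\ref{lem:disjoint-growth} and observe that every entry of the minimum collapses to a single value. First I would fix $k\geq 2$ and examine a generic term $\sqrt{k/j}\,\alpha_j$ appearing in \eqref{eq:disjoint-transfer}, for $j=2,\ldots,k-1$; inserting $\alpha_j = C\sqrt{j}$ gives
\[
	\sqrt{\frac{k}{j}}\,\alpha_j = \sqrt{\frac{k}{j}}\cdot C\sqrt{j} = C\sqrt{k} = \alpha_k .
\]
Since the remaining entry of the minimum in \eqref{eq:disjoint-transfer} is $\alpha_k$ itself, all arguments of the minimum are equal to $C\sqrt{k}$, whence $\alpha_k^\ast = C\sqrt{k} = \alpha_k$, as claimed.

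There is no real obstacle here: the analytic content is already contained in Lemma~\ref{lem:disjoint-growth}, and the present statement merely records the algebraic simplification that occurs when the connected-domain thresholds $\alpha_k$ obey the natural $\sqrt{k}$ scaling. If one prefers, the same conclusion follows by a one-step induction using the recursive form of \eqref{eq:disjoint-transfer} derived in Step~4 of the proof of Lemma~\ref{lem:disjoint-growth}, namely $\alpha_k^\ast = \min\{\alpha_k,\sqrt{k/(k-1)}\,\alpha_{k-1}^\ast,\ldots,\sqrt{k/2}\,\alpha_2^\ast\}$: the base case is $\alpha_2^\ast = \alpha_2 = C\sqrt{2}$ (cf.~Step~2 there), and if $\alpha_j^\ast = C\sqrt{j}$ for all $2\leq j<k$, then each term $\sqrt{k/j}\,\alpha_j^\ast$ equals $C\sqrt{k}$, so again $\alpha_k^\ast = C\sqrt{k}$.
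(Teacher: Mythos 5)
Your proof is correct and coincides with the paper's own argument: both substitute $\alpha_j = C\sqrt{j}$ into \eqref{eq:disjoint-transfer} and note that every term of the minimum collapses to $C\sqrt{k}$. The alternative inductive remark is fine but unnecessary.
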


\begin{proof}
Inserting \eqref{eq:disjoint-choice} into \eqref{eq:disjoint-transfer}, since
\begin{displaymath}
	\sqrt{\frac{k}{k-j}}\alpha_{k-j} = C\sqrt{\frac{k}{k-j}}\sqrt{k-j}=C\sqrt{k}=\alpha_k
\end{displaymath}
for all $j=1,\dots,k-1$, we immediately obtain $\alpha_k^\ast = \alpha_k$.
\end{proof}

With this preparation, we can now treat the case of disjoint unions of rectangles, that is, complete the proof of Theorem~\ref{thm:k-squares}.

\begin{proof}[Proof of Theorem~\ref{thm:k-squares} for disjoint unions of rectangles]
We already proved at the end of Section~\ref{sec:uk-in-eu-fat} that if $k\geq 2$ and
\begin{displaymath}
	\alpha_k := \underbrace{\frac{\pi^2}{18}(7-2\sqrt{10})}_{=:C} A^{-1/2} k^{1/2}
\end{displaymath}
then for any rectangle $\Omega$ of area $A$, $\keig{k}{\Omega}{\alpha}$ is no smaller than $\keig{k}{\unionsquare{k}}{\alpha}$ whenever $\alpha \in (0,\alpha_k]$. Thus, by Lemma~\ref{lem:precise-disjoint-growth}, the same is true for all disjoint unions of rectangles whenever $\alpha \leq \alpha_k^\ast = \alpha_k = C\sqrt{k}$.
\end{proof}

\subsection{Non-optimality of $\unionsquare{k}$ for large $\alpha$: Transition between unions of squares and the proof of Theorem~\ref{thm:brexit}}
\label{sec:transition}
The domain $\unionsquare{k}$ stops being optimal at the latest at the point where $k$ equal squares of area $A/k$ have the same $k^{\rm th}$ eigenvalue as the domain consisting of $k-3$ equal squares and one larger square with area three times that of the other smaller squares.
The curve where this happens is defined by the following identity
\[
 \keig{1}{\sq{\sqrt{A/k}}}{\alpha} = \keig{2}{\sq{\sqrt{3A/k}}}{\alpha} = \keig{3}{\sq{\sqrt{3A/k}}}{\alpha}.
\]
Writing $x_{1} = \sqrt{\keig{1}{\interv{\sqrt{A/k}}}{\alpha}}$, $x_{2} = \sqrt{\keig{1}{\interv{\sqrt{3A/k}}}{\alpha}}$ and
$x_{3} = \sqrt{\keig{2}{\interv{\sqrt{3A/k}}}{\alpha}}$ this is equivalent to the following system of equations
\begin{equation}\label{transition}
 \left\{
 \begin{array}{lll}
  \alpha & = & x_{1}\tan\left( \fr{\sqrt{A} x_{1}}{2\sqrt{k}}\right)\eqskip
  \alpha & = & x_{2}\tan\left( \fr{\sqrt{3A}x_{2}}{2\sqrt{k}}\right)\eqskip
  \alpha & = & -x_{3}\cot\left( \fr{\sqrt{3A}x_{3}}{2\sqrt{k}}\right)\eqskip
  2x_{1}^2 & = & x_{2}^{2}+x_{3}^2
 \end{array}.
 \right.
\end{equation}
Based on this, we shall now prove a result regarding the existence of such a transition curve.
\begin{theorem}\label{transition3to1}
 There exists a solution of system~\eqref{transition} of the form
 \[
  \alpha = \fr{C}{\sqrt{A}} \sqrt{k},
 \]
 where the constant $C$ satisfies $4/5< C < 5\pi^2/2$.
\end{theorem}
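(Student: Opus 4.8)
The plan is to strip the parameters $A$ and $k$ out of \eqref{transition} by a scaling substitution, reducing the whole problem to locating a zero of a single continuous function of $C$ alone, and then to pin that zero down between $4/5$ and $5\pi^2/2$ by the intermediate value theorem.

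\textbf{Step 1: scaling reduction.} Since we only seek solutions with $\alpha = C\sqrt{k}/\sqrt{A}$, it is natural to change variables to the arguments of the trigonometric functions in \eqref{transition}: set
\[
 p := \frac{\sqrt{A}\,x_1}{2\sqrt{k}},\qquad q := \frac{\sqrt{3A}\,x_2}{2\sqrt{k}},\qquad r := \frac{\sqrt{3A}\,x_3}{2\sqrt{k}},
\]
equivalently $x_1 = \tfrac{2\sqrt{k}}{\sqrt{A}}p$, $x_2 = \tfrac{2\sqrt{k}}{\sqrt{3A}}q$, $x_3 = \tfrac{2\sqrt{k}}{\sqrt{3A}}r$. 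Because $x_1^2 = \keig{1}{\interv{\sqrt{A/k}}}{\alpha}$ and $x_2^2 = \keig{1}{\interv{\sqrt{3A/k}}}{\alpha}$ are first Robin eigenvalues of intervals, while $x_3^2 = \keig{2}{\interv{\sqrt{3A/k}}}{\alpha}$ is a second one, the ranges forced by the eigenvalue interpretation are $p,q\in(0,\pi/2)$ and $r\in(\pi/2,\pi)$ (precisely the ranges in which $\tan p,\tan q>0$ and $-\cot r>0$). Substituting into \eqref{transition}, every factor of $A$ and $k$ cancels and the system collapses to
\[
 C = 2p\tan p,\qquad C = \frac{2q}{\sqrt{3}}\tan q,\qquad C = -\frac{2r}{\sqrt{3}}\cot r,\qquad 6p^2 = q^2 + r^2 .
\]
Thus the theorem is equivalent to solving this ($A$- and $k$-free) system for some $C\in(4/5,5\pi^2/2)$ with $p,q\in(0,\pi/2)$ and $r\in(\pi/2,\pi)$; any such $C$ then works simultaneously for all $A,k>0$.

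\textbf{Step 2: a scalar equation.} An elementary derivative check shows that $p\mapsto 2p\tan p$ and $q\mapsto \tfrac{2}{\sqrt{3}}q\tan q$ are continuous and strictly increasing on $(0,\pi/2)$ with range $(0,\infty)$, and that $r\mapsto -\tfrac{2}{\sqrt{3}}r\cot r$ is continuous and strictly increasing on $(\pi/2,\pi)$ with range $(0,\infty)$ (for the last one, $(-r\cot r)' = -\cot r + r\csc^2 r>0$ there, with boundary values $0$ at $\pi/2$ and $+\infty$ at $\pi$). Hence for each $C>0$ the first three equations have unique solutions $p(C)\in(0,\pi/2)$, $q(C)\in(0,\pi/2)$, $r(C)\in(\pi/2,\pi)$, depending continuously on $C$, and the corresponding $x_i$ automatically fall in the eigenvalue ranges. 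The system is therefore solvable exactly when the continuous function
\[
 G(C) := q(C)^2 + r(C)^2 - 6\,p(C)^2
\]
has a zero, so it suffices to show $G$ changes sign on $(4/5,5\pi^2/2)$, after which the intermediate value theorem and a reversal of the substitution in Step~1 finish the proof.

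\textbf{Step 3: endpoint estimates.} At $C = 4/5$: from $2p\tan p = 4/5$ and $\tan p>p$ we get $2p^2<4/5$, hence $6p^2<12/5<\pi^2/4$; since also $r>\pi/2$, we have $q^2+r^2>r^2>\pi^2/4>6p^2$, so $G(4/5)>0$. At $C = 5\pi^2/2$: since $p<\pi/2$, the relation $2p\tan p = 5\pi^2/2$ forces $\tan p>5\pi/2$, hence $p>\arctan(5\pi/2)>\tfrac{\pi}{2}-\tfrac{2}{5\pi}$, using the elementary bound $\arctan t>\tfrac{\pi}{2}-\tfrac1t$; on the other hand $q<\pi/2$ and $r<\pi$ give $q^2+r^2<\tfrac{5\pi^2}{4}$. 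A short computation reduces the inequality $6\bigl(\tfrac{\pi}{2}-\tfrac{2}{5\pi}\bigr)^2>\tfrac{5\pi^2}{4}$ to the polynomial inequality $25\pi^4-240\pi^2+96>0$, which holds since $\pi^2>9.18$; hence $6p^2>q^2+r^2$ and $G(5\pi^2/2)<0$. This establishes the sign change and hence the existence of the desired $C_0\in(4/5,5\pi^2/2)$.

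The main obstacle — indeed essentially the only step that forces a genuine choice — is the upper-endpoint estimate, where the value $5\pi^2/2$ enters through the bound $\arctan(5\pi/2)>\tfrac{\pi}{2}-\tfrac{2}{5\pi}$; the comparison $6(\tfrac{\pi}{2}-\tfrac{2}{5\pi})^2$ versus $\tfrac{5\pi^2}{4}$ is fairly tight numerically, so the reduction to $25\pi^4-240\pi^2+96>0$ is what makes it clean (alternatively one could use the sharper alternating-series estimate $\arctan t>\tfrac{\pi}{2}-\tfrac1t+\tfrac{1}{3t^3}-\tfrac{1}{5t^5}$ for a larger margin). Everything else — the cancellation of $A,k$ in Step~1, the monotonicity of the three transcendental functions, and the lower-endpoint estimate — is routine.
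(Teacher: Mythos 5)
Your proof is correct, and it follows the same overall strategy as the paper -- a scaling substitution that removes $A$ and $k$, monotonicity of the three transcendental functions, and the intermediate value theorem -- but the two arguments are organised differently in a way worth noting. The paper eliminates $C$ from the reduced system and parametrises everything by $c_1\in(0,\pi/2)$ (your $p$), showing that $F(c_1)=2c_1^2-c_2^2(c_1)-c_3^2(c_1)$ runs from $-\pi^2/12$ to $+\pi^2/12$ over the whole interval; the bounds $4/5<C<5\pi^2/2$ are then obtained \emph{a posteriori}: the constraint $2c_1^2=c_2^2+c_3^2$ together with the ranges of $c_2,c_3$ pins $c_1$ into $\bigl(\pi/(2\sqrt6),\sqrt5\pi/(2\sqrt6)\bigr)$, and the Becker--Stark inequalities \eqref{tanbounds} convert this into the stated bounds on $C=2c_1\tan c_1$. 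You instead parametrise by $C$ itself and test the sign of $G$ directly at the two endpoints $C=4/5$ and $C=5\pi^2/2$, which requires the elementary estimates $\tan p>p$ at the lower end and $\arctan t>\pi/2-1/t$ at the upper end (your reduction of the tight comparison to $25\pi^4-240\pi^2+96>0$ checks out; the larger root of that quadratic in $\pi^2$ is about $9.182$, safely below $\pi^2\approx 9.87$). Your route is slightly more direct for the theorem as stated, since the zero of $G$ is located inside $(4/5,5\pi^2/2)$ by construction; the paper's route has the mild advantage that the localisation of $c_1$ and the tangent bounds it uses are recycled elsewhere in the text. Both are complete proofs.
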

\begin{remark}
\label{rem:transition3to1}
 Numerically, we obtain that the solution of system~\eqref{transition} which yields the lowest positive value of the constant $C$ is
 \[
  (x_{1},x_{2},x_{3})\approx(2.50386,1.57707,3.1704)\sqrt{\fr{k}{A}},
 \]
corresponding to
 \[
  \alpha \approx 7.58442 \sqrt{\fr{k}{A}}.
 \]
\end{remark}
\begin{proof}
 We look for solutions of system~\eqref{transition} of the form $\alpha = C\sqrt{k}$ and $x_{i} = 2c_{i}\sqrt{k/A}$, $(i=1,2,3)$. Replacing this
 in~\eqref{transition} yields the system
 \[
 \left\{
 \begin{array}{lll}
  C & = & \fr{2c_{1}}{\sqrt{A}}\tan\left( c_{1} \right)\eqskip
  C & = & \fr{2c_{2}}{\sqrt{A}}\tan\left( \sqrt{3}c_{2}\right)\eqskip
  C & = & -\fr{2c_{3}}{\sqrt{A}}\cot\left( \sqrt{3}c_{3}\right)\eqskip
  2c_{1}^2 & = & c_{2}^{2}+c_{3}^2
 \end{array}.
 \right.
 \]
We now eliminate $C$ from the equations by equating the left-hand sides of the first equation to those of the second and third equations. This
yields the new system in $c_{1}, c_{2}$ and $c_{3}$
\begin{equation}\label{transition2}
\left\{
 \begin{array}{lll}
  c_{1} \tan\left( c_{1} \right) = c_{2}\tan\left( \sqrt{3}c_{2}\right)\eqskip
  c_{1} \tan\left( c_{1} \right) = -c_{3}\cot\left( \sqrt{3}c_{3}\right)\eqskip
  2c_{1}^{2} = c_{2}^2+ c_{3}^{2} 
 \end{array}.
\right.
\end{equation}
We shall now prove the existence of a solution of the above system with smallest possible $c_{1}$, that is, for $c_{1}$ on the interval $(0,\pi/2)$. Note
that since $C= c_{1}\tan\left(\sqrt{A} c_{1}/2\right)$ is increasing in $c_{1}$, this yields the smallest possible value for $C$ for a given value
of the area. The solutions to other constants $c_{2}$ and $c_{3}$ belong to the intervals $\left(0,\pi/(2\sqrt{3})\right)$ and $\left(\pi/(2\sqrt{3}),\pi/\sqrt{3}\right)$,
respectively.

We first note that since the function $x \mapsto x\tan(a x)$ is increasing in $x$ for $x\in(0,\pi/2)$, the first equation in~\eqref{transition2} defines $c_{2}$
as a continuous increasing function of $c_{1}$ defined on $[0,\pi/2)$ and with values in $\left[0,\pi/(2\sqrt{3})\right)$. This function (which abusing
notation we denote by $c_{2}(c_{1})$), satisfies
\[
 c_{2}(0) = 0 \mbox{ and } \lim_{c_{1}\to(\pi/2)^{-}} c_{2}(c_{1}) = \fr{\pi}{2\sqrt{3}}. 
\]
Similarly, the second
equation in~\eqref{transition2} defines $c_{3}$ as a continuous increasing function of $c_{1}$ on the interval $[0,\pi/2)$ and with values on
$\left[\pi/(2\sqrt{3}),\pi/\sqrt{3}\right)$. This second function satisfies
\[
 c_{3}(0) = \pi \mbox{ and } \lim_{c_{1}\to(\pi/2)^{-}} c_{3}(c_{1}) = \fr{\pi}{\sqrt{3}}. 
\]
Defining now the function $F$ on the interval $[0,\pi)$ by
\[
 F(c_{1}) = 2c_{1}^2-c_{2}^{2}\left(c_{1}\right)-c_{3}^{2}\left(c_{1}\right),
\]
we see that this is continuous and satisfies
\[
 F(0) = - \fr{\pi^2}{12} \mbox{ and } \lim_{c_{1}\to(\pi/2)^{-}} F(c_{1}) = 2\times\fr{\pi^2}{4} - \fr{\pi^2}{12} -\fr{\pi^2}{3} = \fr{\pi^2}{12}.
\]
Hence $F$ must vanish somewhere on the interval $(0,\pi/2)$, implying the existence of at least one solution of system~\eqref{transition2} (and
hence~\eqref{transition}).

From the third equation in~\eqref{transition2} and the fact that $c_{2}$ and $c_{3}$ lie on the intervals $\left[0,\pi/(2\sqrt{3})\right)$ and $\left[\pi/(2\sqrt{3}),\pi/\sqrt{3}\right)$,
respectively, we have
\[
 \fr{\pi}{2\sqrt{6}} < c_{1} < \fr{\sqrt{5}\pi}{2\sqrt{6}}.
\]
Since $C = \fr{2c_{1}}{\sqrt{A}}\tan\left(c_{1}\right)$, and using the (monotone) bounds for the tangent given by~\eqref{tanbounds} and
the above bounds for $c_{1}$ we obtain
\[
 \fr{4}{5} < 2c_{1}\tan(c_{1})< \fr{5\pi^2}{2}
\]
yielding the desired estimates for the constant $C$.
\end{proof}

\section{Sums of eigenvalues: Proof of Corollary~\ref{cor:sumeigopt}}
\label{sec:sums}

Given fixed positive numbers $A$ and $\alpha$, we shall now consider the smallest value attainable by the sum of the first $k$ eigenvalues $\sumeigopt{k}{A}{\alpha}$ defined in \eqref{eq:sumeigopt}. Before we proceed with the proof of Corollary~\ref{cor:sumeigopt}, we give a couple of remarks.

\begin{remark}
(a) An easy argument similar to that of Theorem~\ref{thm:existence} shows that the infimum in \eqref{eq:sumeigopt} is always attained. We leave it as an open problem actually to determine the domains which realise $\sumeigopt{k}{A}{\alpha}$, although Corollary~\ref{cor:sumeigopt}, together with Theorem~\ref{thm:rectangles}, strongly suggest that the number of connected components of the optimiser should grow with $k$, and indeed it seems natural to expect that $\unionsquare{k}$ should be the minimising domain for $\sumeigopt{k}{A}{\alpha}$ for $k$ sufficiently large.

(b) Corollary~\ref{cor:sumeigopt} also suggests that the optimal sum of eigenvalues taken over \emph{all} planar domains of area $A$, not just unions of rectangles, should also grow like $C A^{-1/2}\alpha k^{3/2}$ for some $0<C\leq 2\pi^{1/2}$ (the corresponding value for $k$ balls).
\end{remark}

Let us formulate these claims explicitly as a conjecture.
\begin{conjecture}
Fix positive numbers $A$ and $\alpha$. Then, for $k$ sufficiently large,
\begin{equation}
\label{eq:general-sum}
	\inf \left\{ \sum_{j=1}^k \keig{k}{\Omega}{\alpha}: \Omega \subset \R^2 \text{ Lipschitz, } |\Omega|=A \right\}
\end{equation}
is achieved by the disjoint union of $k$ equal disks, of total area $A$. In particular, \eqref{eq:general-sum} behaves asymptotically like
\begin{displaymath}
	\frac{2\pi^{1/2}\alpha}{A^{1/2}} k^{3/2}
\end{displaymath}
as $k\to\infty$.
\end{conjecture}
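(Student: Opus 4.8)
The plan is to adapt the architecture used in this paper for unions of rectangles (Theorems~\ref{thm:k-squares}--\ref{thm:brexit} and Corollary~\ref{cor:sumeigopt}): one part fixes the asymptotic value by exhibiting $\unionball{k}$ (the disjoint union of $k$ equal disks of total area $A$) as a competitor and proving a matching lower bound, and a second, structural part promotes this to the exact identification of $\unionball{k}$ as a minimiser for all large $k$. For the competitor, observe that each disk of $\unionball{k}$ has area $A/k$, so $\keig{j}{\unionball{k}}{\alpha}$ equals the first Robin eigenvalue of a single disk of area $A/k$ for every $j=1,\dots,k$, whence $\sum_{j=1}^k \keig{j}{\unionball{k}}{\alpha} = k\lambda_\ast$, where $\lambda_\ast$ is that eigenvalue. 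Feeding in the small-radius expansion of the first Robin eigenvalue of a disk --- the planar analogue of the interval expansion~\eqref{eq:eig1expa}, with leading term $2\alpha(\pi/a)^{1/2}+\bo(1)$ as the area $a\to 0$, derivable from the Bessel-function characterisation of the Robin eigenvalues of a disk exactly as in the Appendix --- gives $\inf_\Omega \sum_{j=1}^k \keig{j}{\Omega}{\alpha}\le \frac{2\pi^{1/2}\alpha}{A^{1/2}}k^{3/2}(1+\so(1))$, hence the claimed asymptotics once a matching lower bound is available.

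For the structural part, I would first reduce (as in Theorem~\ref{thm:existence} together with Lemma~\ref{lem:homothetic}) to minimisers with at most $k$ connected components, each ``needed'', and then note a Wolf--Keller-type recursion for sums in the spirit of Lemma~\ref{lem:wolf-keller}: the $k$ smallest eigenvalues of a disjoint union $\Omega_1\cup\Omega_2$ split as the $m_1$ smallest of $\Omega_1$ and the $m_2=k-m_1$ smallest of $\Omega_2$ for the index split minimising the total, so an optimiser decomposes into optimisers of the sum problem for $(m_1,|\Omega_1|)$ and $(m_2,|\Omega_2|)$. An induction on $k$ --- exploiting that the area constraints shrink down the recursion, precisely the $\sqrt{k/(k-j)}$ gain used in Lemma~\ref{lem:disjoint-growth} --- would then reduce the multi-component case to disjoint unions of disks, each ``used once'', and the convexity of the first Robin eigenvalue of a disk as a function of its area would force all these disks to be equal, i.e.\ $\unionball{k}$. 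What remains, and it is the crux, is the base (single-component) case: for $k$ large (depending on $\alpha,A$), no connected domain of area $A$ should beat $k$ equal disks for the sum of its first $k$ eigenvalues.

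This single-component step is where I expect the main obstacle to lie. For the rectangle analogue one simply uses the counting-function bound of Lemma~\ref{lem:eigcount} to beat $\keig{k}{\unionsquare{k}}{\alpha}$, but no comparably elementary device has the required precision here. A connected domain can approach $\unionball{k}$ arbitrarily closely: a ``comb'' formed by $k$ thin teeth joined along a narrow spine has $\sum_{j=1}^k \keig{j}{\,\cdot\,}{\alpha}$ close to $k$ times the first Robin eigenvalue of a disk of area $\approx A/k$, so the lower bound must be sharp enough to detect that both the area lost to the spine and the coupling across it are always strictly costly. Worse, the Berezin--Li--Yau and Kr\"oger techniques deliver only \emph{upper} bounds on Robin (and Neumann) eigenvalue sums, so a lower bound of the correct order $\alpha A^{-1/2}k^{3/2}$, uniform over all Lipschitz domains of area $A$, would need a genuinely new argument --- perhaps a quantitative Dirichlet--Neumann bracketing on a mesh of side $\sim (A/k)^{1/2}$ combined with sharp estimates for the first eigenvalue of a small cell carrying mixed Robin--Neumann conditions. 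In effect this would also subsume the paper's open Conjecture on the optimality of $k$ equal balls for $\keig{k}{\,\cdot\,}{\alpha}$ itself; note that even within unions of rectangles the best available two-sided bounds (Corollary~\ref{cor:sumeigopt}) only pin $\sumeigopt{k}{A}{\alpha}/k^{3/2}$ into the interval $[\tfrac{8}{3}\alpha A^{-1/2},\,4\alpha A^{-1/2}]$, so the corresponding statement is open already for rectangles.

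Finally, once the sharp lower bound and the reduction are in hand, the precise value of $k$ beyond which $\unionball{k}$ becomes optimal, and the transition curve at which it ceases to be --- where three of the small disks are replaced by a single disk of three times the area --- would be computed exactly as in Theorem~\ref{transition3to1} and Remark~\ref{rem:transition3to1}, with the transcendental equations in $\tan$ and $\cot$ there replaced by the corresponding Bessel-function relations for the Robin disk.
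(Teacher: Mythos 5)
The statement you are asked to prove is stated in the paper as a \emph{conjecture}: the authors offer no proof, only the heuristic evidence coming from Corollary~\ref{cor:sumeigopt} (which, even restricted to unions of rectangles, pins $\sumeigopt{k}{A}{\alpha}/k^{3/2}$ only into the interval $[\tfrac{8}{3}\alpha A^{-1/2},\,4\alpha A^{-1/2}]$) and from the analogy with Theorems~\ref{thm:k-squares} and~\ref{thm:brexit}. Your proposal is therefore correctly calibrated: the upper bound via the competitor $\unionball{k}$ is sound (the first Robin eigenvalue of a disk of area $a$ is $\sim \alpha\,|\partial B|/|B| = 2\alpha(\pi/a)^{1/2}$ as $a\to 0$, so $k$ copies of area $A/k$ give exactly $2\pi^{1/2}\alpha A^{-1/2}k^{3/2}(1+\so(1))$, matching the conjectured constant), and you have located the genuine gap precisely where it lies: a lower bound of order $\alpha A^{-1/2}k^{3/2}$, uniform over \emph{all} Lipschitz domains of area $A$, is not available by any technique in this paper or in the Berezin--Li--Yau/Kr\"oger circle of ideas, which produce bounds in the wrong direction for the Robin/Neumann problem. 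Since that single ingredient is missing, the proposal is not a proof, and it cannot be completed by the methods of the paper; as you note, closing it would essentially subsume the paper's open Conjecture on the optimality of $k$ equal balls for $\keig{k}{\,\cdot\,}{\alpha}$ itself.

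Two secondary caveats on the structural part, should anyone attempt to push it through. First, the Wolf--Keller-type recursion for \emph{sums} is more delicate than Lemma~\ref{lem:wolf-keller} for a single eigenvalue: if $\Omega^\ast=\Omega_1\cup\Omega_2$ minimises the sum and the first $k$ eigenvalues split as $m_1$ from $\Omega_1$ and $m_2$ from $\Omega_2$, replacing $\Omega_1$ by a minimiser of the sum of its first $m_1$ eigenvalues can change the interlacing of the two spectra and hence the split itself, so the ``decomposes into optimisers of the sub-sum problems'' step needs an additional argument (e.g.\ a monotone rearrangement of the merged spectrum) rather than being immediate. Second, the Jensen-type step forcing equal disks requires convexity of $a\mapsto\keig{1}{B_a}{\alpha}$ in the area $a$ over the whole relevant range, not just to leading order as $a\to 0$; this is plausible but would need to be verified from the Bessel characterisation. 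Neither of these is the main obstruction, but both would have to be addressed in any eventual proof.
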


\begin{proof}[Proof of Corollary~\ref{cor:sumeigopt}]
For the upper bound, for each $k\geq 1$ we use the disjoint union $\unionsquare{k}$ of $k$ equal squares as a test domain:
\begin{displaymath}
	\sumeigopt{k}{A}{\alpha} \leq \sum_{j=1}^k \keig{j}{\unionsquare{k}}{\alpha} = k\keig{k}{\unionsquare{k}}{\alpha}
	\leq k \cdot \frac{4k^{1/2}\alpha}{A^{1/2}},
\end{displaymath}
the latter inequality following as usual from \eqref{eq:main-interval-bound}.

The lower bound follows from Theorem~\ref{thm:existence} in the form of Corollary~\ref{cor:optimal-asymptotic}. We start by observing that
\begin{displaymath}
	\sumeigopt{k}{A}{\alpha} \geq \sum_{j=1}^k \keigopt{k}{A}{\alpha}.
\end{displaymath}
We will use the asymptotics in Corollary~\ref{cor:optimal-asymptotic} to control the latter sum. Indeed, by this corollary, for fixed $A>0$ and $\alpha>0$, there exists a constant $m_1>0$ such that
\begin{displaymath}
	\keigopt{k}{A}{\alpha} \geq \frac{4\alpha}{A^{1/2}}k^{1/2} - m_1
\end{displaymath}
for all $k\geq 1$ (use the fact that $\keigopt{k}{A}{\alpha} = \frac{4\alpha}{A^{1/2}}k^{1/2} + \bo(1)$ as $k\to\infty$, by \eqref{eq:optimal-asymptotic}). Hence
\begin{displaymath}
	\sumeigopt{k}{A}{\alpha} \geq \frac{4\alpha}{A^{1/2}} \sum_{j=1}^k j^{1/2} - m_1 k.
\end{displaymath}
Now
\begin{displaymath}
	\sum_{j=1}^k j^{1/2} = \frac{2}{3}k^{3/2} + \frac{1}{2}k^{1/2} + \bo(1)
	\geq \frac{2}{3}k^{3/2} + \frac{1}{2}k^{1/2} - m_2
\end{displaymath}
for some constant $m_2>0$ independent of $k\geq 1$. Hence
\begin{displaymath}
	\sumeigopt{k}{A}{\alpha} \geq \frac{4\alpha}{A^{1/2}}\left(\frac{2}{3}k^{3/2} + \frac{1}{2}k^{1/2} - m_2\right)- m_1 k
\end{displaymath}
for all $k\geq 1$. Dividing by $k^{3/2}$ and passing to the limit yields the lower bound.
\end{proof}

\section{The higher-dimensional case}
\label{sec:higher-dimension}

To keep both the notation and the arguments as simple as possible, we have restricted ourselves to the planar case; nevertheless, we expect analogous statements to hold in $d\geq 3$ dimensions, where in place of rectangles one considers \emph{hyperrectangles} (sometimes also called \emph{cuboids} or \emph{rectangular parallelepipeds}) and their disjoint unions. Moreover, in most cases the proofs should be directly adaptable. We give a brief summary.

(1) The existence of a domain minimising $\keig{k}{\Omega}{\alpha}$ among all $d$-dimensional hyperrectangles (and among all disjoint unions of hyperrectangles, respectively) of given total volume follows from the same blow-up and continuity argument as in Theorem~\ref{thm:existence}.

(2) The minimiser of $\keig{1}{\Omega}{\alpha}$ should be the regular hypercube. However, the computation given in Theorem~\ref{thm:lambda1} will not work as easily. Once one has the hypercube for the first eigenvalue, the same proof as the one of Corollary~\ref{cor:lambda2} (noting that the nodal domains of any second eigenvalue on a hyperrectangle are again hyperrectangles) implies that the second eigenvalue is, as usual, minimised by the disjoint union of two equal regular hypercubes.

(3) The statements of Theorems~\ref{thm:k-squares} and~\ref{thm:brexit} should still hold (when dimensionally adjusted). Moreover, the proof schemes should still work, although Steps 1 and 2 of Section~\ref{sec:uk-in-eu} are more complicated due to the greater number of possible ways and directions in which a $d$-dimensional hyperrectangle can become unbounded. If $\keigopt{k}{V}{\alpha}$ now denotes the minimal $k^{\rm th}$ eigenvalue among all unions of hyperrectangles of volume $V$ in $d$ dimensions, then the correct power growth will be $k^{1/d}$ and we should have
\begin{equation}
\label{eq:optimal-asymptotic-d}
	\lim_{k\to\infty} \frac{\keigopt{k}{V}{\alpha}}{k^{1/d}} = \frac{2d\alpha}{V^{1/d}}
\end{equation}
corresponding to the $k^{\rm th}$ eigenvalue of the disjoint union $\unioncube{k}$ of $k$ equal hypercubes, each of volume $V/k$. This in turn equals first eigenvalue of a $d$-dimensional regular hypercube of volume $V/k$ and thus side length $(V/k)^{1/d}$. 

(4) Moreover, $\unioncube{k}$ should be optimal in a region of the form $\alpha \leq C k^{1/d}$, at the point where the first eigenvalue of a cube of side length $k^{-1/d}$ is equal to the $(d+1)$-st eigenvalue of a cube of side length $((d+1)k)^{-1/d}$; the analogous argument for balls was already given in \cite[Lemma~4.1]{anfrke}. Additionally, if $\Omega$ is any \emph{fixed} hyperrectangle (or union thereof), then we claim that $\unioncube{k}$ is also better than $\Omega$ in a region of the form $\alpha \leq C_\Omega k^{1/d}$, which suggests that the region of transition from $k$ equal cubes to a connected optimiser is generally quite ``thin''. To lend weight to this assertion, we make use of the counting function of $\Omega$ (cf.~\eqref{eq:eigcount-def}). At energy $\lambda>0$ we have
\begin{equation}
\label{eq:omega-count}
	\eigcount{\Omega}{\alpha}{\lambda} \sim \lambda^{d/2}.
\end{equation}
Since $\keig{k}{\unioncube{k}}{\alpha} \leq 2d\alpha/k^{1/d}$ (as follows, e.g., from \eqref{eq:eig1expa} or the bounds on $\keig{1}{\interv{a}}{\alpha}$ in Appendix~\ref{sec:interval}, choosing $a=k^{-1/d}$), arguing as in the proof of Lemma~\ref{eq:eigcount-bound}, for $\keig{k}{\unioncube{k}}{\alpha}$ to be smaller we want
\begin{displaymath}
	\eigcount{\Omega}{\alpha}{2d\alpha/k^{1/d}} \leq k.
\end{displaymath}
Using \eqref{eq:omega-count}, this is equivalent to $\alpha \leq C k^{1/d}$. In fact, this argument can easily be made into a rigorous proof; equally, with a lower bound on the counting function a similar argument could be used to show that $\alpha \geq \widetilde{C}(\Omega) k^{1/d}$ implies the fixed domain $\Omega$ is better than $\unioncube{k}$. We will consider this question in more detail below.

(5) We \emph{expect} the optimal hyperrectangle for $\keigoptrect{k}{V}{\alpha}$ to be long in one direction and short in the remaining $d-1$ (in fact, it should be the cross product of a small $d-1$-dimensional regular hypercube with a long interval). An argument similar to the one of Lemma~\ref{lem:k-1-mode-bounds} (cf.~also Remark~\ref{rem:k-1-balance}), with the \emph{Ansatz} $a=ck^\gamma$ (and short sides thus each proportional to $k^{-\gamma/(d-1)}$) leads to the power condition $2-2\gamma=\gamma/(d-1)$, i.e., $\gamma=(2d-2)/(2d-1)$ and thus to the conjecture
\begin{displaymath}
	\keigoptrect{k}{V}{\alpha} \sim k^\frac{2}{2d-1}
\end{displaymath}
as $k\to\infty$, corresponding to a long side of length proportional to $k^{(2d-2)/(2d-1)}$ and $d-1$ short sides of length like $k^{-2/(2d-1)}$. In particular, in any dimension we expect deviation (even among convex domains) from the power coming from the Weyl asymptotics for any given domain, namely $\keig{k}{\Omega}{\alpha} \sim k^{2/d}$.

(6) Based on \eqref{eq:optimal-asymptotic-d}, the smallest possible value $\sumeigopt{k}{V}{\alpha}$ of the sum of the first $k$ eigenvalues of a union of hyperrectangles with total volume $V$ should grow like $k^{1+1/d}$ as $k\to\infty$.

We will now give some more detailed considerations about the regions where we may expect the disjoint union of $k$ equal hypercubes to be the extremal domain, and where this will no
longer be the case. Let $\Omega$ be a given finite disjoint union of hyperrectangles with volume $V$ and let $\unioncube{k}$ denote the disjoint union of $k$ equal hypercubes, also of total volume $V$. We then have
\begin{equation}\label{lambdakD}
 \keig{k}{\Omega}{\alpha} < \keigd{k}{\Omega} = \fr{4\pi^2}{\left(V\omega_{d}\right)^{2/d}} k^{2/d} + r_{1}(k),
\end{equation}
where the remainder term satisfies $r_{1}(k) = \so\left(k^{2/d}\right), \mbox{ as } k\to +\infty$, and is independent of $\alpha$.
On the other hand, we also have
\[
\begin{array}{lll}
  \keig{k}{\unioncube{k}}{\alpha} & = & \keig{1}{\left(V k^{-1}\right)^{1/d}C}{\alpha} \eqskip
  & = & d\keig{1}{\interv{(V/k)^{1/d}}}{\alpha}\eqskip
  & \geq & \fr{2\alpha d \pi^2 k^{2/d}}{V^{1/d}\left(\pi^2k^{1/d}+2\alpha V^{1/d}\right)},
\end{array}
\]
where $C$ is the unit $d$-dimensional hypercube and we used the lower bound given in Proposition~\ref{prop:firsteiginterv} in the last step.
This will be larger than the right-hand side of~\eqref{lambdakD} provided that
\[
 \fr{4\pi^2}{\left(V\omega_{d}\right)^{2/d}} + \fr{r_{1}(k)}{k^{2/d}} <  \fr{2\alpha d \pi^2 }{V^{1/d}\left(\pi^2k^{1/d}+2\alpha V^{1/d}\right)}.
\]
We may now solve this with respect to $\alpha$ and obtain that if
\[
 \alpha > \frac{2 \pi ^2 }{d \omega ^{2/d}-4}\left(\fr{k}{V}\right)^{1/d} + r_{2}(k),
\]
where $r_{2}(k) = \so\left(k^{1/d}\right)$ as $k$ goes to infinity, then the $k$ equal hypercubes are no longer optimal. In the planar case
and for area $A$ the above reads as
\[
 \alpha > \frac{ \pi ^2 }{\pi -2}\left(\fr{k}{A}\right)^{1/2} + r_{2}(k) \approx 8.64547 \left(\fr{k}{A}\right)^{1/2} + \so(k^{1/2}),
\]
which is comparable to the result in Section~\ref{sec:transition} for the transition between $k$ equal squares and $k-3$ equal squares
and one larger square.

In a similar fashion, it is possible to derive the asymptotic behaviour for the boundary of the region where $k$ equal hypercubes yield
a lower value than a fixed disjoint union of hyperrectangles $\Omega$. Starting from
\begin{equation}\label{lambdakN}
 \keig{k}{\Omega}{\alpha} > \keign{k}{\Omega}= \fr{4\pi^2}{\left(V\omega_{d}\right)^{2/d}} k^{2/d} + r_{3}(k),
\end{equation}
where again the remainder term satisfies $r_{3}(k) = \so\left(k^{2/d}\right), \mbox{ as } k\to +\infty$, and is independent of $\alpha$.
Proceeding as above, but now using the upper bound given in Proposition~\ref{prop:firsteiginterv}
we obtain
\[
\begin{array}{lll}
  \keig{k}{V^{1/d}\unioncube{k}}{\alpha} & \leq & \fr{d\pi^2 k^{2/d}}{2(\pi^2-8)V^{2/d}}
  \Biggr[\pi^2+2\alpha (k^{-1}V)^{1/d} \eqskip
  & & \hspace*{5mm} - \sqrt{ 64\alpha  (k^{-1}V)^{1/d} - \left(\pi^2-2\alpha  (k^{-1}V)^{1/d}\right)^{2}}
  \Biggr],
\end{array}
\]
Comparing this with the right-hand side in~\eqref{lambdakN} and proceeding in the same way as before yields, after some lengthy
calculations, that $k$ equal squares are better than $\Omega$ for
\[
 \alpha < \fr{2}{d \omega_{d}^{2/d}}\left( \pi^2 +\fr{32}{d \omega_{d}^{2/d}-4}\right)\left(\fr{k}{V}\right)^{1/d} + r_{4}(k),
\]
where $r_{4}=\so\left(k^{1/d}\right)$ as $k$ goes to infinity. In the planar case we obtain
\[
 \alpha < \left(\pi +\fr{16}{\pi(\pi-2)}\right)\left(\fr{k}{A}\right)^{1/2} + r_{4}(k) \approx 7.60287 \left(\fr{k}{A}\right)^{1/2} + \so(k^{1/2}).
\]

\appendix

\section{The eigenvalues of the Robin Laplacian on intervals and rectangles\label{sec:interval}}

Here we give sharp bounds for the first and second eigenvalues of the Robin Laplacian on an interval of length $a$, as these are used to build the eigenvalues of rectangles and disjoint unions of rectangles. As these are of independent interest, and to the best of our knowledge many are new, we give sharper estimates than we actually need in many cases. Depending on the particular instance, we may however need our bounds to behave in an appropriate fashion in the different limits of interest, namely, as $a$ and $\alpha$ approach either $0$ or infinity; in such cases, we will present complementary bounds and the corresponding asymptotic expansions.

\subsection{The first eigenvalue on an interval}
The first of these eigenvalues, $\keig{1}{\interv{a}}{\alpha}$, belongs to the interval $(0,\pi^2/a^2)$ and is thus given by the smallest positive
root of
\begin{equation}
\label{eq:1eig-interval}
	\alpha = \sqrt{\lambda} \tan \left(\frac{a\sqrt{\lambda}}{2}\right).
\end{equation}
Expanding the tangent around zero allows us to obtain a formal expression for the expansion of this eigenvalue as $a$ approaches zero as follows
\begin{equation}
\label{eq:eig1expa}
 \keig{1}{\interv{a}}{\alpha} = \fr{2\alpha}{a}-\fr{\alpha^{2}}{3}+\fr{2\alpha^{3}}{45} a - \fr{4\alpha^{4}}{945}a^2 + \fr{2\alpha^{5}}{1475}a^3+\bo(a^4).
\end{equation}
Inserting the above expression in equation~\eqref{eq:1eig-interval}, we see that the argument of the tangent does go to zero as $a$ approaches zero, validating the
expansion.
On the other hand, expanding the tangent around $\pi/2$ yields the corresponding expansion
\[
 \keig{1}{\interv{a}}{\alpha} = \frac{\pi ^2}{a^2} -\frac{4 \pi ^2}{a^3 \alpha }+\frac{12 \pi ^2}{a^4 \alpha ^2}
 -\frac{4 \pi ^2 \left(24-\pi^2\right)}{3 a^5 \alpha ^3} + \bo(\alpha^{-4})
\]
for large $\alpha$.

A first simple remark comes from the fact that, on $(0,\pi/2)$, the tangent is bounded from below by its argument. We thus immediately derive
from~\eqref{eq:1eig-interval} that
\begin{equation}
\label{eq:main-interval-bound}
 \keig{1}{\interv{a}}{\alpha} \leq \fr{2\alpha}{a},
\end{equation}
that is, the first Robin eigenvalue on an interval of length $a$ is smaller that the first term in its expansion as $a$ approaches zero.
Since the corresponding expansion~\eqref{eq:eig1expa} seems to alternate with decreasing terms in absolute value, it is in fact expected that the successive
terms will provide upper and lower bounds for this quantity.  Using further inequalities from the tangent expansion at zero it is also possible to
obtain slightly better albeit more complicated bounds, of which the next using $\tan(x)\geq x + x^3/3$ yields
\[
 \keig{1}{\interv{a}}{\alpha} \leq \frac{2 \sqrt{6 a \alpha +9}-6}{a^2} = \fr{2\alpha}{a}-\fr{\alpha^{2}}{3} + \bo(a), \mbox{ as } a\to0,
\]
with the correct asymptotic behaviour up to the second term.

However, for most of our purposes it will be convenient to obtain bounds with a different form which behave at least in a qualitatively correct
way in more than one asympotic limit. To do this, we shall use a different family of inequalities for the tangent, namely~\cite{best},
\begin{equation}\label{tanbounds}
 \fr{8x}{\pi^2-4x^2}\leq \tan x \leq \fr{\pi^2x}{\pi^2 -4x^2}, \; x\in(0,\fr{\pi}{2}).
\end{equation}
Replacing these in equation~\eqref{eq:1eig-interval} we obtain, after some simplifications,
\[
 \fr{4a\lambda}{\pi^2-a^2\lambda}\leq \alpha \leq \fr{\pi^2 a\lambda}{2(\pi^2-a^2\lambda)}.
\]
Using the fact that we are looking for solutions on the interval $(0,\pi/2)$, we arrive at the following two-sided bounds
\begin{equation}\label{eq:eig1ineq1}
 \fr{2\alpha\pi^2}{a(\pi^2+2\alpha a)}\leq \keig{1}{\interv{a}}{\alpha} \leq \fr{\alpha\pi^2}{a(4+\alpha a)}.
\end{equation}
Both bounds have the first correct term in the corresponding asymptotics when $\alpha$ goes to infinity, and the lower bound also displays the correct
behaviour as $a$ approaches zero. However, this is not the case for the upper bound. In order to obtain a bound that does so, we will use
a test function of the form
\[
 u(x) = 1- c \cos\left(\fr{\pi x}{a}\right),
\]
where $c$ is a constant (possibly depending on $a$ and $\alpha$) to be determined later. Replacing this in the Rayleigh quotient
for $\keig{1}{\interv{a}}{\alpha}$ yields
\[
 \begin{array}{lll}
  \keig{1}{\interv{a}}{\alpha} & \leq & \fr{\dint_{-a/2}^{a/2}\left(\fr{\pi c}{a}\right)^2\sin^{2}\left(\fr{\pi x}{a}\right)\;{\rm d}x
  +2\alpha \left[ 1- c \cos\left(\fr{\pi}{2}\right)\right]^2}{\dint_{-a/2}^{a/2}\left[ 1- c \cos\left(\fr{\pi x}{2}\right)\right]^2\;{\rm d}x}\eqskip
  & = & \fr{\pi \left( \pi^2 c^2+4a\alpha \right)}{a^2\left(2\pi - 8 c + c^2\pi\right)}.
 \end{array}
\]
We now pick the constant $c$ minimising the quotient on the right. This is achieved for
\[
 c= \fr{\pi^2 -2a \alpha-\sqrt{64a \alpha + (\pi^2-2a \alpha)^2}}{4\pi},
\]
which, when replaced back into the above bound, yields
\[
 \keig{1}{\interv{a}}{\alpha} \leq \fr{\pi^2}{a^2}\times \fr{\pi^2 + 2a \alpha - \sqrt{64a\alpha
  +(\pi^2-2a\alpha)^2}}{2(\pi^2-8)}.
\]
It is simple to check that the above bound does satisfy the asymptotic behaviour for both large $\alpha$ and small $a$.

We have thus proved the following
\begin{proposition}\label{prop:firsteiginterv}
 The first eigenvalue of the Robin Laplacian on an interval satisfies
 \[
  \fr{2\alpha\pi^2}{a(\pi^2+2\alpha a)}\leq \keig{1}{\interv{a}}{\alpha} \leq \fr{\pi^2}{a^2}\times \fr{\pi^2 + 2a \alpha - \sqrt{64a\alpha
  +(\pi^2-2a\alpha)^2}}{2(\pi^2-8)}.
 \]
\end{proposition}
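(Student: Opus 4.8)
The plan is to establish the two inequalities by independent arguments: the lower bound by feeding a sharp elementary estimate for the tangent into the secular equation~\eqref{eq:1eig-interval}, and the upper bound by a one-parameter variational ansatz.

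For the lower bound, recall that $\lambda:=\keig{1}{\interv{a}}{\alpha}$ lies in $(0,\pi^2/a^2)$ and is the unique root in that range of $\alpha=\sqrt{\lambda}\,\tan\!\bigl(a\sqrt{\lambda}/2\bigr)$, so that $x:=a\sqrt{\lambda}/2\in(0,\pi/2)$. I would apply the upper bound $\tan x\le \pi^2 x/(\pi^2-4x^2)$ from~\eqref{tanbounds}, which gives $\alpha\le \pi^2 a\lambda/\bigl(2(\pi^2-a^2\lambda)\bigr)$; since $\pi^2-a^2\lambda>0$ this rearranges to the linear inequality $2\alpha\pi^2\le a\lambda(\pi^2+2\alpha a)$, i.e.\ $\lambda\ge 2\alpha\pi^2/\bigl(a(\pi^2+2\alpha a)\bigr)$, as claimed. (The companion bound $\tan x\ge 8x/(\pi^2-4x^2)$ yields in the same way the auxiliary estimate $\lambda\le \alpha\pi^2/\bigl(a(4+\alpha a)\bigr)$ recorded in~\eqref{eq:eig1ineq1}, which however has the wrong leading order as $a\to0$ and so is not used for the upper bound here.)

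For the upper bound I would use the variational characterisation of $\keig{1}{\interv{a}}{\alpha}$ with the trial family $u_c(x)=1-c\cos(\pi x/a)$ on $\interv{a}=(-a/2,a/2)$, where $c$ is a free real parameter; note $u_c(\pm a/2)=1$. Direct integration gives $\int_{-a/2}^{a/2}|u_c'|^2=\pi^2c^2/(2a)$, boundary contribution $2\alpha$, and $\int_{-a/2}^{a/2}|u_c|^2=a(1-4c/\pi+c^2/2)=\tfrac{a}{2\pi}(\pi c^2-8c+2\pi)$, whence
\[
\keig{1}{\interv{a}}{\alpha}\le \frac{\pi\bigl(\pi^2c^2+4a\alpha\bigr)}{a^2\bigl(\pi c^2-8c+2\pi\bigr)}=:f(c).
\]
The denominator $\pi c^2-8c+2\pi$ has discriminant $64-8\pi^2<0$ and is therefore positive for all real $c$, so $f$ is a smooth positive function with $f(c)\to\pi^2/a^2$ as $c\to\pm\infty$; it thus attains its global minimum at one of the two roots of $f'(c)=0$. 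Setting $f'=0$ reduces to the quadratic $2\pi^2c^2+(2\pi a\alpha-\pi^3)c-8a\alpha=0$, and one checks that the relevant (global minimising) root is $c=\bigl(\pi^2-2a\alpha-\sqrt{64a\alpha+(\pi^2-2a\alpha)^2}\bigr)/(4\pi)$; substituting this back into $f(c)$ and simplifying produces exactly the stated right-hand side $\frac{\pi^2}{a^2}\cdot\frac{\pi^2+2a\alpha-\sqrt{64a\alpha+(\pi^2-2a\alpha)^2}}{2(\pi^2-8)}$.

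I do not expect any genuine obstacle here; the only care needed is in two bookkeeping points: (i) confirming that the minus-sign root above is indeed the global minimiser of $f$ rather than the maximiser (which follows by comparing the two critical values of the bounded function $f$); and (ii) verifying by Taylor expansion that the resulting upper bound has leading term $2\alpha/a$ as $a\to 0$ and $\pi^2/a^2$ as $\alpha\to\infty$, matching the behaviour of $\keig{1}{\interv{a}}{\alpha}$ (cf.~\eqref{eq:eig1expa} and~\eqref{eq:main-interval-bound}) and of the lower bound, so that the proposition is sharp to leading order in both limits. The algebraic simplification producing the closed form from $f(c)$ is tedious but entirely mechanical.
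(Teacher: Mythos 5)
Your proposal is correct and follows essentially the same route as the paper: the lower bound is obtained by inserting the upper estimate of \eqref{tanbounds} into the secular equation \eqref{eq:1eig-interval} and rearranging, and the upper bound comes from the trial family $1-c\cos(\pi x/a)$ in the Rayleigh quotient, optimised at the same value of $c$. The computations (the quadratic in $c$, its relevant root, and the resulting closed form) all match the paper's.
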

The lower bound is accurate up to the first term in the asymptotics for both the small $a$ and large $\alpha$ cases, while the upper
bound is also accurate up to first order in the small $a$ case and to second order in the large $\alpha$ case.

\subsection{The second eigenvalue on an interval}
In a similar way as above, the second eigenvalue $\keig{2}{\interv{a}}{\alpha}$ is obtained as the
smallest solution of the equation
\begin{equation}\label{}
 -\sqrt{\lambda} = \alpha \tan\left(\fr{a \sqrt{\lambda}}{2}\right),
\end{equation}
which is now on the interval $(\pi^2/a^2,4\pi^2/a^2)$. For convenience, we rewrite this equation as
\begin{equation}
\label{eq:2eig-interval}
 \alpha = -\sqrt{\lambda} \cot\left(\fr{a \sqrt{\lambda}}{2}\right),
\end{equation}
and now expand the cotangent around $\pi/2$ to obtain
\[
 \keig{2}{\interv{a}}{\alpha} = \fr{\pi^2}{a^2}+\fr{4\alpha}{a}-\fr{4\alpha^{2}}{\pi^2}+\fr{4(12-\pi^2)\alpha^{3}}{3\pi^4} a
 - \fr{8(10-\pi^2)\alpha^{4}}{\pi^6}a^2+\bo(a^3),
\]
for small $a$.
Again note that if the resulting expression is plugged back into~\eqref{eq:2eig-interval}, the argument of the cotangent approaches $\pi/2$ as $a$ goes
to zero.

A first obvious remark is that $\keig{1}{\interv{a}}{\alpha}$ and $\keig{2}{\interv{a}}{\alpha}$ display a different asymptotic behaviour as $a$
goes to zero; indeed, $\keig{2}{\interv{a}}{\alpha}$ has the same first term as the second Neumann eigenvalue (or first Dirichlet); while the fact that $\keig{1}{\interv{a}}{\alpha} \sim 2\alpha/a$ is what will drive our estimate on $\keig{k}{\unionsquare}{\alpha}$ in Proposition~\ref{prop:boundkequalsquares} below.

For large $\alpha$ we have
\[
 \keig{2}{\interv{a}}{\alpha} =\frac{4 \pi ^2}{a^2}-\frac{16 \pi ^2}{\alpha  a^3}+\frac{48 \pi ^2}{\alpha ^2 a^4}-\frac{128 \pi ^2}{\alpha ^3 a^5} +\frac{320 \pi ^2}{\alpha ^4 a^6} + \bo(\alpha^{-5}).
\]

We will now proceed as in the case of the first eigenvalue to obtain upper and lower bounds which are sharp. We first go back to
equation~\eqref{eq:1eig-interval} and use the inequality
\[
 \tan x \geq \fr{2}{\pi-2x}
\]
valid for $x$ on $(\pi/2,\pi)$ to obtain
\[
 -\sqrt{\lambda} \geq \fr{-2\alpha}{a \sqrt{\lambda}-\pi}.
\]
Since $\sqrt{\lambda}>\pi/a$, we get $a \lambda -\pi \sqrt{\lambda}-2\alpha \leq 0$, yielding the following bounds 
\[
 \fr{\pi}{2a} -\sqrt{\fr{\pi^2}{4a^2}+\fr{2\alpha}{a}} \leq \sqrt{\lambda} \leq \fr{\pi}{2a} +\sqrt{\fr{\pi^2}{4a^2}+\fr{2\alpha}{a}}.
\]
Of these, clearly only the upper bound is of interest, and, in fact, it satisfies
\begin{equation}\label{eig2uppbound1}
\begin{array}{lll}
 \keig{2}{\interv{a}}{\alpha} & \leq & \left(\fr{\pi}{2a} +\sqrt{\fr{\pi^2}{4a^2}+\fr{2\alpha}{a}}\right)^2\eqskip
 & = & \frac{\pi ^2}{a^2}+\frac{4 \alpha }{a} -\frac{4 \alpha ^2}{\pi^2}+ \bo(a),
\end{array}
\end{equation}
as $a$ approaches zero, thus having the same first three terms in the asymptotics as $\keig{2}{\interv{a}}{\alpha}$.

To obtain a sharp lower bound, and also an upper bound which is better that the above for large values of $a\alpha$, we
now use the identity
\[
 \tan x = \fr{ 1-\cos(2x)}{\sin(2x)}
\]
in~\eqref{eq:1eig-interval}. This yields that the second eigenvalue is given by the smallest positive root of the equation
\[
 -\sqrt{\lambda}\sin(a \sqrt{\lambda}) = \alpha\left[ 1-\cos(a \sqrt{\lambda})\right].
\]
Using the inequalities
\[
 \fr{4}{\pi^2} (x-\pi)(x-2\pi) \leq \sin x \leq \fr{1}{\pi^2} (x-\pi)(x-2\pi)
\]
and
\[
 \fr{2}{\pi^2} (x-2\pi)^2 \leq 1-\cos x \leq \fr{2}{\pi^4} (x-2\pi)^2x^2,
\]
valid on $(\pi,2\pi)$, we are led to
\[
-\fr{\sqrt{\lambda}}{\pi} (a\sqrt{\lambda}-\pi)(a\sqrt{\lambda}-2\pi) \leq
 -\sqrt{\lambda}\sin(a \sqrt{\lambda}) = \alpha\left[ 1-\cos(a \sqrt{\lambda})\right] \leq 
 \fr{2\alpha}{\pi^4}(a \sqrt{\lambda}-2\pi)^2a^2\lambda
\]
and
\[
 \fr{2\alpha}{\pi^2}(a \sqrt{\lambda}-2\pi)^2\leq \alpha\left[ 1-\cos(a \sqrt{\lambda})\right] = -\sqrt{\lambda}\sin(a \sqrt{\lambda})
 \leq -\fr{4\sqrt{\lambda}}{\pi^2} (a\sqrt{\lambda}-\pi)(a\sqrt{\lambda}-2\pi).
\]
In the range under consideration, these are, in turn, equivalent to
\[
 2\alpha a^3\lambda + \pi a(\pi^2-4a \alpha)\sqrt{\lambda} - \pi^4 \leq 0
\]
and
\[
 2a \lambda +(a\alpha -2\pi)\sqrt{\lambda}-2\alpha\pi\geq 0,
\]
respectively. The first of these inequalities yields the upper bound
\begin{equation}\label{eig2uppbound2}
 \sqrt{\keig{2}{\interv{a}}{\alpha}}\leq\fr{\pi}{4 a^2\alpha} \left( 4a \alpha -\pi^2 + \sqrt{\pi^4+16a^2\alpha^2} \right),
\end{equation}
while from the second we obtain
\[
 \fr{2\pi -a\alpha + \sqrt{4\pi^2+12a\alpha \pi + \alpha^2 a^2}}{4a} \leq
 \sqrt{\keig{2}{\interv{a}}{\alpha}}.
\]
Comparing the two upper bounds~\eqref{eig2uppbound1} and~\eqref{eig2uppbound2} we see that 
\[
\begin{array}{ll}
 & \fr{\pi}{4 a^2\alpha} \left( 4a \alpha -\pi^2 + \sqrt{\pi^4+16a^2\alpha^2}\right)-
 \left(\fr{\pi}{2a} +\sqrt{\fr{\pi^2}{4a^2}+\fr{2\alpha}{a}}\right)\eqskip
 = & \fr{\pi}{2a} -\fr{\pi^3}{4a^2\alpha}+\fr{\pi}{4a^2\alpha}\sqrt{\pi^4+16a^2\alpha^2}-\fr{1}{2a}\sqrt{\pi^2+8a\alpha}\eqskip
 = & \fr{\pi}{4a^2\alpha}\left( 2b-\pi^2+\sqrt{\pi^4+16a^2\alpha^2}-2a \alpha \sqrt{1+\fr{8a\alpha}{\pi^2}}\right)\eqskip
 = & \fr{\pi}{4ab}\left[ 2b\left(1-\sqrt{1+\fr{8b}{\pi^2}}\right) - \pi^2\left(1-\sqrt{1+\fr{16b^2}{\pi^4}}\right)\right],
\end{array}
\]
where we have written $b=a\alpha$. Simplifying the expression inside the square brackets we see that it vanishes when either $b=0$
or $b=\pi^2/2$, and that it is negative for $b$ on $(0,\pi^2/2)$ and positive for $b$ larger than $\pi^2/2$.

We thus have, for the second eigenvalue,
\begin{proposition}
 The second eigenvalue of the Robin Laplacian on an interval satisfies
 \[
 \fr{\left(2\pi -a\alpha + \sqrt{4\pi^2+12a\alpha \pi + \alpha^2 a^2}\right)^2}{16a^2} \leq \keig{2}{\interv{a}}{\alpha}
 \]
 and
 \[
  \keig{2}{\interv{a}}{\alpha} \leq
  \left\{
  \begin{array}{ll}
    \left(\fr{\pi}{2a} +\sqrt{\fr{\pi^2}{4a^2}+\fr{2\alpha}{a}}\right)^2, & a\alpha\leq \fr{\pi^2}{2}\eqskip
    \fr{\pi^2}{16 a^4\alpha^2} \left( 4a \alpha -\pi^2 + \sqrt{\pi^4+16a^2\alpha^2}\right)^2, & a\alpha\geq \fr{\pi^2}{2}.
    \end{array}
  \right.
 \]
\end{proposition}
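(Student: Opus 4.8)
The plan is to work directly with the transcendental equation for $\keig{2}{\interv{a}}{\alpha}$. Write $\lambda := \keig{2}{\interv{a}}{\alpha}$; by~\eqref{eq:2eig-interval} it is the smallest positive root of $\alpha = -\sqrt{\lambda}\cot(a\sqrt{\lambda}/2)$, and it lies in $(\pi^2/a^2,4\pi^2/a^2)$, so that $x := a\sqrt{\lambda} \in (\pi,2\pi)$ and $a\sqrt{\lambda}/2 \in (\pi/2,\pi)$. Everything then reduces to sandwiching the relevant trigonometric quantities by rational or polynomial functions on these intervals and solving the resulting quadratic inequalities in $\sqrt{\lambda}$.

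For the first upper bound I would use the elementary inequality $\tan y \geq 2/(\pi-2y)$ on $(\pi/2,\pi)$ with $y = a\sqrt{\lambda}/2$ in~\eqref{eq:2eig-interval}; this rearranges to the quadratic inequality $a\lambda - \pi\sqrt{\lambda} - 2\alpha \leq 0$. Since its left-hand side equals $-2\alpha<0$ at $\sqrt{\lambda}=0$, the variable $\sqrt{\lambda}$ must lie below the unique positive root, which gives $\sqrt{\keig{2}{\interv{a}}{\alpha}} \leq \frac{\pi}{2a}+\sqrt{\frac{\pi^2}{4a^2}+\frac{2\alpha}{a}}$; squaring produces the first branch of the upper estimate.

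For the lower bound and the second upper bound I would rewrite~\eqref{eq:2eig-interval} in the symmetric form $-\sqrt{\lambda}\sin(a\sqrt{\lambda}) = \alpha[1-\cos(a\sqrt{\lambda})]$ and invoke the two-sided bounds $\frac{4}{\pi^2}(x-\pi)(x-2\pi)\leq \sin x \leq \frac1{\pi^2}(x-\pi)(x-2\pi)$ and $\frac{2}{\pi^2}(x-2\pi)^2 \leq 1-\cos x \leq \frac{2}{\pi^4}x^2(x-2\pi)^2$ on $(\pi,2\pi)$, each of which is a one-line calculus check (tangency at $x=3\pi/2$, equality at the endpoints). Substituting the appropriate one-sided halves of these into the identity and cancelling the common positive factor $2\pi - a\sqrt{\lambda}$ (and, in one case, $a\sqrt{\lambda}-\pi$) yields, on one side, $2\alpha a^3\lambda + \pi a(\pi^2-4a\alpha)\sqrt{\lambda} - \pi^4 \leq 0$, whose positive root is precisely~\eqref{eig2uppbound2}, and on the other, $2a\lambda + (a\alpha-2\pi)\sqrt{\lambda} - 2\pi\alpha \geq 0$, whose positive root is the claimed lower bound (again after squaring). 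In each case the direction of the bound on $\sqrt{\lambda}$ is forced by the sign of the quadratic at $\sqrt{\lambda}=0$, which is negative, so there is exactly one positive root and $\sqrt{\lambda}$ must lie on the correct side of it.

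Finally, to report the sharper of the two upper bounds in each regime I would form the difference of the two expressions for $\sqrt{\keig{2}{\interv{a}}{\alpha}}$, substitute $b := a\alpha$, and simplify to the bracketed quantity $2b(1-\sqrt{1+8b/\pi^2}) - \pi^2(1-\sqrt{1+16b^2/\pi^4})$, which vanishes exactly at $b=0$ and $b=\pi^2/2$ and changes sign at the latter; hence~\eqref{eig2uppbound1} is smaller for $a\alpha \leq \pi^2/2$ and~\eqref{eig2uppbound2} for $a\alpha \geq \pi^2/2$. The step most prone to error will be the bookkeeping of inequality orientations: multiplying through by $-\sqrt{\lambda}<0$ reverses the sense, and one must confirm that the factor being cancelled, $(x-\pi)(2\pi-x)$, is genuinely positive throughout the admissible range. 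The sign analysis of the elementary function of $b$ in the final comparison also needs a little care, but neither point is deep.
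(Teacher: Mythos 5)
Your proposal is correct and follows essentially the same route as the paper: the first upper bound via $\tan y \geq 2/(\pi-2y)$ on $(\pi/2,\pi)$, the lower bound and second upper bound via the rewritten identity $-\sqrt{\lambda}\sin(a\sqrt{\lambda})=\alpha[1-\cos(a\sqrt{\lambda})]$ with the stated polynomial sandwiches of $\sin x$ and $1-\cos x$ on $(\pi,2\pi)$, and the final comparison of the two upper bounds by setting $b=a\alpha$ and locating the sign change at $b=\pi^2/2$. No substantive differences from the paper's argument.
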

All these bounds are accurate up to the first term in the asymptotics as either $a$ becomes small or $\alpha$ large, except for the
upper bound which is accurate up to the third term in the asymptotics as $a$ approaches zero.

\subsection{Bounds for the eigenvalues of rectangles}

The estimates obtained above may now be used to derive bounds for eigenvalues of rectangles. 
The first eigenvalue of a rectangle with side lengths $A^{1/2}a$ and area $A^{1/2}/a$ a particular case of~\eqref{eq:separation-of-variables}
and is given by
\begin{displaymath}
 \keig{1}{\rect{a}{A}}{\alpha} = \keig{1}{\interv{A^{1/2}a}}{\alpha} + \keig{1}{\interv{A^{1/2}/a}}{\alpha}.
\end{displaymath}
It is thus possible to bound this from above and below by means of the bounds from the previous sections, with the same being
possible for the second eigenvalue of rectangles. The expressions do get quinte involved though, and we will concentrate 
on one of the cases which is relevant throughout the paper, namely, the $k^{\rm th}$ eigenvalue of the disjoint union of $k$
equal squares $\unionsquare{k}$ (assumed here to have total area $A$), which coincides with the first eigenvalue of each of the squares. For
a total area $A$, we are thus interested in
\[
 \keig{k}{ \unionsquare{k} }{\alpha} = \keig{1}{\sq{(A/k)^{1/2}}}{\alpha}
 = 2 \keig{1}{\interv{(A/k)^{1/2}}}{\alpha}.
\]
The corresponding bounds obtained directly from Proposition~\ref{prop:firsteiginterv} are as follows.
\begin{proposition}\label{prop:boundkequalsquares}
 The $k^{\rm th}$ eigenvalue of the union of $k$ equal squares with total area $A$ satisfies
\[
\begin{array}{lll}
 \fr{4\alpha\pi^2 k}{A^{1/2}\left(\pi^2 k^{1/2}+2\alpha A^{1/2}\right)} & \leq & \keig{k}{ \unionsquare{k} }{\alpha}\eqskip
 & & 
 \leq \fr{\pi^2k^{1/2}}{(\pi^2-8)A}\times\left[ \pi^2k^{1/2}+2\alpha A^{1/2} \right. \eqskip
 & & \hspace*{5mm}\left.-\sqrt{64\alpha k^{1/2}A^{1/2}+(\pi^2k^{1/2}-2\alpha A^{1/2})^2}\right].
\end{array}
\]
\end{proposition}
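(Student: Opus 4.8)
The plan is to reduce the whole statement to the one-dimensional estimates of Proposition~\ref{prop:firsteiginterv} via separation of variables. The first step is the spectral decomposition of a disjoint union: since $\unionsquare{k}$ consists of $k$ congruent copies of $\sq{(A/k)^{1/2}}$, its spectrum is the union, with multiplicities, of $k$ copies of the spectrum of $\sq{(A/k)^{1/2}}$, so the value $\keig{1}{\sq{(A/k)^{1/2}}}{\alpha}$ is an eigenvalue of $\unionsquare{k}$ of multiplicity at least $k$; as each square is connected its first eigenvalue is simple, so its second eigenvalue is strictly larger, and hence
\[
 \keig{k}{\unionsquare{k}}{\alpha} = \keig{1}{\sq{(A/k)^{1/2}}}{\alpha}.
\]
Next, applying the separation-of-variables identity~\eqref{eq:separation-of-variables} to the square $\rect{1}{A/k}$ (the $(1,1)$ mode of Definition~\ref{def:i-j-mode}) gives $\keig{1}{\sq{(A/k)^{1/2}}}{\alpha} = 2\keig{1}{\interv{(A/k)^{1/2}}}{\alpha}$.

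I would then simply insert $a = (A/k)^{1/2}$ into Proposition~\ref{prop:firsteiginterv} and multiply by $2$. For the lower bound this gives
\[
 \keig{k}{\unionsquare{k}}{\alpha} \geq \fr{4\alpha\pi^2}{(A/k)^{1/2}\left(\pi^2 + 2\alpha(A/k)^{1/2}\right)},
\]
and multiplying numerator and denominator by $k$ and using $(A/k)^{1/2} = A^{1/2}k^{-1/2}$ turns the right-hand side into $4\alpha\pi^2 k\big/\big(A^{1/2}(\pi^2 k^{1/2} + 2\alpha A^{1/2})\big)$, which is the claimed lower bound. For the upper bound, with $a = A^{1/2}k^{-1/2}$ one has $a^{-2}=k/A$, $\pi^2 + 2a\alpha = k^{-1/2}(\pi^2 k^{1/2} + 2\alpha A^{1/2})$, and $64a\alpha + (\pi^2 - 2a\alpha)^2 = k^{-1}\big(64\alpha A^{1/2}k^{1/2} + (\pi^2 k^{1/2} - 2\alpha A^{1/2})^2\big)$, so a factor $k^{-1/2}$ may be pulled out of both $\pi^2 + 2a\alpha$ and the radical in the numerator of the bound in Proposition~\ref{prop:firsteiginterv}. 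Combining this with the prefactor $\pi^2 a^{-2} = \pi^2 k/A$ and the factor $2$ yields exactly
\[
 \keig{k}{\unionsquare{k}}{\alpha} \leq \fr{\pi^2 k^{1/2}}{(\pi^2-8)A}\left[\pi^2 k^{1/2} + 2\alpha A^{1/2} - \sqrt{64\alpha k^{1/2}A^{1/2} + (\pi^2 k^{1/2} - 2\alpha A^{1/2})^2}\right].
\]

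There is essentially no real obstacle in this argument: all the content is already in Proposition~\ref{prop:firsteiginterv} together with the elementary observation that the first $k$ eigenvalues of $\unionsquare{k}$ coincide, and what remains is the bookkeeping of the rescaling $a \mapsto (A/k)^{1/2}$. The only points that merit a second look are that pulling $k^{-1/2}$ out of the square root is legitimate (it is, since $k>0$, and it does not affect the sign in front of the radical), and that the upper bound of Proposition~\ref{prop:firsteiginterv} is valid for all admissible $a,\alpha$ without a case distinction, so that --- unlike for the second eigenvalue of an interval --- no splitting according to the size of $a\alpha$ is needed here.
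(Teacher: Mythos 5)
Your proposal is correct and follows exactly the route the paper takes (the paper in fact only sketches it, noting that $\keig{k}{\unionsquare{k}}{\alpha} = \keig{1}{\sq{(A/k)^{1/2}}}{\alpha} = 2\keig{1}{\interv{(A/k)^{1/2}}}{\alpha}$ and that the bounds then follow directly from Proposition~\ref{prop:firsteiginterv}). Your algebraic verification of the substitution $a=(A/k)^{1/2}$, including pulling the factor $k^{-1/2}$ out of the radical, is accurate.
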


\providecommand{\bysame}{\leavevmode\hbox to3em{\hrulefill}\thinspace}
\providecommand{\href}[2]{#2}

\end{document}